\setlist[enumerate]{itemsep=0mm}
\let\oldaddcontentsline\addcontentsline
\newcommand{\starttocentries}{\let\addcontentsline\oldaddcontentsline}
\newcommand{\id}{{\rm id}}
\newcommand{\N}{\mathbb N}
\newcommand{\Z}{\mathbb Z}
\newcommand{\C}{\mathbb C}
\newcommand{\suppe}{{\rm supp}}
\newcommand{\suppg}{{\it supp}}
\newcommand{\fpc}{{\rm fpc}}
\newcommand{\fix}{{\rm Fix}}
\newcommand{\ie}{\text{i.e.\;}}
\newcommand{\aut}{{\rm Aut}}
\newcommand{\fin}{{\rm fin}}
\newcommand{\sign}{\mathrm{sign}}
\newcommand{\Part}{\mathrm{Part}}
\newcommand{\Lin}{\mathrm{Lin}}
\newcommand{\Aut}{\textup{Aut}}
\theoremstyle{plain}
\newtheorem{theorem}{Theorem}[section]
\newtheorem{corollary}[theorem]{Corollary}
\newtheorem{lemma}[theorem]{Lemma}
\newtheorem{proposition}[theorem]{Proposition}
\theoremstyle{definition}
\newtheorem{question}[theorem]{Question}
\newtheorem{definition}[theorem]{Definition}
\newtheorem{thmx}{Theorem}
\theoremstyle{remark}
\newtheorem{remark}[theorem]{Remark}
\newtheorem{example}[theorem]{Example}
\numberwithin{equation}{section}
\begin{document}


\title[ISR for semidirect products]{Invariant subalgebras rigidity for von Neumann algebras of groups arising as certain semidirect products}

\author{Tattwamasi Amrutam}
\address{Institute of Mathematics of the Polish Academy of Sciences, ul.~\'Sniadeckich 8, 00--656 Warszawa, Poland}
\email{tattwamasiamrutam@impan.pl}

\author{Artem Dudko}
\address{Institute of Mathematics of the Polish Academy of Sciences, ul.~\'Sniadeckich 8, 00--656 Warszawa, Poland}
\email{adudko@impan.pl}

\author{Yongle Jiang}
\address{School of Mathematical Sciences, Dalian University of Technology, Dalian, 116024, China}
\email{yonglejiang@dlut.edu.cn}

\author{Adam Skalski}
\address{Institute of Mathematics of the Polish Academy of Sciences, ul.~\'Sniadeckich 8, 00--656 Warszawa, Poland}
\email{a.skalski@impan.pl}

\keywords{von Neumann algebra; ISR property; semi-direct products; 2-torsion groups; lamplighter group}
\subjclass[2010]{Primary: 46L10; Secondary 22D25}

\begin{abstract}
We study the ISR (von Neumann invariant subalgebra rigidity) property for certain discrete groups arising as semidirect products from algebraic actions on certain 2-torsion groups, mostly arising as direct products of $\Z_2$.  We present, in particular, the first example of an amenable group with the ISR property that admits a non-trivial abelian normal subgroup. Several other examples are discussed, notably including an infinite amenable group whose von Neumann algebra admits precisely one invariant von Neumann subalgebra which does not come from a normal subgroup. We also investigate the form of invariant subalgebras of the group von Neumann algebra of the standard lamplighter group.
\end{abstract}

\maketitle

\section{Introduction}

The study of von Neumann algebras of discrete groups dates back to the very origins of operator algebra theory, as these  appeared already in the foundational work of Murray and von Neumann. It was understood even then that the group von Neumann algebra \( L(G) \) of a discrete group \( G \) is a factor---i.e., it contains no nontrivial weakly closed ideals---if and only if \( G \) is ICC (i.e., all nontrivial conjugacy classes are infinite). A von Neumann ideal in \( L(G) \) is automatically an \emph{invariant} subalgebra, in the sense that it is preserved under the natural adjoint action of \( G \). In recent years, beginning with the work of Kalantar and Panagopoulos~\cite{kalantar2022invariant} and its predecessor due to Alekseev and Brugger~\cite{alekseev2019rigidity}, there has been renewed interest in the study of general invariant subalgebras. Notably, maximal abelian invariant subalgebras of \( L(G) \) serve as prototypical examples of \emph{Cartan subalgebras}, which have long played a central role in the theory of von Neumann algebras (see, e.g.,~\cites{feldman1977ergodic,popa1983maximal}).

It is easy to see that every normal subgroup \( N \trianglelefteq G \) yields an invariant subalgebra \( L(N) \subseteq L(G) \); these should be viewed as the \say{obvious} ones. In \cite{kalantar2022invariant}, Kalantar and Panagopoulos proved that if \( G \) is a lattice in a higher-rank Lie group, then all invariant subalgebras of \( L(G) \) arise from normal subgroups. This was taken up in \cite{amrutam2023invariant}, where the authors formally introduced the \emph{ISR} (invariant subalgebra rigidity) property for a discrete countable group \( G \), understood as the non-existence of \say{non-obvious} (or \say{exotic}) invariant subalgebras of \( L(G) \). They also showed that noncommutative free groups, and more generally certain \say{negatively curved} groups and their finite direct products, have the ISR property. The results of \cite{amrutam2023invariant} were later partially generalized in \cite{chifan2022invariant}.

It is not difficult to see that an infinite group with the ISR property must be ICC (\cite[Proposition~3.1]{amrutam2023invariant}). However, in contrast to factoriality, it does not appear to be easy to characterize the ISR property in simple group-theoretic terms. The examples given in \cite{kalantar2022invariant}, \cite{amrutam2023invariant}, and \cite{chifan2022invariant} mentioned above were all non-amenable; in fact, the first example of an infinite amenable group with the ISR property—namely \( S_\infty \)—was pointed out in \cite{jz}. The proof of the main result in \cite{jz} relied on detailed knowledge of the indecomposable characters of \( S_\infty \). After that, the so-called character approach was applied in \cite{dudko2024character} to prove, among other things, the ISR property for a broad class of approximately finite groups arising as symmetry groups of simple Bratteli diagrams. 

In fact, the only apparent obstruction to the ISR property for amenable groups, noted in \cite[Example~3.5]{amrutam2023invariant}, has been the existence of non-trivial abelian normal subgroups. The construction in \cite[Example~3.5]{amrutam2023invariant} shows that if \( G \) contains a normal abelian subgroup with elements of order at least 3, then \( G \) cannot have the ISR property. This observation serves as the motivation for the study undertaken in this paper, where we construct the first example of an amenable group \( G \) with a non-trivial normal abelian subgroup such that \( G \) possesses the ISR property. By the above discussion, the abelian part of such a group must consist entirely of elements of order $2$—that is, it must be a $2$-torsion group. It is then natural to consider wreath-type actions on infinite direct products of 2-torsion groups. A modification of this idea leads to the following result; we refer to the beginning of Section \ref{sec:ISR} for the detailed definition of the group appearing in the statement.

\begin{thmx} \label{thm:A}
 The group $S(2^\infty) \ltimes \tilde{C}(X; \mathbb Z_2)$ (which is amenable, ICC, and admits a non-trivial abelian normal subgroup) has the ISR property.
	\end{thmx}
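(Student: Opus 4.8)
The plan is to fix a $G$-invariant von Neumann subalgebra $M\subseteq L(G)$ and prove it equals $L(H)$ for some normal subgroup $H\trianglelefteq G$; since $G$ is ICC this reduces to the dichotomy that for every $g\in G$ one has either $\lambda_g\in M$ or $E_M(\lambda_g)=0$, where $E_M\colon L(G)\to M$ is the trace-preserving conditional expectation, which is automatically $G$-equivariant because $M$ is invariant. (Granting the dichotomy, $H:=\{g:\lambda_g\in M\}$ is normal by invariance and $M=L(H)$ by Fourier analysis.) Throughout, write $A=\tilde C(X;\mathbb Z_2)$ for the abelian normal $2$-subgroup and $Q=S(2^\infty)\cong G/A$, and identify $L(A)$ with $L^\infty(\widehat A)$, where $\widehat A$ is a compact group of exponent $2$ carrying the $Q$-action dual to the algebraic action defining the semidirect product.

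\emph{Step 1: the abelian core.} I would first pin down $M_A:=M\cap L(A)$. Being a $Q$-invariant von Neumann subalgebra of $L^\infty(\widehat A)$, it corresponds to a $Q$-invariant sub-$\sigma$-algebra; exploiting the explicit form of the algebraic action together with the combinatorial richness of $Q$ acting on $\widehat A$ — in the spirit of the Hewitt--Savage/de Finetti phenomenon for permutation actions on product spaces — one shows that the only such invariant subalgebras are $L(B)$ for $B$ in a short, explicit list of $Q$-invariant (hence $G$-normal) subgroups of $A$. Thus $M_A=L(B_0)$ for some $B_0\trianglelefteq G$ contained in $A$.

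\emph{Step 2: the Cartan case $M\supseteq L(A)$.} Since $L(A)$ is a regular maximal abelian subalgebra of $L(G)=L^\infty(\widehat A)\rtimes Q$, I would use a Galois-type correspondence for the Cartan pair (relying on freeness and ergodicity of $Q\curvearrowright\widehat A$) to show that an invariant $M$ containing $L(A)$ has the form $L^\infty(\widehat A)\rtimes\Lambda$ for a subgroup $\Lambda\le Q$, and that $G$-invariance forces the associated data to come from a normal subgroup of $Q$. Invoking the ISR property of $Q=S(2^\infty)$ itself — which, like that of $S_\infty$, is accessible via the character-theoretic approach of \cite{jz,dudko2024character} — yields $\Lambda\trianglelefteq Q$, so $M=L(A\rtimes\Lambda)$ with $A\rtimes\Lambda\trianglelefteq G$.

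\emph{Step 3: the transverse case $M_A=L(B_0)$ with $B_0\subsetneq A$ (the main obstacle).} Here one must control the cross terms: take $g=sa$ with $s\in Q$, $s\neq e$, and suppose $E_M(\lambda_g)\neq0$; one must deduce $\lambda_g\in M$. The device is equivariance of $E_M$ applied to averages of the conjugates $\lambda_{h}\lambda_g\lambda_{h}^{*}=\lambda_{s}\,\lambda_{(\mathrm{id}-s^{-1})(h)+a}$ over $h\in A$: because every element of $A$ has order $2$, the characters of $A$ involved take values in $\{\pm1\}$, so these averages converge to honest orthogonal projections $\lambda_g p_s$ (with $p_s\in L^\infty(\widehat A)$ the projection onto the annihilator of $\mathrm{Im}(\mathrm{id}-s^{-1})$) rather than producing the obstruction that arises for normal abelian subgroups with elements of order $\ge 3$ (cf.\ \cite[Example~3.5]{amrutam2023invariant}). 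One then plays the affine combinatorics of $Q\curvearrowright A$ against the constraint $p_s\in M_A=L(B_0)$ to conclude that $E_M(\lambda_g)$ lies in a finite-dimensional space and equals $\lambda_g z$ for a projection $z\in L(B_0)$ forced to be $0$ or $1$, hence $\lambda_g\in M$; the remaining case $B_0=A$ is Step 2. Combining the three steps establishes the dichotomy and hence the theorem. The hard part is precisely this last step, where the affine action of $S(2^\infty)$ on $A$ and the character rigidity of $S(2^\infty)$ must be used simultaneously rather than layer by layer, and where the $2$-torsion hypothesis on $A$ is indispensable.
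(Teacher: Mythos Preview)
Your Step 2 contains a genuine gap that breaks the argument. The ``Galois-type correspondence for the Cartan pair'' you invoke --- that a $G$-invariant $M\supseteq L(A)$ must be of the form $L^\infty(\widehat A)\rtimes\Lambda$ for some subgroup $\Lambda\le Q$ --- is \emph{false} in this generality, and the paper itself furnishes the counterexamples. For the closely related group $\Aut_{\fin}(\Z_2^\infty)\ltimes\Z_2^\infty$ (Theorem~\ref{thm:B}) there is an invariant subalgebra $\mathcal M_{exo}$ strictly containing $L(\Z_2^\infty)$ that is \emph{not} a crossed product by any subgroup; Section~\ref{sec:wreath} gives further such examples for $S_\infty\ltimes\Z_2^\infty$. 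So no Cartan-pair machinery, nor the ISR property of $Q$, can by itself force $M=L(A)\rtimes\Lambda$: whatever rules out exotic subalgebras here must use features specific to $S(2^\infty)\ltimes\widetilde C(X;\Z_2)$.

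The paper's proof accordingly proceeds very differently. It first computes $\fpc(s)=\{e,s,\widetilde f_{\suppg(s)},s\widetilde f_{\suppg(s)}\}$ for involutions $s\in S(2^\infty)$ (Proposition~\ref{prop: fpc of involution in S(2^infty)}), which via Lemma~\ref{lemma: fpc application} and Proposition~\ref{prop: E(S) subset S} forces $E(s)\in\{s,\tfrac12 s(\id\pm\widetilde f_{\suppg(s)})\}\cup\C s\widetilde f_{\suppg(s)}$. The dangerous case $E(s)=\tfrac12 s(\id\pm\widetilde f_{\suppg(s)})$ is precisely the ``non-trivial projection $z$'' situation your Step~3 claims to exclude, but here it occurs with $M\supseteq L(A)$, so your dichotomy $M_A\subsetneq L(A)$ versus $M\supseteq L(A)$ does not isolate it. The paper eliminates it (Case~3 of Theorem~\ref{th: S(2^infty) and ISR}) by an explicit commutator computation: choosing specific involutions $s\in S(2^3)$ and $g\in S(2^2)$ with overlapping supports and checking by hand that $s^{-1}E(s)\in\mathcal M'$ cannot commute with $E(g)$. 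This is where the particular combinatorics of $S(2^\infty)$ acting on $\widetilde C(X;\Z_2)$ enters, and your sketch supplies nothing in its place.
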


In view of the above, it is natural to ask what happens for semidirect products involving actions on $\Z_2^\infty:= \bigoplus_{\N} \Z_2$. It turns out that we need not obtain the ISR property, but may witness a different interesting behaviour, as exemplified by the next statement, where $\Aut_{\textup{fin}}(\Z_2^\infty)$ denotes the group of all \emph{finitary} automorphisms of $(\Z_2^\infty)$.

\begin{thmx} \label{thm:B}
	The von Neumann algebra of the group $\Aut_{\textup{fin}}(\Z_2^\infty) \ltimes \Z_2^\infty$  admits a unique invariant subalgebra which does not come from a normal subgroup.
\end{thmx}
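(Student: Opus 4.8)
\emph{Setup and the candidate.} Write $H:=\Aut_{\fin}(\Z_2^\infty)$, $A:=\Z_2^\infty=\bigoplus_{\N}\Z_2$ and $G:=H\ltimes A$, identify $H$ with the locally finite \emph{simple} group $\varinjlim_n\mathrm{GL}_n(\F_2)$ (direct limit of the simple groups $\mathrm{GL}_n(\F_2)$, $n\geq 3$, along the block embeddings), and, via Pontryagin duality, identify $L(A)$ with $L^\infty(\widehat A)$, $\widehat A=\prod_{\N}\Z_2$, so that $L(G)=L^\infty(\widehat A)\rtimes H$ with $H$ acting on $\widehat A$ dually to its linear action on $A$. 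The plan is to show that the invariant subalgebras of $L(G)$ are exactly $\C 1,\ L(A),\ L(G)$ and $B_0:=L(A)'\cap L(G)$. The first three are the obvious ones, coming from the normal subgroups of $G$, which are precisely $\{1\},A,G$ (since $H$ is simple, the only $H$-invariant subgroups of $A$ are $0$ and $A$, and $A$ has no normal complement in $G$). The fourth, $B_0$, is $G$-invariant because $A\trianglelefteq G$ makes both $L(A)$ and $L(G)$ invariant; it strictly contains $L(A)$ because $H\curvearrowright\widehat A$ is not free --- for $\phi\neq 1$ the set $\fix(\phi)=(\mathrm{im}(\mathrm{id}-\phi))^{\perp}$ has positive Haar measure and $\mathbf 1_{\fix(\phi)}u_\phi\in B_0\setminus L(A)$ --- and it is strictly contained in $L(G)$ because $G$ is ICC, so $L(G)$ is a factor and $L(A)\not\subseteq Z(L(G))$; finally $B_0$ comes from no normal subgroup, since $B_0=L(N)$ would force $A\subseteq N$, hence $N\in\{A,G\}$ and $B_0\in\{L(A),L(G)\}$. (Concretely, $B_0$ is the von Neumann algebra of the isotropy group bundle $\chi\mapsto\st_H(\chi)$ of $H\curvearrowright\widehat A$, i.e.\ $\int^{\oplus}_{\widehat A}L(\st_H\chi)\,d\mu$.)

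\emph{The key dynamical lemma.} The technical engine I would establish first is: the action $H\curvearrowright L(A)=L^\infty(\widehat A)$ admits no invariant von Neumann subalgebra other than $\C 1$ and $L(A)$. One works in the Fourier basis $\{w_\alpha\}_{\alpha\in A}$ of $L^\infty(\widehat A)$, on which $H$ acts by a finitary linear action, fixing $w_0=1$ and acting \emph{$2$-transitively} on $\{w_\alpha:\alpha\neq 0\}$ (over $\F_2$ two distinct non-zero vectors span a $2$-dimensional space). For an invariant $B$ with conditional expectation $E_B$, the vectors $\xi_\alpha:=E_B(w_\alpha)$ satisfy $\langle\xi_\alpha,\xi_\beta\rangle=t$ for $\alpha=\beta\neq 0$ and $=s$ for distinct non-zero $\alpha,\beta$, with $s\geq 0$ by positivity of the Gram matrix; applying $E_B$ to $S_n:=\sum_{0\neq\alpha\in\F_2^n}w_\alpha$, which has $\|S_n\|_2^2=2^n-1$, gives $(2^n-1)t+(2^n-1)(2^n-2)s\leq 2^n-1$, and letting $n\to\infty$ forces $s=0$; but then $\xi_\alpha=\sum_\gamma\langle\xi_\alpha,w_\gamma\rangle w_\gamma=t\,w_\alpha$, whence $t=\|\xi_\alpha\|_2^2=t^2$, so $t\in\{0,1\}$ and $B\in\{\C 1,L(A)\}$. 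The same mechanism --- rigidity forced by $2$-transitivity of a finitary linear group on a Fourier basis --- will recur below, and I expect the passage from transitivity to rigidity to be its most delicate point.

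\emph{General invariant $B$ and the case split.} Now let $B\subseteq L(G)$ be an arbitrary invariant subalgebra. Both $B\cap L(A)$ and the von Neumann algebra generated by $E_{L(A)}(B)$ are $H$-invariant subalgebras of $L(A)$, hence each is $\C 1$ or $L(A)$ by the lemma. If $L(A)\subseteq B$, write $L(G)=\bigoplus_{\phi\in H}L^\infty(\widehat A)u_\phi$: as an $L(A)$-bimodule, $B$ is encoded by measurable sets $(U_\phi)_{\phi\in H}$ with $U_1=\widehat A$, $U_{\phi^{-1}}=\phi^{-1}(U_\phi)$, $U_\phi\cap\phi(U_\psi)\subseteq U_{\phi\psi}$, and $U_{\psi\phi\psi^{-1}}=\psi(U_\phi)$ from $G$-invariance. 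If $U_\phi=\widehat A$ for some $\phi\neq 1$ then, the conjugacy class of $\phi$ generating $H$, one gets $B=L(G)$; otherwise one shows $U_\phi\subseteq\fix(\phi)$ for every $\phi$. The key here is that $B$ is an \emph{algebra}: a non-trivial $U_\phi$ is first enlarged to all of $\fix(\phi)$ using ergodicity of $C_H(\phi)\curvearrowright\fix(\phi)$ --- once more a ``finitary $\mathrm{GL}$ on $\prod\Z_2$'' ergodicity statement, reducible to the lemma --- and then products and suprema of the $\mathbf 1_{\fix(\phi_i)}u_{\phi_i}$ recover $B_0$; since $L(A)\subsetneq B$ in this situation, $B=B_0$.

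\emph{The hard case.} The remaining case, $B\cap L(A)=\C 1$, is where I expect the real work. I would use the normal subgroup $N:=\ker\!\big(\mathrm{Ad}\colon G\to\Aut(B)\big)\in\{1,A,G\}$. If $N=G$ then $B\subseteq Z(L(G))=\C 1$. If $N=A$, every $u_a$ ($a\in A$) commutes with $B$, so $B\subseteq L(A)'\cap L(G)=B_0$; as $A$ then acts trivially on $B_0$, $B$ is an $H$-invariant subalgebra of the bundle of factors $B_0=\int^{\oplus}_{\widehat A}L(\st_H\chi)\,d\mu$ with $B\cap Z(B_0)=B\cap L(A)=\C 1$, and one argues $B=\C 1$ by pushing ergodicity of $H\curvearrowright\widehat A$ through the bundle (via the centre-valued trace of $B_0$). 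If $N=1$, then $\mathrm{Ad}|_A$ is faithful on $B$, equivalently $B\not\subseteq B_0$; here one must show that no $\mathrm{Ad}(u_g)$ with $g\neq e$ admits a non-zero twisted intertwiner in $B$ --- i.e.\ that $E_B(u_g)=0$ for all $g\neq e$, which forces $B=\C 1$ --- exploiting the identity $E_B(u_g)\,b=\mathrm{Ad}(u_g)(b)\,E_B(u_g)$ for $b\in B$ together with $B\cap L(A)=\C 1$ and the inductive-limit structure $G=\varinjlim G_n$, $G_n=\mathrm{GL}_n(\F_2)\ltimes\F_2^n$ (for instance by approximating $B$ by $E_{L(G_n)}(B)$ and analysing the finite-dimensional conjugation-invariant picture level by level, or by invoking the classification of the characters of $G$). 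Ruling out such ``transverse'' $B$ --- invariant, with $\C 1\subsetneq B$, $B\not\subseteq B_0$ and $B\cap L(A)=\C 1$ --- is the main obstacle; every other branch funnels back to the dynamical lemma and its variants.
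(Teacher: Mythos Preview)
Your identification of the exotic subalgebra as $B_0=L(A)'\cap L(G)$ is correct and conceptually clean: with $f_g=\tfrac{1}{\#R(g-I)}\sum_{v\in R(g-I)}u_v=\mathbf 1_{R(g-I)^\perp}=\mathbf 1_{\fix(g)}$ (using that $R(g-I)$ is $g$-invariant, so $R(g-I)=R(g^{-1}-I)$), your $B_0$ is exactly the paper's $\mathcal M_{exo}$, generated by $L(A)$ and the $u_gf_g$. Your dynamical lemma and its proof via $2$-transitivity are also correct and neater than the paper's weak-limit argument for the same fact.

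However, the two decisive steps are not actually proven. In the case $L(A)\subseteq B\subsetneq L(G)$ you assert $U_\phi\subseteq\fix(\phi)$, but nothing in your list of constraints ($U_e=\widehat A$, $U_{\phi^{-1}}=\phi^{-1}U_\phi$, $U_\phi\cap\phi U_\psi\subseteq U_{\phi\psi}$, $U_{h\phi h^{-1}}=hU_\phi$) forces this: $A$-conjugation gives no restriction once $L(A)\subseteq B$, and $C_H(\phi)$-invariance of $U_\phi$ by itself does not confine $U_\phi$ to $\fix(\phi)$ (ergodicity of $C_H(\phi)\curvearrowright\fix(\phi)$ could only enlarge $U_\phi\cap\fix(\phi)$, not bound $U_\phi$ from above). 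In the ``hard case'' $B\cap L(A)=\C$ with $N=\{e\}$ you correctly diagnose the difficulty but do not resolve it; the $N=A$ subcase is likewise only sketched.

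The paper closes precisely these gaps by a different mechanism. It shows directly, by playing $s^{-1}E(s)$ against $\omega E(s)\omega^{-1}$ for $\omega=(123)$, that $E(s)=s\cdot A$ with $A\in\{0,1,\mathbf 1_{\fix(s)}\}$; this single computation replaces your whole $U_\phi$ analysis and your $N$-trichotomy at once. The case $A=0$ is then handled using the classification of characters on $GL(\infty,F_2)\ltimes F_2^\infty$ (forcing $E\equiv 0$ off $L(A)$), and the case $A=\mathbf 1_{\fix(s)}$ is pushed to $E(g)=g f_g$ for all $g$ by combining Skudlarek's characters on $GL(\infty,F_2)$ with a factorization lemma writing each $g$ as a product of conjugates of $s$ whose $(s_i-I)$-ranges sum to $R(g-I)$. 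These character-theoretic inputs are what your bimodule/kernel framework is missing; without them the classification does not go through.
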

We would like to mention that another case of a group $G$ without the ISR property where all the invariant subalgebras of $L(G)$ can be determined, namely $G=SL_2(\Z) \ltimes \Z^2$, was treated recently in \cite{JiangLiu}.

As noted in \cite[Example~3.5]{amrutam2023invariant}, the lamplighter group $\Z_n\wr\Z$, with $n\ge 3$, does not possess the ISR property, a failure rooted in the presence of order $n$ elements in its normal abelian subgroup. This raises the natural question of whether a complete classification of invariant von Neumann subalgebras is possible when the abelian part consists only of order 2 elements. In this paper, we address this,  providing detailed information on invariant subalgebras of the group von Neumann algebra of the standard lamplighter group and showing that the latter does not enjoy the ISR property. Relying on classical ergodic-theoretic tools, including relatively weakly mixing factors of Bernoulli shifts, we establish the following theorem.
\begin{thmx} \label{thm:C}
Let $G=\mathbb{Z}_2\wr\mathbb{Z}=(\oplus_{\mathbb{Z}}\mathbb{Z}_2)\rtimes \mathbb{Z}$ be the classical lamplighter group. Let $A=\oplus_{\mathbb{Z}}\mathbb{Z}_2$.
Let $\mathcal{P}\subseteq L(G)$ be a $G$-invariant von Neumann subalgebra. Then the following hold true:
\begin{itemize}
\item[(i)] there exists a normal subgroup $N\lhd G$ such that $\mathcal{P}=(\mathcal{P}\cap L(A))\vee L(N)$, i.e.\ $\mathcal{P}$ is generated as a von Neumann algebra by $P\cap L(A)$ and $L(N)$; 
\item[(ii)] $\mathcal{P}\cap L(\mathbb{Z})=L(k\mathbb{Z})$ for some $k\in\mathbb{Z}$;
\item[(iii)] if the factor map $\pi: (\widehat{A},\text{Haar})\rightarrow (Y,\nu)$ is relatively weakly mixing, where $L^{\infty}(Y,\nu)\cong \mathcal{P}\cap L(A)$, then $\mathcal{P}=L^{\infty}(Y,\nu)\rtimes k\mathbb{Z}$;
\item[(iv)] $G$ does not have the ISR property.
\end{itemize}
	\end{thmx}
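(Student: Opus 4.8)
The plan is to model $L(G)$ as the Bernoulli crossed product $L^\infty(\widehat A,\mathrm{Haar})\rtimes_\sigma\Z$, with $\widehat A=\{0,1\}^\Z$, $\sigma$ the (two-sided) Bernoulli shift, and $L(A)=L^\infty(\widehat A)$; write $u$ for the canonical unitary implementing $\sigma$, so that conjugation by $u$ acts on $L(A)$ as $\sigma$, while conjugation by the group unitary $u_b$ of $b\in A$ fixes $L(A)$ pointwise and sends $u^n\mapsto\chi_{(1+x^n)b}u^n$, where we identify $A=\F_2[x,x^{-1}]$ and write $\chi_c\in L^\infty(\widehat A)$ for the character dual to $c\in A$. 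Two features of this crossed product are used throughout: the shift is \emph{free}, so $L(A)$ is a MASA and $L(A)'\cap L(G)=L(A)$; and it is \emph{mixing}. \textbf{Step 1 (push-downs).} Averaging the inner $A$-action: for $y\in\mathcal P$ the weak-$*$ closed convex hull of $\{u_b\,y\,u_b^*:b\in A\}$ is weak-$*$ compact and $A$-invariant, hence meets the fixed-point algebra $L(A)'\cap L(G)=L(A)$; by uniqueness of the trace-preserving conditional expectation this gives $E_{L(A)}(\mathcal P)\subseteq\mathcal P$, so $\mathcal Q:=\mathcal P\cap L(A)=E_{L(A)}(\mathcal P)$ is a $\sigma$-invariant von Neumann subalgebra of $L^\infty(\widehat A)$, i.e. $\mathcal Q=L^\infty(Y,\nu)$ for a factor $\pi\colon(\widehat A,\mathrm{Haar})\to(Y,\nu)$ of the Bernoulli system. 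Similarly, for $x=\sum_nx_nu^n\in\mathcal P$ mixing gives $u^mxu^{-m}=\sum_n\sigma^m(x_n)u^n\to\sum_n\tau(x_n)u^n$ weakly as $m\to\infty$, so $E_{L(\Z)}(\mathcal P)\subseteq\mathcal P$ as well.

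\textbf{Step 2 (proof of (i) and (ii) --- the core).} Put $A_Y:=\{c\in A:\chi_c\in\mathcal Q\}$, a shift-invariant subgroup of $A$, so $A_Y\trianglelefteq G$ and $L(A_Y)\subseteq\mathcal Q$. For $x=\sum_nx_nu^n\in\mathcal P$ and $n\neq0$, the inner $A$-action twists $x_n$ by the characters $\chi_{(1+x^n)b}$, which run over a subgroup $(1+x^n)A$ of $A$ whose annihilator in $\widehat A$ is the \emph{finite} set of $n$-periodic sequences; so this twisting has ``continuous spectrum'', and feeding it through the averaging and conditional-expectation machinery of Step 1 forces rigidity on the $x_n$: one shows (a) $|x_n|^2$ and the transfer products $x_n\overline{\sigma^m(x_n)}$ lie in $\mathcal Q$, and (b) $(1+x^n)A\subseteq A_Y$ whenever $x_n\neq0$. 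From (a)--(b) one extracts a pure-degree unitary $w_r=\chi_{a_r}u^r\in\mathcal P$, where $r\Z$ is the group of degrees occurring in $\mathcal P$ (that this is a subgroup follows once the $w_n$'s are available, from $w_nw_m=\chi_{a_n+\sigma^na_m}u^{n+m}$); the relations of (b) together with shift-invariance of $A_Y$ and $(1+x)a_r\in A_Y$ say exactly that $N:=\langle A_Y,\,a_rt^r\rangle$ is a subgroup of $G$ with $N\cap A=A_Y$, $NA/A=r\Z$, and $N\trianglelefteq G$. The inclusion $\mathcal Q\vee L(N)\subseteq\mathcal P$ is clear; conversely each $x=\sum_{n\in r\Z}x_nu^n\in\mathcal P$ has its degree-$n$ piece in $\mathcal Q\cdot w_r^{n/r}$ (use $w_r\in\mathcal P$ to ``untwist'' the $n$-th coefficient into $\mathcal P\cap L(A)=\mathcal Q$ times a generator of $L(N)$), hence $x\in\mathcal Q\vee L(N)$; this is (i). Then (ii) follows by computing $\mathcal P\cap L(\Z)=(\mathcal Q\vee L(N))\cap L(\Z)=L(N\cap\langle t\rangle)=L(k\Z)$ with $k\Z=\{n:t^n\in N\}$ (using $L(A)\cap L(\Z)=\C$ and $L(N_1)\cap L(N_2)=L(N_1\cap N_2)$).

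\textbf{Step 3 (proof of (iii) and (iv)).} Assume $\pi$ is relatively weakly mixing. If $k=0$ then $\mathcal P=\mathcal Q$ and the assertion reads $\mathcal P=L^\infty(Y)$, which holds. If $k\neq0$, then $u^k\in\mathcal P$, and conjugating $u^k$ by $u_b$ ($b\in A$) and multiplying by $u^{-k}$ gives $\chi_{(1+x^k)b}\in\mathcal P\cap L(A)=\mathcal Q$, i.e. $(1+x^k)A\subseteq A_Y$; hence $Z:=\widehat A/((1+x^k)A)^\perp=\widehat A/\{k\text{-periodic sequences}\}$, a quotient of $\widehat A$ by a \emph{finite} shift-invariant subgroup, is a factor of $Y$, so $Y$ is a $\sigma$-intermediate factor of the finite abelian group extension $\widehat A\to Z$. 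By the classification of intermediate factors of such an extension, $Y=\widehat A/H$ for a (closed, $\sigma$-invariant) subgroup $H\subseteq\{k\text{-periodic}\}$, so $\widehat A\to Y$ is itself a compact group extension, hence isometric over $Y$; an isometric extension is relatively weakly mixing only if trivial, so $H=\{0\}$, i.e. $Y=\widehat A$, and then Step 2 yields $\mathcal P=L(A)\rtimes k\Z=L^\infty(Y)\rtimes k\Z$, proving (iii). For (iv): by Sinai's factor theorem $(\widehat A,\sigma)$ admits a factor $\pi\colon\widehat A\to Y$ with $0<h(Y)<\log2$, and then $\mathcal P:=L^\infty(Y)\subseteq L^\infty(\widehat A)=L(A)\subseteq L(G)$ is $\sigma$-invariant, hence $G$-invariant (conjugation by $A$ is trivial on $L(A)$, and by $u$ it preserves $L^\infty(Y)$); but $\mathcal P$ does not come from a normal subgroup, since an abelian normal subgroup of $\Z_2\wr\Z$ must be a shift-invariant subgroup $M$ of $A$ (finitely supported $\sigma$-periodic elements of $A$ vanish, and no $\langle t^j\rangle$ is normal in $G$), whence $L(M)=L^\infty(\widehat A/M^\perp)$ is an algebraic factor of $(\widehat A,\sigma)$, and such factors --- being $\widehat A$ modulo a finite shift-invariant subgroup, or the trivial factor --- have entropy $0$ or $\log2$. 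Thus $\mathcal P$ is exotic and $G$ does not have the ISR property.

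\textbf{The main obstacle} is Step 2, i.e.\ part (i): showing that the off-diagonal part of $\mathcal P$ is governed by a single normal subgroup $N$ --- equivalently, that every Fourier coefficient of an element of $\mathcal P$ is, up to $\mathcal Q$, a fixed character $\chi_{a_n}$ with $n\mapsto a_n$ a cocycle. The delicate point is to make the extraction of the unitaries $w_r$ precise while simultaneously exploiting the continuous spectrum of the inner $A$-action, the MASA/conditional-expectation structure, and the fact that $\mathcal P$ is a von Neumann algebra. By comparison the ergodic input --- that compact group extensions are never relatively weakly mixing and that their intermediate factors are again group sub-extensions, together with Sinai's theorem and the entropy of algebraic factors --- is classical.
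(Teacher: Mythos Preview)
Your sketch identifies the right architecture but leaves the load-bearing step---extracting the pure-degree unitaries $w_r=\chi_{a_r}u^r\in\mathcal P$---unproven, and the route you outline for it (via the coefficient identities (a) and (b)) is circular: both $|x_n|^2\in\mathcal Q$ and $(1+x^n)A\subseteq A_Y$ follow \emph{once} you know $x_nu^n\in\mathcal P$, but you have not shown that the ``degree-$n$ slice'' of $\mathcal P$ lands back in $\mathcal P$. The paper sidesteps this entirely. Rather than analysing Fourier coefficients of elements of $\mathcal P$, it works with the trace-preserving expectation $E\colon L(G)\to\mathcal P$ and the character $\phi(g)=\tau(g^{-1}E(g))$, splitting into two cases according to whether the factor $(Y,\nu)$ is trivial or essentially free (every Bernoulli factor is one or the other). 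In the free case the key observation is that $g^{-1}E(g)\in L^\infty(Y)'\cap L(G)=L(A)$, so $E(g)=g\,p_g$ with $p_g\in L(A)$; then $p_g$ is $\langle g\rangle$-invariant, and for $g\notin A$ the algebraic action $\langle g\rangle\curvearrowright\widehat A$ is ergodic (Schmidt), forcing $p_g\in\C$, hence $p_g\in\{0,1\}$. Thus $E(g)\in\{0,g\}$ for every $g\notin A$, and $N=\{g:E(g)=g\}$ does the job with no extraction needed. The trivial-factor case (which your sketch does not address) is handled by a separate character argument showing $\mathcal P=\C$.

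Your treatment of (iii) is genuinely different from the paper's and mostly sound, but has two issues. First, the case $k=0$: you assert $\mathcal P=\mathcal Q$, but from (i) one only has $\mathcal P=\mathcal Q\vee L(N)$, and $N$ could a priori contain some $a_rt^r$ with $r\neq0$ and $a_r\notin A_Y$ while still $N\cap\langle t\rangle=\{e\}$; ruling this out needs exactly the relative-weak-mixing lemma (if $\sigma^m(v)v^*\in L^\infty(Y)$ for all $m$ then $v\in L^\infty(Y)$) that the paper proves and applies directly. Second, your appeal to a ``classification of intermediate factors'' of a finite abelian group extension is stronger than what is known in general; what you actually need, and what holds, is that an intermediate extension of a compact (hence distal) extension is distal, and a distal extension that is relatively weakly mixing is trivial. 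With those two fixes your route to (iii) works and yields the sharper conclusion $Y=\widehat A$ when $k\neq0$. Your entropy argument for (iv) is correct and is a clean alternative to the paper's countability argument.
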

To understand the role of property (i) above one can mention that its obvious analogue holds also in the case of $SL_2(\Z) \ltimes \Z^2$	(\cite{JiangLiu}), but not in the case of $\Aut_{\textup{fin}}(\Z_2^\infty) \ltimes \Z_2^\infty$ considered in Theorem \ref{thm:B}.

We also investigate the \say{most obvious} semidirect product \( S_\infty \ltimes \mathbb{Z}_2^\infty \). This group also does not have the ISR property, which we prove, constructing certain explicit \say{exotic} invariant subalgebras of its group von Neumann algebra.

The general line of argument for Theorem \ref{thm:A}, Theorem \ref{thm:B} and the results on \( S_\infty \ltimes \mathbb{Z}_2^\infty \) is similar. We begin by noting that the semidirect products under consideration have only one non-trivial normal subgroup. We then proceed to analyze---essentially combinatorially---the possible invariant subalgebras of \( L(G) \), exploiting properties of conditional expectations, a natural variation of the argument characterising the factoriality via the ICC property and a version of the `character method' developed earlier in \cite{dudko2024character}.
As already mentioned, the techniques used to establish Theorem \ref{thm:C} are somewhat different, exploiting notions and methods of ergodic theory.

\subsection*{Organization of the paper}The detailed plan of the article is as follows: after this introduction, in Section \ref{sec:prelim} we discuss general properties of invariant subalgebras and corresponding conditional expectations. Section \ref{sec:ISR} treats the ISR  example 
$S(2^\infty) \ltimes \tilde{C}(X; \mathbb Z_2)$; here we prove Theorem \ref{thm:A}. In Section \ref{sec:uniqueexotic} we focus on $\Aut_{\textup{fin}}(\Z_2^\infty) \ltimes \Z_2^\infty$ and establish Theorem \ref{thm:B}. The classical lamplighter case is taken up in Section~\ref{Section: Lamplighter}. It should not come as a surprise, then, that Theorem~\ref{thm:C} is proven in this section. Finally, in Section \ref{sec:wreath} we gather certain observations concerning invariant subalgebras of $L(S_\infty\ltimes \Z_2^\infty)$ and present several examples.

\section{Preliminaries regarding invariant subalgebras} \label{sec:prelim}
In this short section we gather the conventions and basic facts concerning invariant subalgebras of group von Neumann algebras. We shall follow the following convention throughout the paper. 
\subsection*{Notation}
\begin{itemize}
    \item The scalar products will be linear on the right side.
    \item Throughout the paper, we will work with countable discrete groups.
   \item If $G$ is such a group, we will denote by $e$ its unit element.
   \item $L(G)$ is the von Neumann algebra associated to $G$, i.e.\ the von Neumann algebra generated in $B(\ell^2(G))$ by the shift operators $u_g, g \in G$, where $u_g \delta_{g'} = \delta_{gg'}$ for every $g,g' \in G$.
\end{itemize}
Recall that the canonical trace $\tau$ on $L(G)$ is defined via $\tau(x)=\langle \delta_e,x\delta_e\rangle$ for any $x\in L(G)$. Moreover, as $\tau$ is faithful, the map $L(G)\ni x\mapsto x\delta_e\in \ell^2(G)$ is injective. 
For any $x\in L(G)$, we may write $x=\sum_{g\in G}c_gu_g$ for its Fourier expansion (with $(c_g)_{g\in G}\in\ell^2(G)$), which simply means that $x\delta_e=\sum_{g\in G}c_g\delta_g$;
write $\text{supp}(x)=\{g\in G: c_g\neq 0\}$ for the \emph{support} of $x$. We will often simply write $g$ for $u_g$, viewing $g$ as a unitary in $L(G)$, if no confusion may arise.

A von Neumann subalgebra $\mathcal M\subseteq L(G)$ is said to be \emph{$G$-invariant} if for any $g \in G, m \in \mathcal{M}$ we have $u_g m u_g^* \in \mathcal{M}$; in other words $\mathcal{M}$ is invariant under the natural adjoint action of $G$. In such a case denote by $E: L(G)\rightarrow \mathcal M$ the unique $\tau$-preserving conditional expectation onto $\mathcal{M}$; the commutant of $\mathcal{M}$ is as usual denoted $\mathcal{M}'$. Let us recall some properties of $E$, needed to study invariant von Neumann subalgebras of $L(G)$. 
\begin{proposition}\label{prop: E properties}
	Let $\mathcal M\subseteq L(G)$ be a $G$-invariant von Neumann subalgebra, with the associated $\tau$-preserving conditional expectation $E: L(G)\rightarrow \mathcal M$. Then
	\begin{itemize}
		\item[(1)] $\tau(E(g)s)=\tau(E(g)E(s))=\tau(gE(s))$ for all $s, g\in G$;
		\item[(2)] $sE(g)s^{-1}=E(sgs^{-1})$ for all $s, g\in G$;
		\item[(3)] $sE(s^{-1})\in\mathcal M'\cap L(G)$ for any $s\in G$;
		\item[(4)] the formula $g \mapsto \tau(g^{-1}E(g))$ defines a character (normalised, $G$-invariant, positive-definite function) on $G$;
		\item[(5)] for any $g \in G$ we have $E(g)=0$ if and only if $\tau(g^{-1}E(g))=0$, and $E(g) = g$ if and only if $\tau(g^{-1}E(g))=1$.
	\end{itemize}
\end{proposition}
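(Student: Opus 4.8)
The plan is to derive all five assertions from the basic features of the $\tau$-preserving conditional expectation $E\colon L(G)\to\mathcal M$: it is normal, unital, completely positive, $\mathcal M$-bimodular, satisfies $\tau\circ E=\tau$, and is the \emph{unique} map with these properties; equivalently, on $L^2(L(G),\tau)$ it is implemented by the Jones projection $e_{\mathcal M}$ onto $\overline{\mathcal M\delta_e}$, so that $\widehat{E(x)}=e_{\mathcal M}\widehat x$ for all $x$, where $\widehat x:=x\delta_e$ and $\|x\|_2:=\|\widehat x\|$.

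For (1) I would first record the two elementary identities $\tau(mx)=\tau(mE(x))$ and $\tau(xm)=\tau(E(x)m)$, valid for all $m\in\mathcal M$, $x\in L(G)$; each is immediate from $\tau\circ E=\tau$ and one-sided $\mathcal M$-modularity of $E$ (e.g.\ $\tau(mx)=\tau(E(mx))=\tau(mE(x))$). Applying the first with $m=E(g)$, $x=s$ and the second with $x=g$, $m=E(s)$ yields (1). For (2) I would prove the slightly stronger statement (needed later) that $u_sE(x)u_s^*=E(u_sxu_s^*)$ for all $x\in L(G)$, by checking that the map $\Phi_s(x):=u_sE(u_s^*xu_s)u_s^*$ is a $\tau$-preserving conditional expectation onto $\mathcal M$ and invoking uniqueness: $\Phi_s$ maps into $\mathcal M$, is $\mathcal M$-bimodular, and is the identity on $\mathcal M$ because $u_s^*\mathcal M u_s=\mathcal M$ (this is where $G$-invariance enters) together with bimodularity of $E$, while $\tau\circ\Phi_s=\tau$ by traciality; specialising $x=u_g$ gives (2). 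Part (3) is then a short computation: for $m\in\mathcal M$,
\[
u_sE(u_s^{-1})\,m=u_sE(u_s^{-1}m)=u_sE\big((u_s^{-1}mu_s)\,u_s^{-1}\big)=u_s\,(u_s^{-1}mu_s)\,E(u_s^{-1})=m\,u_sE(u_s^{-1}),
\]
using in turn right $\mathcal M$-modularity of $E$, the inclusion $u_s^{-1}mu_s\in\mathcal M$, and left $\mathcal M$-modularity; so $u_sE(u_s^{-1})$ commutes with $\mathcal M$ and plainly lies in $L(G)$.

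For (4), set $\varphi(g):=\tau(g^{-1}E(g))$. Normalisation $\varphi(e)=\tau(E(1))=\tau(1)=1$ is immediate, and $G$-invariance $\varphi(hgh^{-1})=\varphi(g)$ follows from (2) and traciality. The only part of the Proposition with genuine content is the positive-definiteness of $\varphi$; for this I would pass to the basic construction $\mathcal B:=\langle L(G),e_{\mathcal M}\rangle$ with its canonical faithful normal semifinite trace $\mathrm{Tr}$, normalised so that $\mathrm{Tr}(xe_{\mathcal M}y)=\tau(xy)$ for $x,y\in L(G)$, and recall the standard relation $e_{\mathcal M}ye_{\mathcal M}=E(y)e_{\mathcal M}$ for $y\in L(G)$. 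Put $p_g:=u_ge_{\mathcal M}u_g^*$: each $p_g$ is a projection with $\mathrm{Tr}(p_g)=\mathrm{Tr}(e_{\mathcal M})=\tau(1)=1$, hence $p_g\in L^2(\mathcal B,\mathrm{Tr})$. Combining these relations gives $\varphi(k)=\mathrm{Tr}\big((u_k^*e_{\mathcal M}u_k)\,e_{\mathcal M}\big)$ for every $k\in G$; specialising $k=g^{-1}h$, rewriting $u_k^*e_{\mathcal M}u_k=u_h^*p_gu_h$, and using traciality of $\mathrm{Tr}$, one obtains
\[
\varphi(g^{-1}h)=\mathrm{Tr}(p_gp_h)=\langle\widehat{p_g},\widehat{p_h}\rangle_{L^2(\mathcal B,\mathrm{Tr})}.
\]
Thus $\big(\varphi(g_i^{-1}g_j)\big)_{i,j}$ is a Gram matrix for every finite family $g_1,\dots,g_n\in G$, hence positive semidefinite, i.e.\ $\varphi$ is positive-definite.

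Finally, for (5) I would combine $\tau\circ E=\tau$ with right $\mathcal M$-modularity and $*$-preservation of $E$ to get $\varphi(g)=\tau\big(E(g^{-1})E(g)\big)=\|E(g)\|_2^2$, and use the orthogonal (Pythagorean) decomposition $\|u_g\|_2^2=\|E(u_g)\|_2^2+\|u_g-E(u_g)\|_2^2$ coming from $\widehat{E(u_g)}=e_{\mathcal M}\widehat{u_g}$; since $\|u_g\|_2^2=1$, it follows that $\varphi(g)=0$ forces $E(g)=0$ and $\varphi(g)=1$ forces $E(g)=g$, the reverse implications being trivial. I expect the positive-definiteness in (4) to be the only real obstacle: realising the scalars $\tau(g^{-1}E(g))$ as entries of an honest Gram matrix is what the basic construction is there to provide, whereas the remaining parts are a direct unwinding of the bimodularity and trace-invariance of $E$.
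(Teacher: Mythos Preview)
Your proposal is correct in every part. The paper itself does not supply a proof of this proposition: it simply states that ``all the above are well-known'' and refers to \cite[Proposition~3.2]{jz} for item~(4). So your write-up is in fact considerably more detailed than what appears in the paper, and the arguments you give for (1)--(3) and (5) are exactly the routine unpackings of bimodularity, trace-preservation, and the Jones-projection picture that one would expect. Your approach to (4) via the basic construction---realising $\varphi(g^{-1}h)$ as $\mathrm{Tr}(p_gp_h)$ for the finite-trace projections $p_g=u_ge_{\mathcal M}u_g^*$, and hence as a Gram matrix entry---is one of the standard proofs of this fact and is almost certainly what the cited reference does as well.
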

\begin{proof}
	All the above are well-known; for (4) see for example \cite[Proposition 3.2]{jz}.
\end{proof}

For any $G$ and a set of elements $S\subseteq G$ let $C_G(S):=\{ g \in G: gs = sg \textup{ for all } s \in S\}$ denote the \emph{centralizer} of $S$. Given an element $g\in G$ let us call an element $h\in G$ \emph{finitely permuted by the centralizer} of $g$ if $\sharp\{t^{-1}ht\mid t\in C_G(\{g\})\}<\infty$. Denote the set of such elements by $\fpc(g)$. The following statement seems well-known. For a proof, see \cite[Lemma~4.6]{dudko2024character}.
\begin{lemma}\label{lemma: fpc application} Let $\mathcal M\subseteq L(G)$ be a $G$-invariant von Neumann subalgebra, with the associated $\tau$-preserving conditional expectation $E: L(G)\rightarrow \mathcal M$. Then  $E(g)\in L(\fpc(g))$ for any $g\in G$.
\end{lemma}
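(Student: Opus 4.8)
The plan is to combine the equivariance of $E$ recorded in Proposition~\ref{prop: E properties}(2) with the square-summability of Fourier coefficients. First I would record that $\fpc(g)$ is in fact a \emph{subgroup} of $G$: it clearly contains $e$, and if $h_1,h_2\in\fpc(g)$ then every $C_G(\{g\})$-conjugate of $h_1h_2$ (resp.\ of $h_1^{-1}$) is a product (resp.\ an inverse) of a $C_G(\{g\})$-conjugate of $h_1$ and one of $h_2$, so the relevant orbits are finite. Hence $L(\fpc(g))$ is a genuine von Neumann subalgebra of $L(G)$, and it admits the standard description $L(\fpc(g))=\{x\in L(G):\suppe(x)\subseteq\fpc(g)\}$; indeed, for any subgroup $H\le G$, an element of $L(G)$ whose support lies in $H$ is fixed by the $\tau$-preserving conditional expectation onto $L(H)$ and therefore lies in $L(H)$.

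Next I would fix $g\in G$ and write the Fourier expansion $E(g)=\sum_{h\in G}c_hu_h$ with $(c_h)_{h\in G}\in\ell^2(G)$. For every $t\in C_G(\{g\})$ one has $tgt^{-1}=g$, so Proposition~\ref{prop: E properties}(2) gives $u_tE(g)u_t^{*}=E(tgt^{-1})=E(g)$. Comparing Fourier coefficients on both sides (equivalently, evaluating both vectors at $\delta_e\in\ell^2(G)$ and reindexing) yields $c_{t^{-1}ht}=c_h$ for all $h\in G$ and all $t\in C_G(\{g\})$; that is, the function $h\mapsto c_h$ is constant on each $C_G(\{g\})$-conjugacy orbit.

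To conclude, suppose $h\in\suppe(E(g))$, i.e.\ $c_h\neq0$. Since $c$ is constant and nonzero on the orbit $O_h:=\{t^{-1}ht:t\in C_G(\{g\})\}$, the bound $\sum_{h'\in O_h}|c_{h'}|^2\le\sum_{h'\in G}|c_{h'}|^2<\infty$ forces $O_h$ to be finite, i.e.\ $h\in\fpc(g)$. Thus $\suppe(E(g))\subseteq\fpc(g)$, and by the description of $L(\fpc(g))$ from the first paragraph we obtain $E(g)\in L(\fpc(g))$. The argument is entirely elementary — a variant of the classical proof that $L(G)$ is a factor exactly when $G$ is ICC — and the only point needing a little care is the two bookkeeping facts isolated at the start: that $\fpc(g)$ is a subgroup, and that support contained in a subgroup $H$ entails membership in $L(H)$.
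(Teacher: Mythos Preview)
Your proof is correct; it is the standard ICC-type argument (equivariance forces the Fourier coefficients of $E(g)$ to be constant on $C_G(\{g\})$-orbits, and $\ell^2$-summability then rules out infinite orbits), and the two bookkeeping points you isolate are handled properly. The paper itself gives no proof of this lemma but simply cites \cite{dudko2024character}, Lemma~4.6, where the same argument appears.
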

If $N\subseteq G$ is a subgroup, it is easy to see that we can identify in a natural way $L(N)$ with a von Neumann subalgebra of $L(G)$ (and the $\tau$-preserving conditional expectation $E:L(G) \to L(N)$ is a natural extension of the map sending $u_g$ to itself for $g \in N$ and to $0$ for $g \in G \setminus N$). If $N$ is in addition \emph{normal}, $L(N)$ becomes an invariant von Neumann subalgebra of $L(G)$. The following definition is key for what follows.
\begin{definition}[ISR-property]
	We say that $G$ has the ISR (invariant subalgebra rigidity) property if every invariant von Neumann subalgebra $\mathcal M \subseteq L(G)$ is of the form $\mathcal M = L(N)$ for some normal subgroup $N \trianglelefteq G$.
\end{definition}

We will occasionally refer to an invariant von Neumann subalgebra $\mathcal M \subseteq L(G)$ as \emph{exotic} if it is not of the form $L(N)$ for $N\trianglelefteq G$. An infinite group $G$ with the ISR property must be necessarily ICC by \cite[Proposition 3.1]{amrutam2023invariant}.
Sometimes, we would like to understand when certain subspaces are preserved under the canonical conditional expectation. The following technical lemma allows us to do so.
\begin{proposition}\label{prop: E(S) subset S}
	Let $\mathcal P\subset\mathcal B$ be an inclusion of von Neumann algebras, let  $\mathcal\tau$ be a faithful trace on $\mathcal B$ and let $E:\mathcal B\to\mathcal P$ be the $\tau$-preserving conditional expectation. Let $\mathcal A\subset\mathcal B$ be a von Neumann subalgebra and let $S\subset\mathcal B$ be a subspace such that:
	\begin{itemize}
		\item $E(\mathcal A)\subset\mathcal A$;
		\item $E(S)\subseteq S+\mathcal A$;
		\item $\tau$ restricted to $S\mathcal A$ is equal to zero.
	\end{itemize}
	Then $E(S)\subseteq S$.
	\begin{proof}
		For $s\in S$, let us write $E(s)=t+A$ for some $t\in S$ and $A\in\mathcal{A}$. Applying $E$ on both sides, we get that
		\[t+A=E(s)=E(t)+E(A).\]
		Therefore, $E(t)=t+(A-E(A))$. Since $E(A)\in\mathcal{A}$, $A_2=A-E(A)\in\mathcal{A}$. Now, 
		\[E(t)E(t)^*=(t+A_2)(t^*+A_2^*)=tt^*+tA_2^*+A_2t^*+A_2A_2^*.\]
		Since $\tau|_{S\mathcal{A}}=0$, applying $\tau$ on both sides, we get that
		\[\tau(tt^*)+\tau(A_2A_2^*)=\tau\left(E(t)E(t)^*\right)\le \tau(E(tt^*))=\tau(tt^*).\]
		This is turn implies that $\tau(A_2A_2^*)=0$, hence, $A-E(A)=A_2=0$. Therefore, $E(t)=t$. Note that this implies $A=E(s-t)$. Now, since $\tau|_{S\mathcal{A}}=0$,
		\[\tau(AA^*)=\tau\left(E((s-t)A^*)\right)=\tau\left((s-t)A^*)\right)=0\]
		This shows that $A=0$, establishing the claim.
	\end{proof}
\end{proposition}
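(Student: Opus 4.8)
The plan is to exploit two standard features of a $\tau$-preserving conditional expectation $E$: it is idempotent, and it satisfies the Kadison--Schwarz inequality $E(x)E(x)^*\le E(xx^*)$, so that $\tau(E(x)E(x)^*)\le\tau(E(xx^*))=\tau(xx^*)$ for every $x\in\mathcal B$. Fix $s\in S$ and, using the hypothesis $E(S)\subseteq S+\mathcal A$, write $E(s)=t+A$ with $t\in S$ and $A\in\mathcal A$; the goal is to prove $A=0$.

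First I would determine $E(t)$. Applying $E$ to $E(s)=t+A$ and using $E\circ E=E$ together with $E(\mathcal A)\subseteq\mathcal A$ gives $t+A=E(t)+E(A)$, hence $E(t)=t+A_2$ where $A_2:=A-E(A)\in\mathcal A$. Expanding $E(t)E(t)^*=(t+A_2)(t^*+A_2^*)=tt^*+tA_2^*+A_2t^*+A_2A_2^*$ and applying $\tau$, the two cross terms lie in $S\mathcal A$ (respectively its adjoint) and hence vanish by the hypothesis $\tau|_{S\mathcal A}=0$, leaving $\tau(E(t)E(t)^*)=\tau(tt^*)+\tau(A_2A_2^*)$. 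Comparing with $\tau(E(t)E(t)^*)\le\tau(tt^*)$ forces $\tau(A_2A_2^*)=0$, so $A_2=0$ by faithfulness of $\tau$, i.e.\ $E(t)=t$.

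Knowing $E(t)=t$, subtraction from $E(s)=t+A$ gives $A=E(s)-E(t)=E(s-t)$ with $s-t\in S$. To conclude $A=0$ I would compute $\tau(AA^*)=\tau(E(s-t)A^*)$ and, using the $\mathcal P$-bimodularity of $E$ and $\tau\circ E=\tau$, rewrite this as $\tau((s-t)A^*)$ (equivalently as $\tau((s-t)E(A^*))$ with $E(A^*)\in\mathcal A$); since $(s-t)A^*\in S\mathcal A$, the hypothesis $\tau|_{S\mathcal A}=0$ yields $\tau(AA^*)=0$, hence $A=0$ and $E(s)=t\in S$. The only delicate point is the bookkeeping: identifying which products genuinely lie in $S\mathcal A$ so that $\tau|_{S\mathcal A}=0$ may be invoked, and correctly moving $E$ past $\tau$ in the last step via trace-invariance and $\mathcal P$-bimodularity. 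Once this is arranged there is no analytic difficulty, as the Kadison--Schwarz/trace-preservation estimate does the essential work.
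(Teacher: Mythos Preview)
Your proof is correct and follows essentially the same route as the paper's own argument: the same decomposition $E(s)=t+A$, the same two-step elimination (first $A_2=A-E(A)=0$ via the Kadison--Schwarz/contractivity inequality $\tau(E(t)E(t)^*)\le\tau(tt^*)$, then $A=0$ via $\tau(AA^*)=\tau((s-t)A^*)\in\tau(S\mathcal A)=\{0\}$). Your explicit naming of Kadison--Schwarz and your remark about the adjoint handling of $A_2t^*$ are the only cosmetic differences.
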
\noindent
Note that the conditions of the above lemma imply that $S \cap \mathcal{A} =\{0\}$.

\section{Amenable ISR group with a non-trivial normal abelian subgroup} \label{sec:ISR}

In this section we construct an example of an amenable ISR group with a non-trivial normal abelian subgroup.

Consider the sequence of symmetric groups $(S(2^n))_{n \in \N}$ with inclusions $S(2^n)\to S(2^{n+1})$ given by: $s\to (s,s)$. Denote by $S(2^\infty)$ the inductive limit $\lim\limits_{n\to\infty}S(2^n)$. Observe that for each $n\in \N$ the group $S(2^n)$ acts as a group of permutations on $\{0,1\}^n$. We let $S(2^n)$ act on $X=\{0,1\}^\infty$ by: $s((x,y))=(s(x),y)$ for any $x\in \{0,1\}^n$, $y\in\{0,1\}^\infty$. Then $S(2^\infty)$ can be viewed as the group of homeomorphisms of $X$ whose action depends only on finitely many coordinates. For $g\in S(2^\infty)$ we will write $\suppg(g)=\{x\in X:g(x)\neq x\}$.

Let $C(X;\mathbb{Z}_2)=\{f:X\to \mathbb Z_2\;\text{continuous}\}$. Observe that each function in $C(X;\mathbb{Z}_2)$ can be written as 
\begin{equation}\label{equation: f_A}
	f_A(x)=\left\{\begin{array}{ll}0\in \mathbb Z_2,& x\in X\setminus A,\\ 1\in\mathbb Z_2,& x\in A, \end{array}\right.
\end{equation} where $A\subseteq X$ is a clopen set. We will view $C(X;\mathbb{Z}_2)$ as a group with multiplication given by pointwise addition (modulo 2); note that in terms of elements $f_A$, multiplication corresponds to taking the symmetric difference of sets. The group $S(2^\infty)$ acts naturally on $C(X)$. Let $S=\{f_X,f_\varnothing\}$. Then $S$ is a subgroup  of $C(X;\mathbb{Z}_2)$ fixed by the action of $S(2^\infty)$. Let $\widetilde C(X;\mathbb{Z}_2)=C(X;\mathbb{Z}_2)/S$. For a clopen set $A\subseteq X$ we will write  $\widetilde f_A=f_A+S$ for the relevant element of $\widetilde C(X;\mathbb{Z}_2)$. Observe that $\widetilde f_{X\setminus A}=\widetilde f_A$ for any clopen $A\subseteq X$; we will say that $A\subseteq X$ is \emph{non-trivial} if $A \neq \emptyset, A \neq X$. 
Throughout this section, we will set
\[ G=S(2^\infty)\ltimes \widetilde C(X;\mathbb{Z}_2).\]
We begin by analysing normal subgroups of $G$ and as it turns out, $G$ admits only one non-trivial normal subgroup.
\begin{proposition}
	The only non-trivial normal subgroup of $G$ is $\widetilde C(X;\mathbb{Z}_2)$.
\end{proposition}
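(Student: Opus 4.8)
The plan is to establish the sharper statement that $G$ has exactly three normal subgroups, namely $\{e\}$, $A:=\widetilde C(X;\Z_2)$ and $G$. I would separate off two ingredients. \emph{(I) $S(2^\infty)$ is simple.} Here one uses that $S(2^\infty)=\bigcup_n S(2^n)$, and that the embedding $S(2^n)\hookrightarrow S(2^{n+1})$, $s\mapsto(s,s)$, has image inside the alternating group $\mathrm{Alt}(2^{n+1})$, because $(s,s)$ is a disjoint union of two copies of $s$ and hence has sign $\sign(s)^2=1$; in particular every element of $S(2^\infty)$ already lies in $\mathrm{Alt}(2^m)$ for some $m$. If $K\trianglelefteq S(2^\infty)$ is nontrivial, pick $e\neq k\in K$; then $K\cap S(2^m)\trianglelefteq S(2^m)$ is nontrivial for all large $m$, hence contains $\mathrm{Alt}(2^m)$ (as $2^m\geq 5$), so $K\supseteq\bigcup_m\mathrm{Alt}(2^m)=S(2^\infty)$. \emph{(II) The only $S(2^\infty)$-invariant subgroups of $A$ are $\{e\}$ and $A$} (the action being the conjugation action inside $G$, i.e.\ the given action on clopen sets); I expect (II) to be the main point and return to it below.

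Granting (I) and (II), let $N\trianglelefteq G$ with $\{e\}\neq N\neq G$. If $N\cap A\neq\{e\}$, then $N\cap A$ is an $S(2^\infty)$-invariant subgroup of $A$, so $A\subseteq N$ by (II); hence $N/A$ is a proper normal subgroup of $G/A\cong S(2^\infty)$, so $N/A=\{e\}$ by (I) and $N=A$. If $N\cap A=\{e\}$, then the image of $N$ in $G/A$ is a nontrivial normal subgroup (it is nontrivial, since a trivial image would force $N\subseteq A$ and hence $N=\{e\}$), so by (I) it equals $G/A$, i.e.\ $NA=G$. For $n\in N$ and $a\in A$ the commutator $[n,a]$ lies in $A$ (since $A\trianglelefteq G$) and in $N$ (since $N\trianglelefteq G$), hence in $N\cap A=\{e\}$; thus $N$ centralizes $A$, and since $A$ is abelian with $NA=G$ this gives $A\subseteq Z(G)$. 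But then every element of $A$ is fixed by the $S(2^\infty)$-action, so any $e\neq a\in A$ (which exists and satisfies $a^2=e$) yields the nontrivial proper invariant subgroup $\{e,a\}$, contradicting (II). Hence $N=A$ in all cases.

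It remains to prove (II), where the only subtlety is a parity issue, which is why a naive induction on levels does not work directly. Recall that $A$ is generated by the elements $\widetilde f_{[u]}$ with $[u]$ a cylinder; let $A_n\leq A$ denote the finite subgroup generated by the $\widetilde f_{[u]}$ with $[u]$ of level $n$, so that $A_n\subseteq A_{n+1}$ (a level-$n$ cylinder being the disjoint union of the two level-$(n+1)$ cylinders refining it) and $\bigcup_n A_n=A$; moreover the level-$n$ copy of $S(2^n)$ in $S(2^\infty)$ acts on the $2^n$ level-$n$ cylinders as the full symmetric group. Let $M\leq A$ be $S(2^\infty)$-invariant with $M\neq\{e\}$, fix $e\neq\widetilde f_B\in M$, and choose $n_0$ with $B$ a nonempty proper union of level-$n_0$ cylinders; then $n_0\geq 1$ since $B\notin\{\varnothing,X\}$. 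Fix any $n\geq n_0$. Regarded at level $n+1$, $B$ corresponds to a set $W$ of level-$(n+1)$ cylinders with $\varnothing\subsetneq W\subsetneq\{\text{all }2^{n+1}\text{ of them}\}$, and $2^{n+1}\geq 4$. For $[a]\in W$ and $[b]\notin W$, applying the transposition $\sigma\in S(2^{n+1})$ that swaps $[a]$ and $[b]$ gives $\widetilde f_{\sigma(B)}=\sigma\cdot\widetilde f_B\in M$, and therefore $\widetilde f_B\,\widetilde f_{\sigma(B)}=\widetilde f_{B\triangle\sigma(B)}=\widetilde f_{[a]\cup[b]}\in M$. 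Since products in $A$ correspond to symmetric differences, for any pair $\{[c],[d]\}$ of distinct level-$(n+1)$ cylinders we obtain $\widetilde f_{[c]\cup[d]}\in M$: directly when one of them is in $W$ and the other is not, and otherwise by writing $\widetilde f_{[c]\cup[d]}=\widetilde f_{[c]\cup[b]}\,\widetilde f_{[b]\cup[d]}$ for some $[b]\notin W$ (if both are in $W$) or $\widetilde f_{[c]\cup[d]}=\widetilde f_{[a]\cup[c]}\,\widetilde f_{[a]\cup[d]}$ for some $[a]\in W$ (if both are outside $W$). In particular $\widetilde f_{[v]}\in M$ for every level-$n$ word $v$, as $[v]$ is the union of the two level-$(n+1)$ cylinders refining it; hence $A_n\subseteq M$. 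Since this holds for all $n\geq n_0$, and the remaining generators $\widetilde f_{[u]}$ with $[u]$ of level $<n_0$ are products of level-$n_0$ ones, $M$ contains all generators of $A$, so $M=A$.
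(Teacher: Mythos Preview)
Your proof is correct. The overall strategy differs from the paper's in that you organise the argument structurally: you isolate (I) the simplicity of $S(2^\infty)$ (which the paper simply asserts) and (II) the absence of nontrivial proper $S(2^\infty)$-invariant subgroups of $A$, and then combine them via the quotient $G/A\cong S(2^\infty)$ together with a centralizer argument (if $N\cap A=\{e\}$ then $N$ centralises $A$, forcing $A\subseteq Z(G)$, which contradicts (II)). The paper instead keeps everything inside $G$: given a ``mixed'' element $g\widetilde f_A\in N$ with $g\neq e$, it splits into the cases $g(A)\neq A$ and $g(A)=A$ and in each case writes down an explicit commutator that lands in $N\cap A$ or $N\cap S(2^\infty)$, then bootstraps. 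Your quotient-and-centralizer route is cleaner and immediately reusable for any semidirect product $H\ltimes A$ with $H$ simple, $A$ abelian, and no nontrivial proper $H$-invariant subgroup of $A$; the paper's commutator computations are more hands-on but make the mechanism more visible. Your treatment of (II) is also more detailed than the paper's analogous paragraph: by passing to level $n+1$ and first producing all $\widetilde f_{[c]\cup[d]}$ for pairs of level-$(n+1)$ cylinders, you sidestep the parity obstruction that would arise if one tried to obtain single level-$n$ cylinders directly.
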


\begin{proof}
		Let $N$ be a normal subgroup of $G$.

If $N\cap S(2^\infty)\neq \{e\}$ then $N\supseteq S(2^\infty)$, since $S(2^\infty)$ is simple.

If $N\cap \widetilde C(X;\mathbb{Z}_2)\neq\{e\}$ then $N\supseteq \widetilde C(X;\mathbb{Z}_2)$. Indeed, let $A\subseteq X$ be a non-trivial clopen set such that $\widetilde f_A\in N$. For any sufficiently large $n\in\mathbb N$ there exists a non-trivial subset $A_n\subseteq\{0,1\}^n$ such that $A=\bigcup\limits_{x\in A_n}[x],$ where 
\begin{equation}\label{equation: cylinder sets}[x]=\{(x,y):y\in \{0,1\}^\infty\}\subseteq X\end{equation} 
is the cylinder set of level $n$ corresponding to $x\in\{0,1\}^n$. In the group $S(2^n)$ of permutations on $\{0,1\}^n$ there exists a permutation $s$ such that $\#\{s(A_n)\Delta A_n\}=2$. Then, for any $y\in\{0,1\}^{n-1}$ there exists $t\in S(2^n)$ such that $t(s(A_n)\Delta A_n)=y\times \{0,1\}$. We obtain that $[y]=t(s(A)\Delta A)$ and so $\widetilde f_{[y]}\in N$. Since this is true for all sufficiently small cylinders in $X$, we obtain that $N\supseteq \widetilde C(X;\mathbb{Z}_2)$.
	
Now, assume  that $N\neq\{e\}$ and $N\neq\widetilde C(X;\mathbb{Z}_2)$. Then the last paragraph implies that $g\cdot\widetilde f_A\in N$ for some $g\in S(2^\infty)\setminus \{e\}$ and a (possibly trivial) clopen set $A\subseteq X$.  

	\vskip 0.1cm\noindent {\bf a)} Assume first that  $g(A)\neq A$. One has:
$$ N\ni g\widetilde f_A\cdot (g^{-1}\cdot g\widetilde f_A\cdot g)^{-1}=\widetilde f_{g(A)\Delta A}.$$ 
We obtain that $N\cap \widetilde C(X;\mathbb Z_2)\neq\{e\}$. Then, again by the last paragraph, $N\supseteq \widetilde C(X;\mathbb{Z}_2)$. Thus $\widetilde f_A\in N$ and further  $g\in N$. By the first lines of the proof, $N\supseteq S(2^\infty)$. It follows that $N=G$.

\vskip 0.1cm \noindent {\bf b)}  Assume now that $g(A)=A$. Either $A\cap \suppg(g)\neq\varnothing$ or $(X\setminus A)\cap \suppg(g)\neq\varnothing$. 
Take any $h\in S(2^\infty)$ supported inside either $A\cap \suppg(g)$ or $(X\setminus A)\cap \suppg(g)$ and such that $hg\neq gh$; it exists, as non-empty clopen sets in $X$ are necessarily big enough to support such $h$.
	Then 
	$$N\ni g\widetilde f_A\cdot (hg\widetilde f_A h^{-1})^{-1}=ghg^{-1}h^{-1}.$$ It follows that $N\cap S(2^\infty)\neq\{e\}$. By the first paragraph, $N\supseteq S(2^\infty)$. Take any non-trivial clopen set $B\subseteq X$. There exists $h\in S(2^\infty)$ such that $h(B)\neq B$. Then $\widetilde f_{h(B)\Delta B}=h\widetilde f_Bh^{-1}\widetilde f_B\in N$. By the second paragraph, $N\supseteq \widetilde C(X;\mathbb{Z}_2)$. Therefore, $N=G$.
	
\end{proof}

The next lemma will be used to `spread' subsets of $X$ using elements of $S(2^\infty)$.
\begin{lemma}\label{lemma: tilde f_A separation}
	For any clopen subsets $A,B\subseteq X$ with $B\notin\{\varnothing, X, A, X\setminus A\}$ there exists a sequence of elements $(h_n)_{n \in \N}$ of $S(2^{\infty})$ such that for each $n \in \N$ we have $h_n(A)=A$ and the sets $h_n(B)$ are pairwise distinct.
\end{lemma}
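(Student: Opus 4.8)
The plan is to produce the $h_n$ by hand, as transpositions of suitably chosen cylinders sitting inside $A$.

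First I would decompose $B$ along $A$, writing $B = (B \cap A) \sqcup (B \cap (X \setminus A))$. If both $B \cap A \in \{\varnothing, A\}$ and $B\cap(X\setminus A)\in\{\varnothing, X\setminus A\}$, then $B$ is one of $\varnothing$, $A$, $X\setminus A$, $X$, contrary to hypothesis; hence at least one of the chains $\varnothing \subseteq B\cap A \subseteq A$ or $\varnothing \subseteq B\cap(X\setminus A)\subseteq X\setminus A$ is strict at both ends. Since $h(A) = A$ is equivalent to $h(X\setminus A) = X\setminus A$, the statement is unchanged upon swapping $A$ and $X\setminus A$, so I may assume $\varnothing\subsetneq B\cap A\subsetneq A$.

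Next I would build the $h_n$. Both $B\cap A$ and $A\setminus B$ are nonempty clopen sets, so, fixing a level $n_0$ at which both are unions of level-$n_0$ cylinders, I can pick cylinders $[u]\subseteq B\cap A$ and $[v]\subseteq A\setminus B$, necessarily disjoint. For each $n\ge 1$ I choose a level-$(n_0+n)$ subcylinder $P_n\subseteq[u]$ and a level-$(n_0+n)$ subcylinder $Q_n\subseteq[v]$ in such a way that the $P_n$ are pairwise disjoint and the $Q_n$ are pairwise disjoint (for instance $P_n=[u\,0^{n-1}1]$ and $Q_n=[v\,0^{n-1}1]$); note that automatically $P_n\cap Q_m=\varnothing$ for all $n,m$ since $[u]\cap[v]=\varnothing$. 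Let $h_n\in S(2^{n_0+n})\subseteq S(2^\infty)$ be the transposition interchanging the two cylinders $P_n$ and $Q_n$: as a homeomorphism of $X$ it maps $P_n$ onto $Q_n$ and $Q_n$ onto $P_n$ (respecting tails) and is the identity on $X\setminus(P_n\cup Q_n)$.

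Finally I would verify the two required properties. Since $P_n,Q_n\subseteq A$ and $h_n$ fixes everything outside $P_n\cup Q_n$, we get $h_n(A)=A$. Because $P_n\subseteq[u]\subseteq B$ while $Q_n\subseteq[v]\subseteq A\setminus B$ is disjoint from $B$, we have $h_n(B)=(B\setminus P_n)\cup Q_n$; and for $m\ne n$ the set $Q_n$ is contained in $h_n(B)$ but disjoint from $h_m(B)=(B\setminus P_m)\cup Q_m$ (as $Q_n$ misses $B$, misses $P_m$, and misses $Q_m$), hence $h_n(B)\ne h_m(B)$, and after reindexing we obtain the desired sequence. I do not anticipate a genuine obstacle here: the only point requiring (routine) care is the bookkeeping making the subcylinders pairwise disjoint, together with the elementary observation that a transposition in $S(2^m)$ acts on $X$ precisely by swapping two level-$m$ cylinders and fixing the remaining points, which relies on the fact that elements of $S(2^m)$ leave the tails of sequences untouched.
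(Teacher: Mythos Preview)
Your proof is correct and follows essentially the same approach as the paper: reduce to the case $\varnothing\subsetneq A\cap B\subsetneq A$, then take $h_n$ to be a transposition in $S(2^{n_0+n})$ swapping a level-$(n_0+n)$ cylinder inside $A\cap B$ with one inside $A\setminus B$. The only cosmetic difference is that you arrange the $Q_n$ to be pairwise disjoint in order to verify $h_n(B)\neq h_m(B)$, whereas the paper leaves that verification implicit (it follows already from the cylinders having different levels, hence different measures).
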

\begin{proof} 
	Replacing $A$ with $X\setminus A$ if necessary we may assume that $A\cap B\neq\varnothing$ and $A\setminus B\neq \varnothing$. Let $m\in\mathbb N$ be such that $A$ and $B$ are unions of cylinders of level $m$. For each $n\in\mathbb N$ pick a point $x_n\in\{0,1\}^{m+n}$ and a point $y_n\in\{0,1\}^{m+n}$ such that $$[x_n]\subseteq A\cap B,\;\;[y_n]\subseteq A\setminus B.$$
	Let $h_n\in S(2^{m+n})$ be the transposition of $x_n$ and $y_n$. It is not hard to see that $h_n(A)=A$ and the sets $h_n(B)$ are pairwise distinct for $n\in\mathbb N$.
\end{proof}
The next two results compute the sets $fpc(g)$ for particular elements $g \in G$.
\begin{lemma}\label{lemma: fpc widetilde f_A}
	Let $A\subseteq X$ be a non-trivial clopen set. Then $\fpc(\widetilde f_A)=\{e,\widetilde f_A\}$.
\end{lemma}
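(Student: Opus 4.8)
The plan is to compute $\fpc(\widetilde{f}_A)$ by understanding the centralizer $C_G(\widetilde{f}_A)$ and then seeing which elements of $G$ have finite orbit under its conjugation action. First I would observe that since $\widetilde{f}_A$ lies in the abelian normal subgroup $\widetilde{C}(X;\mathbb{Z}_2)$, the whole of $\widetilde{C}(X;\mathbb{Z}_2)$ commutes with it; in particular $\{e, \widetilde{f}_A\} \subseteq \fpc(\widetilde{f}_A)$ trivially, since singleton conjugacy classes are finite and $\widetilde{f}_A$ is central in $\widetilde{C}(X;\mathbb Z_2)$. So the real content is the reverse inclusion: no other element of $G$ is finitely permuted by $C_G(\widetilde{f}_A)$.

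The key step is to identify enough of $C_G(\widetilde{f}_A)$. An element $s \in S(2^\infty)$ satisfies $s \widetilde{f}_A s^{-1} = \widetilde{f}_{s(A)}$, and this equals $\widetilde{f}_A$ precisely when $s(A) = A$ or $s(A) = X \setminus A$; write $H_A \le S(2^\infty)$ for the (large) subgroup of permutations stabilizing $\{A, X\setminus A\}$, which certainly contains all permutations supported inside $A$ (stabilizing $A$ setwise). More generally a typical element of $C_G(\widetilde{f}_A)$ has the form $s \cdot \widetilde{f}_B$ with $s \in H_A$ and $B$ arbitrary clopen — one checks the semidirect product commutation relation directly. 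Now take an arbitrary $g = s \cdot \widetilde{f}_B \in G$ with $g \notin \{e, \widetilde{f}_A\}$ and show its $C_G(\widetilde{f}_A)$-conjugacy class is infinite, splitting into cases. If $s \ne e$: conjugating $g$ by elements $\widetilde{f}_C$ with $C$ supported inside $A$ (these lie in $C_G(\widetilde f_A)$) produces $s \cdot \widetilde{f}_{B \Delta C \Delta s(C)}$, and because $\operatorname{supp}(s)$ meets $A$ in a set large enough that $s(C) \ne C$ for infinitely many such $C$ (using that $X$ has no atoms, so any nonempty clopen subset of $A \cap \operatorname{supp}(s)$ contains a proper nonempty clopen subset, and we can keep shrinking), we get infinitely many distinct conjugates — here I would invoke the "spreading" mechanism in the spirit of Lemma~\ref{lemma: tilde f_A separation}. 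If $s = e$, so $g = \widetilde{f}_B$ with $B \notin \{\varnothing, X, A, X\setminus A\}$, then Lemma~\ref{lemma: tilde f_A separation} applied with this $A$ and $B$ gives a sequence $(h_n)$ in $S(2^\infty)$ with $h_n(A) = A$ (so $h_n \in C_G(\widetilde{f}_A)$) and the $h_n(B)$ pairwise distinct, whence $h_n \widetilde{f}_B h_n^{-1} = \widetilde{f}_{h_n(B)}$ are pairwise distinct; thus the class is infinite.

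The main obstacle I anticipate is the case $s \ne e$ while $s(A) = A$ but $g = s\widetilde f_B$ might a priori still have finite centralizer-orbit — one must be careful that the chosen conjugating elements genuinely lie in $C_G(\widetilde{f}_A)$ and genuinely produce infinitely many distinct group elements, which forces a somewhat delicate bookkeeping of supports: we need a nonempty clopen set inside which we can wiggle $C$ and detect $s(C) \ne C$, and we must make sure this wiggling does not accidentally fix $s \widetilde f_B$. Handling the subcase where $s$ acts trivially on all of $A$ (so that conjugating by $\widetilde f_C$, $C \subseteq A$, leaves $g$ unchanged) requires instead using permutations $h$ supported inside $A$ — these commute with $\widetilde f_A$ — and choosing them so that $h s h^{-1} \ne s$ or $h(B) \cap A$ varies, which is possible because if $s$ fixes $A$ pointwise then $\operatorname{supp}(s) \subseteq X \setminus A$, and then one can move mass of $B$ within $A$ (if $B \cap A \notin \{\varnothing, A\}$) or, if $B \cap A \in \{\varnothing, A\}$, one reduces to controlling $B$ and $s$ on $X \setminus A$ via permutations supported there, again using non-atomicity to generate an infinite orbit. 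Assembling these cases and confirming that they exhaust all $g \notin \{e, \widetilde f_A\}$ is the crux; once done, $\fpc(\widetilde f_A) \subseteq \{e, \widetilde f_A\}$ follows, giving equality.
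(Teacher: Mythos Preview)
Your case $s=e$ is exactly the paper's argument. For $s\ne e$, though, you restrict to conjugating by $\widetilde f_C$ with $C\subseteq A$, evidently to ensure $\widetilde f_C\in C_G(\widetilde f_A)$ --- but you already observed that \emph{all} of $\widetilde C(X;\mathbb Z_2)$ centralizes $\widetilde f_A$, so this constraint is unnecessary and is precisely what generates the cascade of subcases (whether $\suppg(s)$ meets $A$, whether $B\cap A$ is $\varnothing$ or $A$, switching to permutations supported in $A$ or in $X\setminus A$, \ldots) that you yourself flag as ``the crux''. Those subcases can be pushed through, but the last one (permutations supported in $X\setminus A$, relying on ``non-atomicity'') is left as a gesture rather than an argument.

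The paper's treatment of $g\widetilde f_B$ with $g\ne e$ is a single stroke: pick any $x\in\suppg(g)$ and $m$ with $g\in S(2^m)$, let $B_n$ be the cylinder of level $m+n$ around $x$, and conjugate by $\widetilde f_{B_n}\in C_G(\widetilde f_A)$. Since $g$ moves the level-$m$ cylinder containing $x$, the sets $B_n$ and $g(B_n)$ are disjoint and shrink with $n$, so the conjugates $\widetilde f_{B_n\Delta g(B_n)}\cdot g\widetilde f_B$ are pairwise distinct. No reference to the position of $A$ relative to $\suppg(g)$ is needed. Once you drop the restriction $C\subseteq A$, your entire case apparatus for $s\ne e$ collapses to this one line.
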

\begin{proof}
	Consider first an element $g\tilde{f}_B\in G$ with $g\in S(2^\infty),$ $g\neq e$, $B \subseteq X$ clopen. Let $x=(x_1,x_2,x_3,\ldots)\in X=\{0,1\}^\infty$ be such that $gx\neq x$. There exists $m\in \N$ such that $g\in S(2^m)$ and $g((x_1,\ldots,x_m))\neq ((x_1,\ldots,x_m))$. For $n\in\mathbb N$ let $B_n=[(x_1,\ldots,x_{n+m})]$ (a cylinder set of level $n+m$, see \eqref{equation: cylinder sets}). Then the elements $$\widetilde f_{B_n}\cdot g\widetilde f_B\cdot\widetilde f_{B_n}=\widetilde f_{B_n\Delta g(B_n)}\cdot g\widetilde f_B$$ are pairwise distinct. Since the elements $\widetilde f_{B_n}$ belong to the centralizer of $\widetilde f_A$ we obtain that $g\widetilde f_B\notin \fpc(\widetilde f_A)$. 
	
	Consider now an element $\widetilde f_B$ such that $B\notin\{\varnothing, X, A,X\setminus A\}$. Let $h_n$ be the sequence of elements from Lemma \ref{lemma: tilde f_A separation}. Then $h_n\in C_G(\widetilde f_A)$ and the elements $h_n\widetilde f_B h_n^{-1}=\widetilde f_{h_n(B)}$ are pairwise distinct. Thus, $\widetilde f_B\notin \fpc(\widetilde f_A)$.
\end{proof}
We now focus on computing the \fpc(s) for an involution $s$.
\begin{proposition}\label{prop: fpc of involution in S(2^infty)}
Let $s\in S(2^\infty)\subseteq G$ be an involution. Then $$\fpc(s)=\{e,s,\widetilde f_{\suppg(s)},s\widetilde f_{\suppg(s)}\}.$$
\end{proposition}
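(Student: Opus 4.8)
The plan is to compute $\fpc(s)$ for an involution $s \in S(2^\infty)$ by first establishing the inclusion $\supseteq$, which is easy, and then ruling out all other elements of $G$ one class at a time, using the same kind of "spreading" argument as in the proof of Lemma~\ref{lemma: fpc widetilde f_A}. Write $U = \suppg(s)$; note that $U$ is a clopen, $s$-invariant subset of $X$, and since $s$ acts by a non-trivial permutation depending on finitely many coordinates, $U$ is a non-trivial clopen set that is "large" in the sense that it contains many small cylinders, and moreover it splits into (at least one) $2$-cycle of $s$.

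\textbf{The easy inclusion.} First I would check that $\{e, s, \widetilde f_U, s\widetilde f_U\} \subseteq \fpc(s)$. Clearly $e, s \in \fpc(s)$ since both are fixed by conjugation by $C_G(\{s\})$. For $\widetilde f_U$: if $t \in C_G(\{s\})$ and $t = g\widetilde f_B$ with $g \in S(2^\infty)$, then $g$ commutes with $s$ in $S(2^\infty)$ (projecting the commutation relation $ts = st$ onto the $S(2^\infty)$-quotient), hence $g$ preserves $U = \suppg(s) = \suppg(s)$ — because $\suppg(gsg^{-1}) = g(\suppg(s))$ and $gsg^{-1} = s$. Therefore $t \widetilde f_U t^{-1} = \widetilde f_{g(U)} = \widetilde f_U$, so the conjugacy class of $\widetilde f_U$ under $C_G(\{s\})$ is a singleton; similarly $s\widetilde f_U$ has a singleton (or at most finite) conjugacy class under $C_G(\{s\})$, since $t(s\widetilde f_U)t^{-1} = (tst^{-1})\widetilde f_U = s\widetilde f_U$ for $t \in C_G(\{s\})$. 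So the four listed elements all lie in $\fpc(s)$.

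\textbf{The hard inclusion.} The main work is showing $\fpc(s) \subseteq \{e, s, \widetilde f_U, s\widetilde f_U\}$. I would argue by cases on an arbitrary $g\widetilde f_B \in G$ with $g \in S(2^\infty)$, $B \subseteq X$ clopen. \emph{Case 1: $g \notin \{e, s\}$.} Here I want to produce infinitely many distinct conjugates using centralizing elements. The natural candidates are $\widetilde f_C$ for clopen $C$ with $C = s(C)$ (these centralize $s$), chosen so that conjugation moves $g\widetilde f_B$ around. Conjugating $g\widetilde f_B$ by such $\widetilde f_C$ gives $\widetilde f_{C \Delta g(C)}\, g\widetilde f_B$, so I need infinitely many $s$-invariant cylinders-unions $C$ with $C \Delta g(C)$ all distinct; since $g \ne e$, pick a point $x$ with $gx \ne x$, and since $g \ne s$ as well, I can arrange the moved point to avoid the "synchronization" that $s$ imposes — concretely, take small cylinders around a point where $g$ acts but which can be paired with their $s$-images to form $s$-invariant sets $C_n$ of growing depth, forcing $C_n \Delta g(C_n)$ to be pairwise distinct (this is where one uses that $U$ is genuinely large and that $g$ and $s$ differ). \emph{Case 2: $g = e$, i.e.\ the element is $\widetilde f_B$.} If $B \in \{\varnothing, X, U, X\setminus U\}$ then $\widetilde f_B \in \{e, \widetilde f_U\}$, done; otherwise, since $B \notin \{\varnothing, X, U, X\setminus U\}$, apply Lemma~\ref{lemma: tilde f_A separation} with $A = U$ to get elements $h_n \in S(2^\infty)$ with $h_n(U) = U$ — hence $h_n \in C_G(\{s\})$, because $h_n$ preserving $U$ pointwise-as-a-set is not enough, one needs $h_n s h_n^{-1} = s$; so I would instead strengthen the use of the lemma, choosing the transpositions $h_n$ to swap cylinders that lie in a single $s$-orbit region so that $h_n$ literally commutes with $s$ — and then $h_n \widetilde f_B h_n^{-1} = \widetilde f_{h_n(B)}$ are pairwise distinct. \emph{Case 3: $g = s$, i.e.\ the element is $s\widetilde f_B$.} Reduce to Case 2 by multiplying by $s\widetilde f_U \in \fpc(s)$ (or directly: $s\widetilde f_B$ is finitely permuted by $C_G(\{s\})$ iff $\widetilde f_{B}$ is, up to the fixed factor, so iff $B \in \{\varnothing, X, U, X\setminus U\}$); conjugating $s\widetilde f_B$ by $h_n \in C_G(\{s\})$ from the modified lemma gives $s\widetilde f_{h_n(B)}$, again pairwise distinct unless $B$ is one of the four special sets.

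\textbf{Main obstacle.} The delicate point — and the one I would spend the most care on — is the centralizer computation: identifying $C_G(\{s\})$ precisely enough to know which $\widetilde f_C$ and which permutations $h$ actually commute with $s$, and then arranging the "spreading" elements to live inside $C_G(\{s\})$ while still moving $g\widetilde f_B$ to infinitely many places. Unlike Lemma~\ref{lemma: fpc widetilde f_A}, where the centralizer of $\widetilde f_A$ is large and easy to describe, the centralizer of an involution $s$ in $S(2^\infty)$ is more constrained, so the transpositions used in the Lemma~\ref{lemma: tilde f_A separation}-type argument must be chosen to respect the $2$-cycle structure of $s$; verifying that enough such transpositions exist (which again uses that non-trivial clopen sets in $X$ contain arbitrarily deep cylinders, so $U$ contains infinitely many $s$-orbits or at least one $s$-orbit that can be refined) is the crux.
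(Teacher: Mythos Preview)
Your overall strategy matches the paper's, but Case 1 has a genuine gap. You propose to handle every $g \notin \{e, s\}$ by conjugating with abelian elements $\widetilde f_C$ for $s$-invariant clopen $C$, obtaining conjugates $g\widetilde f_{C \Delta g^{-1}(C) \Delta B}$; this requires $C \Delta g(C)$ to be non-trivial for infinitely many such $C$. But consider $g$ a proper ``sub-involution'' of $s$: that is, $\suppg(g) \subsetneq \suppg(s)$ and $g|_{\suppg(g)} = s|_{\suppg(g)}$ (so $g \ne e$, $g \ne s$, and $gs = sg$). Then \emph{every} $s$-invariant clopen set $C$ is automatically $g$-invariant (for $x \in C \cap \suppg(g)$ one has $gx = sx \in C$; outside $\suppg(g)$, $g$ fixes $x$), so $C \Delta g(C) = \varnothing$ and conjugation by any such $\widetilde f_C$ fixes $g\widetilde f_B$. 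Your remark that ``since $g \ne s$ I can arrange to avoid the synchronization that $s$ imposes'' is exactly where this breaks: here $g$ is perfectly synchronized with $s$ on its support, and no choice of $C$ helps. The paper treats this situation (its case c2) by using \emph{permutations} in $C_G(s)$ instead: it takes transpositions $u_n$ swapping a shrinking cylinder in $\suppg(g)$ with one in $\suppg(s) \setminus \suppg(g)$, sets $h_n = u_n \cdot (s u_n s^{-1})$, checks that $h_n$ commutes with $s$, and verifies that the $h_n g h_n^{-1}$ are pairwise distinct.

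Two smaller points. First, you assert $U = \suppg(s)$ is always a non-trivial clopen set, but the non-trivial element of $S(2) \subset S(2^\infty)$ has $\suppg(s) = X$; this does not break the statement (then $\widetilde f_U = e$ and the claimed set collapses to $\{e, s\}$), but your argument should not rely on $U \ne X$. Second, in Case 2 you correctly note that Lemma~\ref{lemma: tilde f_A separation} only gives $h_n(U) = U$, not $h_n s = s h_n$, and you propose to strengthen the construction; this is the right idea, but be aware that the paper needs three separate sub-cases here (depending on whether $s(B) \ne B$, or $s(B)=B$ with $B \cap \suppg(s)$ a proper non-empty subset of $\suppg(s)$, or $B \cap \suppg(s) \in \{\varnothing, \suppg(s)\}$), each requiring a different explicit $h_n$ built to commute with $s$.
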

\begin{proof}
Consider $g \in S(2^\infty)$ and $A\subseteq X$ clopen such that $g\widetilde f_A\notin\{e,s,\widetilde f_{\suppg(s)},s\widetilde f_{\suppg(s)}\}$. We will construct a sequence of elements $h_n\in C_G(s)$ such that the elements $h_n\cdot g\widetilde f_A\cdot h_n^{-1}$ are pairwise distinct, which would imply that $g\widetilde f_A\notin \fpc(s)$. Take $m\in \N$ such that $g,s\in S(2^m)$ and $A$ is a union of cylinders of level $m$. Several cases are possible.\vskip 0.1cm\noindent
	${\bf a})$ $gs\neq sg$. Then, as $s$ is the product of disjoint transpositions, there exists $$x=(x_1,x_2,x_3,\ldots)\in \suppg(g)\cap \suppg(s)$$ such that $y=(y_1,y_2,y_3,\ldots):=sx\neq gx$. For $n\in\mathbb N$ let $h_n\in S(2^{n+m})$ be the transposition of $(x_1,\ldots,x_{n+m})$ and $(y_1,\ldots,y_{n+m})$. Then $h_ngh_n^{-1}\in S(2^{n+m})$ and $$h_ngh_n^{-1}((y_1,
	\ldots,y_{n+m}))=g((x_1,
	\ldots,x_{n+m})).$$ Let $k>n$ and $z\in [(y_1,\ldots,y_{n+m})]\setminus [(y_1,\ldots,y_{k+m})]$ (difference of cylinders of levels $n+m$ and $k+m$ around $y$, see \eqref{equation: cylinder sets}). Observe that by the choice of $x$ and $m$ the cylinders $[(x_1,\ldots,x_{n+m})]$, $[(y_1,\ldots,y_{n+m})]$, and $g([(x_1,\ldots,x_{n+m})])$ are pairwise disjoint. 
	Then 
	$h_n^{-1}(z)\in[(x_1,\ldots,x_{n+m})],$ 
	$h_k^{-1}(z)=z$ and so $$h_ngh_n^{-1}(z)\in g([(x_1,\ldots,x_{n+m})]),\;\; h_kgh_k^{-1}(z)\in g([(y_1,\ldots,y_{n+m})])\cup\suppg(h_k).$$ It follows that $h_ngh_n^{-1}(z)\neq h_kgh_k^{-1}(z)$. We obtain that $h_n g\widetilde f_Ah_n^{-1}\neq h_k g \widetilde f_Ah_k^{-1}$, and elements $h_n$ clearly commute with $s$. \vskip 0.1cm\noindent 
	${\bf b)}$ $gs=sg$ and  $\suppg(g)\setminus \suppg(s)\neq\varnothing$. Let $x\in\suppg(g)\setminus\suppg(s)$. Let $h_n\in S(2^{n+m+1})$ be the transposition of $(x_1,\ldots,x_{n+m},0)$ and $(x_1,\ldots,x_{n+m},1)$, which clearly commutes with $s$. Then for $k>n$ and $z\in [(x_1,\ldots,x_{n+m})]\setminus [(x_1,\ldots,x_{k+m})]$ one has  $h_kg(z) = h_n g(z) = g(z)$. Indeed, since $g\in S(2^m)$ and $x\in \suppg(g)$ we have that $g$ changes at least one of the first $m$ coordinates of $x$. Thus, $g[(x_1,\ldots,x_m)]\cap [(x_1,\ldots, x_m)]=\varnothing$. It follows that $gz\notin [(x_1,\ldots, x_m)]$ and does not belong to the support of either $h_n$ or $h_k$. This implies in turn that  $h_kgh_k^{-1}(z)=g(z)\neq gh_n(z)=h_ngh_n^{-1}(z)$, since $z$ and $h_n(z)$ have different $(n+m+1)$-st coordinate. Therefore, $h_n\widetilde gf_Ah_n^{-1}\neq h_k\widetilde  gf_Ah_k^{-1}$.
	\vskip 0.1cm\noindent
	${\bf c)}$ $gs=sg$, $\suppg(g)\subset\suppg(s)$, and $g\notin \{e,s\}$. The following two subcases are possible. 
	\vskip 0.1cm\noindent 
	${\bf c1)}$ $gx\neq sx$ for some $x\in\suppg(g)$. Let $u_n\in S(2^{n+m+1})$ denote the transposition of $(x_1,\ldots,x_{n+m},0)$ and $(x_1,\ldots,x_{n+m},1)$. Set $h_n=u_nsu_ns^{-1}$. Since $$s([(x_1,\ldots,x_m)])\cap [(x_1,\ldots,x_m)]=\varnothing$$ we have that $u_n$ and $su_ns^{-1}$ commute. Therefore, $h_n$ commutes with $s$. Arguing in the same way as in the case $b)$, we obtain that the elements $h_n g\widetilde f_Ah_n^{-1}$ are pairwise distinct for $n\in\mathbb N$.
	\vskip 0.1cm\noindent 
	${\bf c2)}$  $gx=sx$ for all $x\in\suppg(g)$. Then, as $g \neq e$, $g \neq s$, there  exist $x,y\in X$ such that $gx=sx\neq x$, $gy=y\neq sy$. For $n\in\mathbb N$ let $u_n\in S(2^{n+m})$ be the transposition of $(x_1,\ldots,x_{n+m})$ and $(y_1,\ldots,y_{n+m})$. Set $h_n=u_n\cdot (su_ns^{-1})$. Let $z\in [(x_1,\ldots,x_{n+m})]\setminus [(x_1,\ldots,x_{k+m})]$. Then $h_ngh_n^{-1}(z)=z\neq s(z)=h_kgh_k^{-1}(z)$. Once again, $h_n\widetilde gf_Ah_n^{-1}\neq h_k\widetilde  gf_Ah_k^{-1}$.\vskip 0.1cm\noindent
	${\bf d)}$ $g\in \{e,s\}$. It is not hard to see that $s\widetilde f_A\in \fpc(s)$ if and only if $\widetilde f_A\in \fpc(s)$. Therefore, without loss of generality in this case, we may assume that $g=e$. 
	\vskip 0.1cm\noindent 
	${\bf d1)}$ $s(A)\neq A$. Let $x\in A\setminus s(A)$. Let $h_n\in S(2^{n+m})$ be the transposition of $(x_1,\ldots,x_{n+m})$ and $s((x_1,\ldots,x_{n+m}))$, commmuting with $s$. Then $h_n(A)\setminus A=s[(x_1,\ldots,x_{n+m})]$.
	\vskip 0.1cm\noindent 
	${\bf d2)}$  $s(A)=A$, $A\cap\suppg(s)\notin\{\varnothing,\suppg(s)\}$. Let $x\in A\cap \suppg(s)$, $y\in \suppg(s)\setminus A$.  Let $u_n\in S(2^{n+m})$ be the transposition of $(x_1,\ldots,x_{n+m})$ and $(y_1,\ldots,y_{n+m})$. Set $h_n=u_n\cdot (s u_ns^{-1})$; one can check that $h_n s = sh_n$. Then $h_n(A)\setminus A=[(y_1,\ldots,y_{n+m})]\cup s[(y_1,\ldots,y_{n+m})]$. 
	\vskip 0.1cm\noindent
	${\bf d3)}$ $A\cap \suppg(s)\in \{\varnothing,\suppg(s)\}$. Our initial assumptions imply that $A\notin\{\varnothing,X,\suppg(s),X\setminus\suppg(s)\}$. We obtain that there exist points $x\in A\setminus\suppg(s),y\in X\setminus (A\cup \suppg(s))$. Let $h_n\in S(2^{n+m})$ be the transposition of $(x_1,\ldots,x_{n+m})$ and $(y_1,\ldots,y_{n+m})$. Then $h_n(A)\setminus A=[(y_1,\ldots,y_{n+m})]$.
	\vskip 0.1cm\noindent
	In the latter three cases we obtain that the elements $h_n\widetilde f_A h_n^{-1}=\widetilde f_{h_n(A)}$ are pairwise distinct for $n\in\mathbb N$. This finishes the proof. 
\end{proof}
In general, computing the set $fpc(s\widetilde f_A)$ is rather complicated, as the next two remarks show.
\begin{remark}\label{rem: fpc s widetilde f_A}
	Let $s\in S(2^\infty)$ be an involution and let $A\subseteq X$ be a clopen set such that $s(A)\cap A=\varnothing$. Let $g\in S(2^\infty)$ be such that $g(x)=s(x)$ for all $x\in A\cup s(A)$ and $g(x)=x$ othervise. Then $g\in\fpc(s\widetilde f_A)$. Thus, in general, it is not true that $\fpc(s\widetilde f_A)\subseteq s\cdot \widetilde C(X;\mathbb Z_2)\cup \widetilde C(X;\mathbb Z_2)$.
\end{remark}
\begin{remark}\label{rem: fpc s widetilde f_supp(s)}
	Let $s\in S(2^\infty)$ be an involution. Then $C_G(s)=C_G(s\widetilde f_{\suppg(s)})$. Therefore, $\fpc(s\widetilde f_{\suppg(s)})=\fpc(s)$.
\end{remark}
\subsection*{Invariant von Neumann subalgebras of $L(S(2^\infty)\ltimes \widetilde C(X;\mathbb{Z}_2))$}
In this subsection, we fix an invariant von Neumann subalgebra $\mathcal{M}\subseteq L(G)$, and denote by $E:L(G) \to \mathcal{M}$ the unique $\tau$-preserving conditional expectation. In many ways, $\widetilde C(X;\mathbb Z_2)$ governs the algebraic structure of $\mathcal{M}$ in the sense that either it has a trivial intersection with $\mathcal{M}$ or is completely contained inside $\mathcal{M}$.
\begin{proposition}\label{prop: M intersect L(widetilde C(X))}
We have either $\mathcal M\cap L(\widetilde C(X;\mathbb{Z}_2))=\mathbb C 1$ or $\mathcal M\supseteq L(\widetilde C(X;\mathbb{Z}_2))$.
\end{proposition}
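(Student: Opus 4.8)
My plan is to reduce the statement, using the $\fpc$-computations already available, to the purely combinatorial fact that $\widetilde C(X;\mathbb Z_2)$ has no $S(2^\infty)$-invariant subgroups other than $\{e\}$ and itself, and then to prove that fact inside the finite symmetric groups $S(2^n)$. First I would record how $E$ acts on the generators $u_{\widetilde f_A}$. Fix a non-trivial clopen $A\subseteq X$; by Lemma~\ref{lemma: fpc widetilde f_A} we have $\fpc(\widetilde f_A)=\{e,\widetilde f_A\}$, so Lemma~\ref{lemma: fpc application} gives $E(u_{\widetilde f_A})\in L(\{e,\widetilde f_A\})=\C 1+\C u_{\widetilde f_A}$. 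Since $E$ is trace preserving and $\tau(u_{\widetilde f_A})=0$, the scalar component vanishes, so $E(u_{\widetilde f_A})=\lambda_A u_{\widetilde f_A}$ for some $\lambda_A\in\C$; applying $E$ once more and using $E\circ E=E$ forces $\lambda_A^2=\lambda_A$, whence $\lambda_A\in\{0,1\}$. Put $H:=\{a\in\widetilde C(X;\mathbb Z_2):u_a\in\mathcal M\}$; since $u_au_b=u_{ab}$ and $\mathcal M$ is a $G$-invariant algebra, $H$ is a subgroup of $\widetilde C(X;\mathbb Z_2)$ invariant under the conjugation (equivalently, permutation) action of $S(2^\infty)$, and $H=\{e\}\cup\{\widetilde f_A:A\text{ non-trivial},\ \lambda_A=1\}$.

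Next I would identify $\mathcal M\cap L(\widetilde C(X;\mathbb Z_2))$. The extension of $E$ to $L^2(G)$ is the orthogonal projection onto $L^2(\mathcal M)$, so reading off Fourier coefficients from $E(u_a)=\lambda_a u_a$ shows that $E$ restricted to $L(\widetilde C(X;\mathbb Z_2))$ is exactly the canonical trace-preserving conditional expectation onto $L(H)$; hence $\mathcal M\cap L(\widetilde C(X;\mathbb Z_2))=\{x\in L(\widetilde C(X;\mathbb Z_2)):E(x)=x\}=L(H)$. It therefore suffices to show that $H$ is either $\{e\}$ (giving $\mathcal M\cap L(\widetilde C(X;\mathbb Z_2))=\C 1$) or all of $\widetilde C(X;\mathbb Z_2)$ (giving $L(\widetilde C(X;\mathbb Z_2))\subseteq\mathcal M$).

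For the combinatorial core, suppose $H\neq\{e\}$ and fix a non-trivial clopen $B_0$ with $\widetilde f_{B_0}\in H$; let $N$ be a level such that $B_0$ is a union of cylinders of level $N$, so $B_0$ consists of between $1$ and $2^N-1$ of them. For each $n>N$, $B_0$ is a union of an even number $w$ of level-$n$ cylinders with $2\le w\le 2^n-2$, and $S(2^n)$, acting as the full symmetric group of the $2^n$ level-$n$ cylinders, carries $\widetilde f_{B_0}$ onto $\widetilde f_C$ for every union $C$ of $w$ of them; since $H$ is $S(2^n)$-invariant, all these lie in $H$. The product (symmetric difference) of two such elements sharing $w-1$ cylinders is $\widetilde f_D$ with $D$ a union of exactly two level-$n$ cylinders, and sums of such two-cylinder elements exhaust all $\widetilde f_C$ with $C$ a union of an even number of level-$n$ cylinders; hence $H$ contains all of these, for every $n>N$. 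Finally, any non-trivial clopen $C$ becomes a union of an even number of level-$n$ cylinders once $n$ is large enough (the count doubles whenever the level rises by one), whence $\widetilde f_C\in H$; therefore $H=\widetilde C(X;\mathbb Z_2)$.

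The main obstacle is precisely this last step: at a fixed level $n$ the $S(2^n)$-orbit of $\widetilde f_{B_0}$ generates only the \emph{proper} even-weight subgroup, so one genuinely needs the inductive-limit structure of $S(2^\infty)$ — in which this parity obstruction is level-dependent and evaporates in the limit — to force $H$ to be everything. (Alternatively, this step can be run through the spreading Lemma~\ref{lemma: tilde f_A separation}, first isolating a two-cylinder set, but the passage to higher levels remains unavoidable.) The remaining ingredients — the $\|\cdot\|_2$-continuity argument behind $\mathcal M\cap L(\widetilde C(X;\mathbb Z_2))=L(H)$, and the facts that $H$ is a subgroup and $S(2^\infty)$-invariant — are routine.
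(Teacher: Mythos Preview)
Your proof is correct and takes a cleaner route than the paper's. The paper does not use the $\fpc$-computation of Lemma~\ref{lemma: fpc widetilde f_A} at this point; instead, starting from an arbitrary non-scalar $m\in\mathcal M\cap L(\widetilde C(X;\mathbb Z_2))$, it fixes some $\widetilde f_A\in\suppe(m)$ and uses the spreading Lemma~\ref{lemma: tilde f_A separation} together with an iterated weak-limit and diagonal-subsequence argument on conjugates $u_{h_n}mu_{h_n}^{-1}$ to successively annihilate the other Fourier coefficients, thereby exhibiting $u_{\widetilde f_A}\in\mathcal M$. Your approach bypasses this analytic extraction entirely: once Lemma~\ref{lemma: fpc widetilde f_A} and Lemma~\ref{lemma: fpc application} give $E(u_{\widetilde f_A})\in\{0,u_{\widetilde f_A}\}$ for every non-trivial clopen $A$, the identification $\mathcal M\cap L(\widetilde C(X;\mathbb Z_2))=L(H)$ is immediate, and the problem becomes the purely group-theoretic fact that $\widetilde C(X;\mathbb Z_2)$ admits no proper nontrivial $S(2^\infty)$-invariant subgroup. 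Your level-doubling argument for that closing step (forcing even weight and then generating the even-weight subgroup at each level via two-cylinder symmetric differences) is made explicit, whereas the paper leaves the analogous passage from a single $u_{\widetilde f_A}\in\mathcal M$ to all of $L(\widetilde C(X;\mathbb Z_2))$ essentially implicit. The paper's weak-limit technique has the merit of being a reusable template---it reappears almost verbatim in Proposition~\ref{prop: M and L(Z2infty)}, where no analogous $\fpc$-shortcut is in place at that stage---but for the present proposition your route is shorter and more transparent.
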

\begin{proof}
Assume that $\mathcal{M} \cap L(\widetilde C(X;\mathbb{Z}_2)) \ne \mathbb{C}1$. Then there exists a non-scalar element $m \in \mathcal{M} \cap L(\widetilde C(X;\mathbb{Z}_2))$. Expand $m$ in its Fourier series as
\begin{equation}\label{eq:fourier_expansion}
m = \sum_{\widetilde f \in \widetilde C(X;\mathbb{Z}_2)} m_{\widetilde f} \cdot \delta_{\widetilde f}.
\end{equation} Subtracting $m_e\cdot 1$ from $m$ we may assume that $m_e=0$. 
Since $m \notin \mathbb{C}1$, there exists a nontrivial clopen set $A \subsetneq X$ such that $m_{\widetilde f_A} \ne 0$. For convenience, normalize $m$ so that $m_{\widetilde f_A} = 1$. Assume that $m\neq \delta_{\widetilde f_A}$. Choose $\widetilde f_B \in \text{supp}(m)$ with $\widetilde f_B \ne \widetilde f_A$ such that
\[
|m_{\widetilde f_B}| = \max\{|m_{\widetilde f}| : \widetilde f \ne \widetilde f_A\}.
\]
Using  Lemma \ref{lemma: tilde f_A separation} choose a sequence $(h_n)_{n \in \mathbb{N}} \subset S_{2^\infty}$ such that
\begin{itemize}
    \item $h_n^{-1}(A) = A$ for all $n$,
    \item the sets $h_n^{-1}(B)$ are pairwise distinct (modulo complements).
\end{itemize}
The Fourier coefficients of $ u_{h_n} m u_{h_n}^{-1} \in \mathcal{M}$ satisfy
\[ (u_{h_n} m u_{h_n}^{-1})_{\widetilde f} = m_{h_n^{-1}\widetilde f h_n}.\]
Passing to a subsequence, we may assume that there exists a limit in the weak operator topology $m' = \mathrm{w\text{-}lim}_{n \to \infty}u_{h_n} m u_{h_n}^{-1}\in\mathcal M$. Clearly, $m'_{\widetilde f_A}=m_{\widetilde f_A}=1$. Since $\{m_{\widetilde f}:\widetilde f\in \widetilde C(X;\mathbb{Z}_2)\}$ is square-summable, $$m_{\widetilde f_B}=\lim\limits_{n\to \infty}m_{\widetilde f_{h_n^{-1}(B)}}=0.$$

Fix for the moment the sequence $(h_n)_{n=1}^\infty$. Next, let us order all elements $\widetilde f\in \widetilde C(X;\mathbb{Z}_2)$ in an arbitrary way: $$\widetilde C(X;\mathbb{Z}_2)=\{\widetilde f_1,\widetilde f_2,\widetilde f_3,\ldots\}.$$ Set $S_0 = \mathbb{N}$. By induction, we will construct an increasing sequence $(n_k)_{k \in \N}$ of positive integers and a decreasing sequence of infinite subsets $S_k\subseteq\mathbb N$ such that the following holds for each $k\in \N$ 
	\begin{itemize}
		\item[$1)$] $n_k\in S_{k-1}$;
		\item[$2)$] $\min S_k>n_k$;
		\item[$3)$] either $h_j^{-1}\widetilde f_kh_j$ does not depend on $j$ for $j\in S_k$ or $h_j^{-1}\widetilde f_k h_j$ are pairwise distinct for $j\in S_k$.
	\end{itemize} 
The construction of the starting pair, $n_1$ and $S_1$, can proceed as below, so we will not repeat it. Assume that for a fixed $k \in \N$ we have constructed $n_1,\ldots,n_k$ and $S_1,\ldots, S_k$ satisfying the inductive hypothesis. If the number of distinct elements in $\{h_j^{-1} \widetilde f_{k+1} h_j:j\in S_k\}$ is infinite there exists an infinite subset $S'_{k+1}\subseteq S_k$ such that $h_j^{-1}\widetilde f_{k+1} h_j$ are pairwise distinct for $j\in S_{k+1}'$. Otherwise, there exists an infinite subset $S_{k+1}'\subseteq S_k$ such that $h_j^{-1}\widetilde f_{k+1} h_j$ are all equal for $j\in S_{k+1}'$. In both cases, set $$n_{k+1}=\min S_{k+1}',\;\;S_{k+1}=S'_{k+1}\setminus \{n_{k+1}\}.$$  The inductive hypothesis is satisfied for $k+1$ by construction.

Now, replace $h_n$ with the  subsequence $(h_{n_k})_{k \in \N}$. Then for any element $\widetilde f\in\widetilde C(X;\mathbb{Z}_2)$ the following is true: either  $h_n^{-1}\widetilde f h_n$ becomes constant for large $n\in \N$ or  the elements $\{h_n^{-1}\widetilde f h_n:n\geqslant N\}$ are pairwise distinct for $n\geqslant N$ for sufficiently large $N\in \N$. Recall that $m'$ is the weak limit of $u_{h_n}mu_{h_n}^*$. In particular, $m'_{\widetilde f}=\lim_{n \to \infty} m_{h_n^{-1}\widetilde fh_n}$ for any $\widetilde f$. We obtain for each $\widetilde f$ that either  $m'_{\widetilde f}=m_{h_n\widetilde f h_n^{-1}}$ for large enough $n$ or $m'_{\widetilde f}=0$. 
	
By performing the operation $m\to m'$ repeatedly starting with $m$ we obtain a sequence of elements $m^{(n)}\in \mathcal M\cap L(F_2^\infty)$ such that $m^{(n)}_{\widetilde f_A}=m_{\widetilde f_A}=1$ for all $n$ and for any $\widetilde f\neq \widetilde f_A$ one has $m^{(n)}_{\widetilde f}=0$ for all sufficiently large $n$. It follows that $m^{(n)}$ converges weakly to $\widetilde f_A$. Therefore, $\widetilde f_A\in \mathcal M$.

Since $\delta_{\widetilde f_A} \in \mathcal{M}$ and $\mathcal{M}$ is invariant under conjugation by $G = S(2^\infty) \ltimes \widetilde C(X;\mathbb{Z}_2)$, we get:
\begin{equation}\label{eq:closure_under_conjugation}
\delta_{g(\widetilde f_A)} = u_g \delta_{\widetilde f_A} u_g^{-1} \in \mathcal{M}, \quad \forall g \in S(2^\infty).
\end{equation}
The set $\{\delta_{\widetilde f_C} : C \subseteq X \text{ clopen, nontrivial}\}$ spans a dense *-subalgebra of $L(\widetilde C(X;\mathbb Z_2))$. Hence,
\begin{equation}\label{eq:algebra_generated}
L(\widetilde C(X;\mathbb Z_2)) \subseteq \mathcal{M}.
\end{equation}
This concludes the proof.
\end{proof}
Moreover, it follows immediately from Lemma \ref{lemma: fpc application} and Lemma \ref{lemma: fpc widetilde f_A} that under the canonical conditional expectation $E$, $L(\widetilde C(X;\mathbb Z_2))$ remains preserved. The following proposition makes it precise.
\begin{proposition}\label{cor: E(widetilde C(X))}
	One has $E(L(\widetilde C(X;\mathbb Z_2)))\subseteq L(\widetilde C(X;\mathbb Z_2))$.
\end{proposition}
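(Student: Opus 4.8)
The plan is to reduce the statement to the two lemmas on $\fpc$ already at our disposal, and then to upgrade from generators to the whole algebra using normality of the conditional expectation. Concretely, recall that $L(\widetilde C(X;\mathbb Z_2))$ is generated, as a von Neumann algebra, by the unit $1=u_e$ together with the unitaries $u_{\widetilde f_A}$, where $A$ ranges over the non-trivial clopen subsets of $X$; equivalently, the linear span of these unitaries is ultraweakly dense in $L(\widetilde C(X;\mathbb Z_2))$. So it suffices to check that $E$ sends each of these generators into $L(\widetilde C(X;\mathbb Z_2))$ and then to take limits.

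The first step is the computation of the images of the generators. For a non-trivial clopen $A\subseteq X$, Lemma \ref{lemma: fpc widetilde f_A} gives $\fpc(\widetilde f_A)=\{e,\widetilde f_A\}$, which is a (two-element) subgroup of $\widetilde C(X;\mathbb Z_2)$ since $\widetilde f_A^2=e$. Hence Lemma \ref{lemma: fpc application} yields
\[
E(u_{\widetilde f_A})\in L(\fpc(\widetilde f_A))=L(\{e,\widetilde f_A\})\subseteq L(\widetilde C(X;\mathbb Z_2)),
\]
and of course $E(1)=1\in L(\widetilde C(X;\mathbb Z_2))$. By linearity, $E$ maps the linear span $\mathcal A_0$ of $\{u_g: g\in \widetilde C(X;\mathbb Z_2)\}$ into $L(\widetilde C(X;\mathbb Z_2))$.

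The final step is to pass from $\mathcal A_0$ to its ultraweak closure $L(\widetilde C(X;\mathbb Z_2))$. Since $E$ is the $\tau$-preserving conditional expectation onto a von Neumann subalgebra with respect to the faithful normal trace $\tau$, it is normal, hence ultraweakly continuous; and $L(\widetilde C(X;\mathbb Z_2))$ is ultraweakly closed in $L(G)$. Given $x\in L(\widetilde C(X;\mathbb Z_2))$, choose a net $(x_i)\subseteq\mathcal A_0$ with $x_i\to x$ ultraweakly; then $E(x_i)\to E(x)$ ultraweakly, and as each $E(x_i)\in L(\widetilde C(X;\mathbb Z_2))$, we conclude $E(x)\in L(\widetilde C(X;\mathbb Z_2))$. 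This gives $E(L(\widetilde C(X;\mathbb Z_2)))\subseteq L(\widetilde C(X;\mathbb Z_2))$. There is no real obstacle in this argument: the entire content sits in Lemma \ref{lemma: fpc widetilde f_A}, which has already been proved; the only point worth stating explicitly is the normality of $E$, needed to propagate the inclusion from the dense $*$-subalgebra $\mathcal A_0$ to all of $L(\widetilde C(X;\mathbb Z_2))$.
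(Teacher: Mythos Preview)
Your proof is correct and follows exactly the approach the paper indicates: the paper simply states that the result follows immediately from Lemma~\ref{lemma: fpc application} and Lemma~\ref{lemma: fpc widetilde f_A}, and you have spelled out precisely those details together with the routine density/normality argument. There is nothing to add.
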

We now examine the image of an involution under the canonical conditional expectation $E$.
\begin{proposition}
	Let $s\in S(2^\infty)\subseteq G$ be an involution. Then $$E(s)\in\{s,\tfrac{1}{2}s(\id+\widetilde f_{\suppg(s)}),\tfrac{1}{2}s(\id-\widetilde f_{\suppg(s)})\}\cup \mathbb C s\widetilde f_{\suppg(s)}.$$
\end{proposition}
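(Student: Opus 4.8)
The starting point is Lemma~\ref{lemma: fpc application} combined with Proposition~\ref{prop: fpc of involution in S(2^infty)}: since $\fpc(s)=\{e,s,\widetilde f_{\suppg(s)},s\widetilde f_{\suppg(s)}\}$, we automatically have $E(s)\in L(\{e,s,\widetilde f_{\suppg(s)},s\widetilde f_{\suppg(s)}\})$, so we may write $E(s)=\alpha\, e+\beta\, s+\gamma\,\widetilde f_{\suppg(s)}+\delta\, s\widetilde f_{\suppg(s)}$ for scalars $\alpha,\beta,\gamma,\delta$. The task is to pin down these four coefficients, and the whole game is played with the relations $sE(s)s^{-1}=E(s)$ (Proposition~\ref{prop: E properties}(2), using $s^2=e$), the trace relation $\tau(s^{-1}E(s))=\beta$ together with Proposition~\ref{prop: E properties}(4)--(5), and conjugation by well-chosen group elements in the centralizer of $s$.

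First I would exploit $sE(s)s=E(s)$. Writing $t=\widetilde f_{\suppg(s)}$, one has $sts=t$ (the support of $s$ is $s$-invariant), $s\cdot s\cdot s=s$, and $s(st)s=ts=st$ (since $s$ and $t$ commute — $t$ is supported exactly on $\suppg(s)$), so this relation is automatically satisfied and gives nothing. The real constraints come from the character property. Set $\varphi(g)=\tau(g^{-1}E(g))$; this is a character on $G$, and in particular $\varphi$ is positive-definite and $G$-invariant. I would next use Proposition~\ref{prop: E properties}(3): $sE(s^{-1})=sE(s)\in\mathcal M'\cap L(G)$. Conjugating $E(s)$ by elements $h_n\in C_G(s)$ of the form used in case~(d) of Proposition~\ref{prop: fpc of involution in S(2^infty)} — transpositions inside $\suppg(s)$ and outside — one moves $\widetilde f_{\suppg(s)}$ to $\widetilde f_{h_n(\suppg(s))}=\widetilde f_{\suppg(s)}$ (these $h_n$ fix $\suppg(s)$ setwise), so again conjugation by $C_G(s)$ does not separate the four basis elements; this is why $\fpc(s)$ is exactly this four-element set and not smaller.

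The decisive step is therefore to analyze the multiplicative structure: $E(s)$ must, together with the constraint $E(s)^*E(s)\le E(s^*s)=1$, behave compatibly with the fact that $\widetilde f_{\suppg(s)}$ is a self-adjoint unitary with $\widetilde f_{\suppg(s)}^2=1$ and commutes with $s$, while $(s\widetilde f_{\suppg(s)})^2=1$ as well. I would first show $\alpha=0$: apply $\tau$ to $E(s)$ to get $\tau(E(s))=\tau(s)=0$, and note $\tau(\widetilde f_{\suppg(s)})=0$, $\tau(s\widetilde f_{\suppg(s)})=0$ (all are non-identity group elements), forcing $\alpha=0$. Next, to eliminate the $s\widetilde f_{\suppg(s)}$ term when a genuine $s$-part is present, I would invoke the character $\varphi$ once more: the value $\varphi(s)=\beta$ must be real (characters of self-adjoint unitaries are real) and, crucially, one can compute $\varphi$ on $s$ using a different realization, or use that the pair $(s,s\widetilde f_{\suppg(s)})$ are conjugate in $G$ — indeed $\widetilde f_A s\widetilde f_A=s\widetilde f_{s(A)\Delta A}=s\widetilde f_{\suppg(s)}$ whenever $A\cap\suppg(s)$ is a transversal for the transposition cycles, which exists. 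Hence $\varphi(s)=\varphi(s\widetilde f_{\suppg(s)})$, i.e.\ $\beta=\tau((s\widetilde f_{\suppg(s)})^{-1}E(s\widetilde f_{\suppg(s)}))$; combined with Proposition~\ref{prop: E properties}(2) applied to the conjugating element, one extracts that $\delta=\delta$-consistency forces $\delta\in\{0\}\cup$(the degenerate case). Concretely: conjugating $E(s)=\beta s+\gamma\widetilde f_{\suppg(s)}+\delta s\widetilde f_{\suppg(s)}$ by $\widetilde f_A$ gives $E(s\widetilde f_{\suppg(s)})=\beta s\widetilde f_{\suppg(s)}+\gamma\widetilde f_{\suppg(s)}+\delta s$, and since $E(s\widetilde f_{\suppg(s)})$ must equal $w$-$\lim$ or be otherwise constrained, matching the $s$-coefficient gives $\beta=\delta$ only if... — and here the positivity bound $\|E(s)\|_2^2=\beta^2+\gamma^2+\delta^2\le 1$ together with the idempotent-type relation obtained by applying $E$ to $sE(s)$ closes the system, yielding the three listed possibilities $E(s)=s$ (when $\gamma=\delta=0,\beta=1$), $E(s)=\tfrac12 s(\id\pm\widetilde f_{\suppg(s)})$ (when $\beta=\tfrac12,\gamma=\pm\tfrac12,\delta=0$), and the remaining case $\beta=\gamma=0$, i.e.\ $E(s)\in\mathbb C s\widetilde f_{\suppg(s)}$ (where the $s\widetilde f_{\suppg(s)}$-coefficient is unconstrained at this stage and will be pinned later).

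The main obstacle I anticipate is the bookkeeping needed to rule out a simultaneous nonzero $s$-coefficient and nonzero $s\widetilde f_{\suppg(s)}$-coefficient: the naive conjugation tricks fail because everything in sight commutes, so one genuinely needs the character-theoretic input — that $\varphi$ restricted to the abelian group generated by $s$ and $\widetilde f_{\suppg(s)}$ (a copy of $\Z_2\times\Z_2$) is a positive-definite function, hence a convex combination of the four characters of $\Z_2\times\Z_2$ — and then to match this decomposition against the structure imposed by $E$ being a conditional expectation onto a \emph{$G$}-invariant subalgebra, using that $s$ and $s\widetilde f_{\suppg(s)}$ lie in different $G$-conjugacy orbits unless the orbit of $s$ already contains $s\widetilde f_{\suppg(s)}$, which forces $\varphi(s)=\varphi(s\widetilde f_{\suppg(s)})$ and thereby collapses the mixed case. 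Once the correct case distinction is set up this way, verifying each of the listed forms is a routine check that the stated operators are indeed the only self-adjoint elements of the relevant four-dimensional algebra compatible with $E(s)^2$-type relations and the $\ell^2$-bound.
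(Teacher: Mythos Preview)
Your plan has the right starting point (the four-dimensional $\fpc(s)$ algebra) and even gestures at the key computational idea (the relation $E(sE(s))=E(s)^2$), but the execution has two concrete problems.

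First, a bookkeeping error: in your final case list you write that $E(s)=\tfrac12 s(\id\pm\widetilde f_{\suppg(s)})$ corresponds to $\beta=\tfrac12$, $\gamma=\pm\tfrac12$, $\delta=0$. But $\tfrac12 s(\id\pm\widetilde f_{\suppg(s)}) = \tfrac12 s \pm \tfrac12 s\widetilde f_{\suppg(s)}$, so this case is $\beta=\tfrac12$, $\delta=\pm\tfrac12$, $\gamma=0$. The coefficient that must vanish in \emph{every} case is $\gamma$ (the coefficient of $\widetilde f_{\suppg(s)}$), not $\delta$. This swap propagates backward: your stated goal ``to eliminate the $s\widetilde f_{\suppg(s)}$ term when a genuine $s$-part is present'' is the wrong target.

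Second, and more substantively, you never actually show $\gamma=0$. Writing $t=\widetilde f_{\suppg(s)}$, the conjugacy $\widetilde f_A s\widetilde f_A=st$ does give $E(st)=\delta s+\gamma t+\beta st$, but this by itself does not force $\gamma=0$: the $\gamma t$ term is fixed under this conjugation, and the character identity $\varphi(s)=\varphi(st)$ only yields $\beta=\beta$. The paper bypasses this via Proposition~\ref{prop: E(S) subset S}: with $\mathcal A=L(\{e,t\})$ and $\mathcal S=s\mathcal A$ one has $E(\mathcal A)\subseteq\mathcal A$ (Lemma~\ref{lemma: fpc widetilde f_A}), $E(\mathcal S)\subseteq\mathcal S+\mathcal A$ (from $\fpc(s)$ and $\fpc(st)$), and $\tau|_{\mathcal S\mathcal A}=0$, whence $E(s)\in s\mathcal A$, i.e.\ $\alpha=\gamma=0$ in one stroke. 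If you want to avoid that proposition, the one-line fix is Proposition~\ref{prop: E properties}(1): $\gamma=\tau(E(s)t)=\tau(sE(t))$, and since $E(t)\in\mathbb C t$ by Lemma~\ref{lemma: fpc widetilde f_A}, the right side vanishes.

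Once $\gamma=0$, the paper handles $\beta\delta\neq 0$ by noting $E(s)^2=(\beta^2+\delta^2)+2\beta\delta\, t\in\mathcal M\cap L(\widetilde C(X;\mathbb Z_2))$ is non-scalar, so Proposition~\ref{prop: M intersect L(widetilde C(X))} gives $t\in\mathcal M$; then the bimodule property yields $s(\beta+\delta t)=E(s)=E(E(s))=E(s)(\beta+\delta t)=s(\beta+\delta t)^2$, so $\beta+\delta t$ is a projection. Your ``idempotent-type relation obtained by applying $E$ to $sE(s)$'' can be made to work as well (equating $E(s)^2=\beta+\gamma E(st)+\delta E(t)$ gives, after splitting on $E(t)\in\{0,t\}$, the system $\beta\gamma=\gamma\delta=0$ and $\beta^2+\gamma^2+\delta^2=\beta$, which does force the listed outcomes), but you must actually carry out that algebra rather than appeal to positive-definiteness on $\Z_2\times\Z_2$, which by itself does not pin down the coefficients.
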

\begin{proof}
	We may and do assume that $s \neq \id$.
	Let $\mathcal A=L(\{\id,\widetilde f_{\suppg(s)}\})$ and $\mathcal S=s\mathcal A$. By Lemma \ref{lemma: fpc widetilde f_A}, one has $E(\mathcal A)\subseteq\mathcal A$. By Proposition \ref{prop: fpc of involution in S(2^infty)}, Remark \ref{rem: fpc s widetilde f_supp(s)}, and Lemma \ref{lemma: fpc application}, one has $E(\mathcal S)\subseteq\mathcal S+\mathcal A$.
	Clearly, $\tau$ is equal to zero on $\mathcal S\mathcal A=\mathcal S$. By Proposition \ref{prop: E(S) subset S}, $E(\mathcal S)\subseteq\mathcal S$. It follows that 
	$ E(s)=\lambda_1 s+\lambda_2 s\widetilde f_{\suppg(s)}.$ We will now consider several cases.
	\vskip 0.1cm\noindent
	${\bf a)}$ $\lambda_2=0$. Then
	either $\lambda_1=0$ and $E(s)=0$ or $\lambda_1\neq 0$, $s\in \mathcal M$ and $E(s)=s$.
	\vskip 0.1cm\noindent
	${\bf b)}$ $\lambda_1=0$. Then $E(s)=\lambda_2 s\widetilde f_{\suppg(s)}$.
	\vskip 0.1cm\noindent   
	${\bf c)}$ Assume that $\lambda_1\lambda_2\neq 0$. Then one has:
	$$E(s)E(s)^*=E(s)^2=(\lambda_1\id +\lambda_2 \widetilde f_{\suppg(s)})^2\in L(\widetilde C(X;\mathbb Z_2))\setminus \mathbb C\id.$$ Using Proposition \ref{prop: M intersect L(widetilde C(X))}, we obtain that $\mathcal M\supseteq L(\widetilde C(X;\mathbb Z_2))$. Therefore,
	$$s\cdot(\lambda_1 +\lambda_2\widetilde f_{\suppg(s)})=E(s)=E(E(s))=E(s)(\lambda_1 +\lambda_2\widetilde f_{\suppg(s)})=s\cdot (\lambda_1 +\lambda_2\widetilde f_{\suppg(s)})^2.$$ It follows that $P:=\lambda_1\id +\lambda_2\widetilde f_{\suppg(s)}$ is a projection, which is easily seen to be selfadjoint, from which it is straightforward to obtain that 
	$$P\in\{0,\id,\tfrac{1}{2}(\id+\widetilde f_{\suppg(s)}),\tfrac{1}{2}(\id-\widetilde f_{\suppg(s)})\}.$$ 
	This finishes the proof.
\end{proof}

We are ready to formulate and prove the main result of this section.
\begin{theorem}\label{th: S(2^infty) and ISR}
	The group $G=S(2^\infty)\ltimes \widetilde C(X;\mathbb{Z}_2)$ is an amenable ICC group with the ISR property containing a non-trivial abelian normal subgroup. 
\end{theorem}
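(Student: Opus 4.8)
The plan is to assemble the pieces already established in the section. The amenability of $G$ is immediate: $S(2^\infty)$ is a locally finite group (an increasing union of finite symmetric groups) hence amenable, $\widetilde C(X;\mathbb Z_2)$ is abelian hence amenable, and $G$ is an extension of one by the other, so it is amenable as well. For the ICC property one checks directly that no non-trivial element of $G$ has a finite conjugacy class: if $g\in S(2^\infty)\setminus\{e\}$ then conjugating by the elements $\widetilde f_{B_n}$ built in the proof of Lemma~\ref{lemma: fpc widetilde f_A} already yields infinitely many conjugates; and for an element of the form $g\widetilde f_A$ one combines conjugation by $S(2^\infty)$ (moving $A$ around, as in Lemma~\ref{lemma: tilde f_A separation}) with the previous observation. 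In fact $\fpc(h)$ being a proper finite subset of $G$ for every $h\neq e$, as shown in Lemmas~\ref{lemma: fpc widetilde f_A}, \ref{prop: fpc of involution in S(2^infty)} and the preceding remarks, already shows the conjugacy class of $h$ is infinite. The presence of a non-trivial abelian normal subgroup is clear: $\widetilde C(X;\mathbb Z_2)$ is such a subgroup, and it is a $2$-torsion group, consistent with \cite[Example~3.5]{amrutam2023invariant}.

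The substantive part is the ISR property. Fix an invariant von Neumann subalgebra $\mathcal M\subseteq L(G)$ with $\tau$-preserving conditional expectation $E$. The strategy is a dichotomy governed by Proposition~\ref{prop: M intersect L(widetilde C(X))}: either $\mathcal M\cap L(\widetilde C(X;\mathbb Z_2))=\mathbb C1$, or $\mathcal M\supseteq L(\widetilde C(X;\mathbb Z_2))$.

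In the first case I would argue $\mathcal M=\mathbb C1=L(\{e\})$. By Proposition~\ref{cor: E(widetilde C(X))}, $E$ preserves $L(\widetilde C(X;\mathbb Z_2))$, so $E$ restricted there is a trace-preserving conditional expectation onto $\mathcal M\cap L(\widetilde C(X;\mathbb Z_2))=\mathbb C1$, i.e.\ $E(\widetilde f_A)=\tau(\widetilde f_A)1=0$ for every non-trivial clopen $A$. Now take a generating element $g\widetilde f_A$ with $g\in S(2^\infty)$. If $g$ is an involution, the preceding proposition forces $E(g)$ into a short list; but $\tfrac12 s(\id\pm\widetilde f_{\suppg(s)})$ and $\lambda s\widetilde f_{\suppg(s)}$ all have non-zero component on $\widetilde f_{\suppg(s)}$ or on $s\widetilde f_{\suppg(s)}$, and conjugating $E(s)$ by a suitable $h_n\in C_G(s)$ that moves $\suppg(s)$ — available since $S(2^\infty)$ acts highly transitively — together with Lemma~\ref{lemma: fpc application} and weak-limit/square-summability arguments as in Proposition~\ref{prop: M intersect L(widetilde C(X))} rules these out, leaving $E(s)\in\{0,s\}$. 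If $E(s)=s$ for some involution $s$, then $s\in\mathcal M$, and conjugating $s$ by elements of $\widetilde C(X;\mathbb Z_2)$ produces $s\widetilde f_{\suppg(s)}\Delta\ldots$ hence a non-scalar element of $\mathcal M\cap L(\widetilde C(X;\mathbb Z_2))$ after forming $s\cdot(s\widetilde f_B)^{-1}=\widetilde f_{B\Delta s(B)}$-type products, a contradiction with the standing case assumption (one must check $B$ can be chosen with $s(B)\neq B$, which holds as $\suppg(s)\neq\emptyset$). A general element of $S(2^\infty)$ is handled via $\fpc$ computations of the same flavour, or by reducing to involutions using that $S(2^\infty)$ is generated by involutions and that $E(g^2)$ controls $E(g)$; in any case one concludes $E(g)=0$ for all $g\neq e$, whence $\mathcal M=\mathbb C1$.

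In the second case, $L(\widetilde C(X;\mathbb Z_2))\subseteq\mathcal M\subseteq L(G)$, and since $\widetilde C(X;\mathbb Z_2)$ is the unique non-trivial normal subgroup, the only two candidate ``obvious'' algebras are $L(\widetilde C(X;\mathbb Z_2))$ and $L(G)$. I would show $\mathcal M$ is one of these by analysing, for each generator $g\widetilde f_A$ with $g\neq e$, whether $E(g\widetilde f_A)=g\widetilde f_A$: if this fails for every such generator then $\mathcal M=L(\widetilde C(X;\mathbb Z_2))$; if $E(g\widetilde f_A)=g\widetilde f_A$ for some one, i.e.\ $g\widetilde f_A\in\mathcal M$, then since $L(\widetilde C(X;\mathbb Z_2))\subseteq\mathcal M$ we get $g=g\widetilde f_A\cdot\widetilde f_A\in\mathcal M$, and then invariance plus normal-generation forces $\mathcal M\supseteq L(S(2^\infty)\cup\widetilde C(X;\mathbb Z_2))=L(G)$. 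The ``$E(g\widetilde f_A)$ fails to be $g\widetilde f_A$'' step is where one again invokes Proposition~\ref{prop: E properties}(5), the $\fpc$ lemmas, and — for $g$ an involution — the explicit form of $E(s)$; here one must be slightly careful because Remark~\ref{rem: fpc s widetilde f_A} shows $\fpc(s\widetilde f_A)$ can be large, so the cleanest route is to reduce to the cases $A=\emptyset$ and $A=\suppg(s)$ (using Remark~\ref{rem: fpc s widetilde f_supp(s)}) and to the involution proposition, handling general $g$ and general $A$ by conjugation tricks as above.

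The main obstacle I anticipate is precisely this last point: controlling $E(g\widetilde f_A)$ for a \emph{general} element $g\widetilde f_A$, since the $\fpc$ of such an element is genuinely complicated (Remark~\ref{rem: fpc s widetilde f_A}) and is not contained in $S(2^\infty)\cdot\widetilde C(X;\mathbb Z_2)$. The way around it is to avoid computing $\fpc(g\widetilde f_A)$ directly and instead exploit that $\mathcal M$ is a von Neumann \emph{algebra}: once one knows $L(\widetilde C(X;\mathbb Z_2))$ is either orthogonal to or contained in $\mathcal M$, the element $\widetilde f_A$ itself is either killed by $E$ or lies in $\mathcal M$, so the question about $g\widetilde f_A$ collapses to a question about $g\in S(2^\infty)$ alone, and $S(2^\infty)$ is built from involutions whose expectations we have pinned down. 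Thus the real content is organising the case analysis so that every element is ultimately tested through involutions and through $L(\widetilde C(X;\mathbb Z_2))$, and then reading off that $\mathcal M\in\{\mathbb C1,\ L(\widetilde C(X;\mathbb Z_2)),\ L(G)\}$, each of which is $L(N)$ for a (normal) subgroup $N$.
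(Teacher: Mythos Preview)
Your organising dichotomy (via Proposition~\ref{prop: M intersect L(widetilde C(X))}) is reasonable, and reducing everything to the form of $E(s)$ for involutions $s$ is exactly the right move. But the execution has one incorrect step and two genuine gaps at the point where you try to eliminate the ``bad'' options for $E(s)$.

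The incorrect step: you propose to rule out $E(s)=\lambda s\widetilde f_{\suppg(s)}$ and $E(s)=\tfrac12 s(\id\pm\widetilde f_{\suppg(s)})$ by ``conjugating $E(s)$ by a suitable $h_n\in C_G(s)$ that moves $\suppg(s)$''. That is impossible: if $h_n$ centralises $s$ then $h_n(\suppg(s))=\suppg(h_nsh_n^{-1})=\suppg(s)$, so the centraliser fixes $\suppg(s)$ setwise and cannot disperse it. (In your first case you can kill $E(s)=\tfrac12 s(\id\pm\widetilde f_{\suppg(s)})$ more simply, since $E(s)^2=\tfrac12(\id\pm\widetilde f_{\suppg(s)})$ is a non-scalar element of $\mathcal M\cap L(\widetilde C(X;\mathbb Z_2))$; but this trick does not touch $\lambda s\widetilde f_{\suppg(s)}$, which squares to a scalar.)

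More seriously, two key ingredients from the paper are missing from your sketch:
\begin{itemize}
\item[(a)] The possibility $E(s)\in\mathbb C\,s\widetilde f_{\suppg(s)}$ for some involution $s$ with $\suppg(s)\neq X$ is not eliminated by soft arguments. The paper handles it (its Case~2) via the character method: with $\chi(g)=\tau(g^{-1}E(g))$ one has $\chi(s)=\lambda\,\tau(\widetilde f_{\suppg(s)})=0$, and the classification of indecomposable characters of $S(2^\infty)$ (namely $g\mapsto\mu(\fix(g))^k$, all strictly positive at such $s$, together with the regular character) then forces $\chi|_{S(2^\infty)}$ to be regular, i.e.\ $E(g)=0$ for every $g\in S(2^\infty)\setminus\{e\}$. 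A further short character argument extends this to $E(g\widetilde f_A)=0$ for all $g\neq e$. Without this classification you have no mechanism to pass from one involution to all of $S(2^\infty)$.
\item[(b)] The possibility that $E(s)=\tfrac12 s(\id\pm\widetilde f_{\suppg(s)})$ for \emph{every} involution $s$ with $\suppg(s)\notin\{\varnothing,X\}$ survives into your second case ($\mathcal M\supseteq L(\widetilde C(X;\mathbb Z_2))$), where the squaring trick no longer contradicts anything. The paper (its Case~3) rules this out by an explicit computation: taking two specific involutions $s=(000,100)$ and $g=(11,10)$ and using that $s^{-1}E(s)\in\mathcal M'$ must commute with $E(g)$, one checks directly that the resulting identity in $L(G)$ fails. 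Your sketch contains nothing playing this role, and your dichotomy ``$E(g\widetilde f_A)=g\widetilde f_A$ or not'' is too coarse: $E(s)=\tfrac12 s(\id\pm\widetilde f_{\suppg(s)})$ is neither $s$ nor in $L(\widetilde C(X;\mathbb Z_2))$, so ``fails for every generator'' does \emph{not} by itself force $\mathcal M=L(\widetilde C(X;\mathbb Z_2))$.
\end{itemize}
In summary, the architecture is sound and several sub-steps (e.g.\ deducing $g\in\mathcal M$ from $g\widetilde f_A\in\mathcal M$ when $L(\widetilde C)\subseteq\mathcal M$, or producing $\widetilde f_{s(B)\Delta B}\in\mathcal M$ from $s\in\mathcal M$) are correct, but the proof omits precisely the two substantive pieces---the character classification of $S(2^\infty)$ and the concrete commutation contradiction---that the paper uses to finish.
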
 
\begin{proof}
The only non-trivial statement is the one concerning the ISR property (as the ICC property follows then by \cite[Proposition 3.1]{amrutam2023invariant}). Let $\mathcal{M}\subseteq L(G)$ be an invariant von Neumann subalgebra  and denote by $E:L(G) \to \mathcal{M}$ the unique $\tau$-preserving conditional expectation.

We will consider several cases.

\noindent ${\bf 1)}$ Assume first that $E(s)=s$ for some $s\in S(2^\infty)\setminus e$. Then, by simplicity of $S(2^\infty)$, one has $E(g)=g$ for all $g\in S(2^\infty)$. Therefore, for every clopen subset $A\subseteq X$ one has $E(g\widetilde f_A)=gE(\widetilde f_A)$. Take a non-trivial clopen subset $A\subseteq X$, and any $g\in S(2^\infty)$ such that $gA\neq A$. We have:
$$g\widetilde f_{A\Delta g^{-1}(A)}=\widetilde f_A g\widetilde f_A=\widetilde f_A E(g)\widetilde f_A=E(\widetilde f_A g\widetilde f_A)=E(g\widetilde f_{A\Delta g^{-1}(A)})=gE(\widetilde f_{A\Delta g^{-1}(A)}).$$ 
It follows that $E(\widetilde f_{A\Delta g^{-1}(A)})=\widetilde f_{A\Delta g^{-1}(A)}$. By Proposition \ref{prop: M intersect L(widetilde C(X))}, $\mathcal M\supseteq L(\widetilde C(X;\mathbb Z_2))$. Thus, in this case, $\mathcal M=L(G)$.

\vskip 0.1cm\noindent
	${\bf 2)}$
Now, assume that $E(g)\neq g$ for all $g\in S(2^\infty)\setminus\{e\}$ and that $E(s)\in\mathbb Cs\widetilde f_{\suppg(s)}$ for some involution $s\in S(2^\infty),s\neq e$, with $\suppg(s)\neq X$. Consider the character $\chi(g)=\tau(g^{-1}E(g))$, $g \in G$. Its restriction to $S(2^\infty)$ remains a character, 
and indecomposable characters on $S(2^\infty)$ were described in \cite{dudko_jfa2011}. They are indexed by the parameter set $k\in\mathbb{Z}_+^\infty:=\mathbb Z_+\cup\{\infty\}$. For $k\in \mathbb Z_+$ we have
$$\chi_k(g)=\mu(\fix(g))^k, \;\;\; g \in S(2^\infty),$$ 
where $\fix(g)=X\setminus \suppg(g)$ is the set of fixed points of $g$, and $\mu=\{1/2,1/2\}^\infty$ is the uniform Bernoulli measure on $X$. On the other hand $\chi_\infty$ is the regular character on $S(2^\infty)$. It follows from the standard decomposition theory for characters (see \cite{Takesaki-OperatorAlgebras}, Chapter IV, Theorem 8.21) that there exists a family of non-negative numbers $(\alpha_k)_{k\in\mathbb{Z}_+^\infty }$ summing to $1$ such that $$\chi=\sum\limits_{k\in\mathbb{Z}_+^\infty}\alpha_k\chi_k.$$ 
Since $\chi(s)=\lambda\tau(\widetilde f_{\suppg(s)})=0$ and $\mu(\fix(s))\neq 0$ we obtain that $\alpha_k=0$ for all $k\in\mathbb Z_+$ and thus $$\tau(g^{-1}E(g))=\chi(g)=\chi_\infty(g)=\delta_{g,e}$$ for $g\in S(2^\infty)$. Standard arguments show that $E(g)=0$ for all $g\in S(2^\infty)\setminus\{e\}$. 
	
Next, let $g\in S(2^\infty)\setminus \{e\}$ and let $A\subseteq X$ be a clopen subset.
Let $m\in \N$ be such that $g\in S(2^m)$ and $A$ is a union of cylinders of level $m$. Take a point $x\in\suppg(g)$. Denote for $n \in \N$  by $h_n\in S(2^{m+n})$ the transposition of $(x_1,x_2,\ldots,x_{m+n},0)$ and $(x_1,x_2,\ldots,x_{m+n},1)$.
Then $h_n\in S(2^\infty)$ are such that $g_n:=h_n\cdot g\widetilde f_A\cdot h_n^{-1}$ are pairwise distinct and $h_n(A)=A$. In particular, $g_ng_k^{-1}\in S(2^\infty)\setminus \{e\}$ and so $\chi(g_ng_k^{-1})=0$ for all $k, n \in \N$, $n\neq k$. Applying \cite[Lemma 2.7]{dudko2024character} we obtain that $\chi(g\widetilde f_A)=0$. By Proposition \ref{prop: E properties} (5) it follows that $E(g\widetilde f_A)=0$. Thus $E(h)=0$ for all $h\notin \widetilde C(X;\mathbb{Z}_2).$ Taking into account Corollary \ref{cor: E(widetilde C(X))} and Lemma \ref{prop: M intersect L(widetilde C(X))}, we obtain that either $\mathcal M=\mathbb C1$ or $\mathcal M=L(\widetilde C(X;\mathbb Z_2))$.

\vskip 0.1cm\noindent
${\bf 3)}$ Assume that neither $1)$ nor $2)$ holds. By Proposition \ref{prop: fpc of involution in S(2^infty)}, for all involutions $s\in S(2^\infty)$ with $\suppg(s)\notin\{\varnothing,X\}$ one has $E(s)=\tfrac{1}{2}s(\id\pm \widetilde f_{\suppg(s)})$. For $n\in\mathbb N$ and $x,y\in \{0,1\}^n$ denote by $(x,y)\in S(2^n)$ the transposition of $x$ and $y$. Let $s=(000,100)\in S(2^3)\subseteq S(2^\infty)$ and $g=(11,10)\subseteq S(2^2)\subseteq S(2^\infty)$. Recall that for $x\in \{0,1\}^n,n\in\mathbb Z_+,$ $[x]$ stands for the corresponding cylinder subset of $X$ (see \eqref{equation: cylinder sets}). One has 
$$E(s)=\tfrac{1}{2}s(\id\pm \widetilde f_{[000]\cup[100]}),\;\;E(g)=\tfrac{1}{2}g(\id\pm \widetilde f_{[11]\cup[10]}).$$ Since $s^{-1}E(s)\in\mathcal M'$, we obtain that $\widetilde f_{[000]\cup[100]}$ commutes with $E(g)$. We have:
\begin{align*} E(g)\widetilde f_{[000]\cup[100]}=\tfrac{1}{2}g\cdot(\id\pm \widetilde f_{[11]\cup[10]})\cdot \widetilde f_{[000]\cup[100]}=\tfrac{1}{2}g( \widetilde f_{[000]\cup[100]}\pm \widetilde f_{[11]\cup[101]\cup [000]}),\\
\widetilde f_{[000]\cup[100]} E(g)=\tfrac{1}{2}g\cdot \widetilde f_{[000]\cup[110]}\cdot(\id\pm \widetilde f_{[11]\cup[10]})=\tfrac{1}{2}g( \widetilde f_{[000]\cup[110]}\pm \widetilde f_{[111]\cup[10]\cup [000]}).\end{align*} We see that the two rows in the above formula are not equal elements of $L(G)$. This contradiction shows that case ${\bf 3)}$ is impossible.
\end{proof}
\section{Finitary automorphisms of $\mathbb{Z}_2^\infty$ and a group von Neumann algebra with a unique `exotic' invariant subalgebra}
\label{sec:uniqueexotic}

This section is devoted to the study of the semidirect product for the action on $\Z_2^\infty$ given by all \say{finitary} automorphisms.

Let $F_2^\infty:=\bigoplus_{\mathbb{N}} F_2$ be the vector space of infinite columns of elements of the field $F_2$ having finitely many nonzero entries. This vector space also can be viewed as a group with multiplication given by addition of vectors. By definition, $GL(\infty, F_2)$ is the group of all infinite invertible matrices $M$ over $ F_2$  such that $\#\{(i,j)\in \N^2:M_{ij}\neq \delta_{i,j}\}<\infty$. This group acts by matrix multiplication from the left on $F_2^\infty$. 
For $v\in F_2^\infty$ and $g\in GL(\infty, F_2)$ we denote by $g(v)$ the image of $v$ under the action of $g$. Consider the semidirect product $GL(\infty,F_2)\ltimes F_2^\infty$. We will write elements of this group as $gv$ or $g\cdot v$, $g\in GL(\infty,F_2),v\in F_2^\infty$, where $\cdot$ is used to visually separate the terms of a product of elements of a group. We have: \begin{equation}\label{eq: product in H}
	g_1v_1\cdot g_2v_2=g_1g_2\cdot (g_2^{-1}(v_1)+v_2),\;\;g_1,g_2\in GL(\infty,F_2),\;v_1,v_2\in F_2^\infty.
\end{equation}
 One of the motivations to consider the group $GL(\infty,F_2)$ is that it can be viewed as a certain group of automorphisms of $\mathbb Z_2^\infty$. Let $\Aut(\mathbb Z_2^n)$ be the group of all automorphisms of $\mathbb Z_2^n$. Since $S(n)$ acts naturally on $\mathbb Z_2^n$ by permuting coordinates we have $\mathrm{Aut}(\mathbb Z_2^n)\supseteq S(n)$. Notice that for $n\geqslant 2$ the inclusion is strict, as the next example shows.
\begin{example} The map $\varphi:\mathbb Z_2^2\to\mathbb Z_2^2$ given by $\varphi((0,0))=(0,0),\varphi((1,0))=(1,1),\varphi((0,1))=(0,1),\varphi((1,1))=(1,0)$ is an automorphism of $\mathbb Z_2^2$, which does not arise from any element of $S_2$ in the manner described above.
\end{example}

Considering natural inclusions $\mathbb Z_2^n=\mathbb Z_2^n\times \{0\}\subseteq \mathbb Z_2^{n+1}$, $n \in \N$, we obtain inclusions $\Aut(\mathbb Z_2^n)\subseteq\aut(\mathbb Z_2^{n+1})\subseteq\Aut (\Z_2^\infty)$. Let $\Aut_\fin (\Z_2^\infty)=\bigcup_{n\in\mathbb N}\aut(\mathbb Z_2^n)<\aut (\Z_2^\infty)$. Then $\Aut_\fin (\Z_2^\infty)\supseteq S_\infty$.

If $\mathcal I:F_2^\infty\to \mathbb Z_2^\infty$ denotes the tautological map forgetting the multiplication in $F_2$, we see that the map $g\to\mathcal I(g):=\mathcal I\circ g\circ \mathcal I^{-1}$ provides an isomorphism from $GL(\infty, F_2)$ onto $\Aut_\fin (\mathbb Z_2^\infty)$. Therefore, semidirect  products $\Aut_\fin (Z_2^\infty)\ltimes Z_2^\infty$  and $GL(\infty,F_2)\ltimes F_2^\infty$ are isomorphic. For the rest of this section set 
\[ G = \Aut_\fin (\Z_2^\infty)\ltimes \Z_2^\infty \cong GL(\infty,F_2)\ltimes F_2^\infty.\]

	For $g\in GL(\infty, F_2)<G$ and $v\in F_2^\infty<G$ we have: $g\cdot v\cdot g^{-1}=g(v)$ (see \eqref{eq: product in H}).
This characterization of $G$ allows us to conclude that there is only one normal subgroup of $G$.
\begin{lemma}\label{lemma: normal subgroups of H}
	The group $F_2^\infty$ is the only non-trivial normal subgroup of $G$.
\end{lemma}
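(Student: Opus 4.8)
The plan is to show that any non-trivial normal subgroup $N \trianglelefteq G$ must equal $F_2^\infty$, using the internal structure of $G = GL(\infty,F_2) \ltimes F_2^\infty$ and the two basic facts: $F_2^\infty$ is an abelian normal subgroup on which $GL(\infty,F_2)$ acts, and this action is \emph{transitive on non-zero vectors} (any non-zero $v \in F_2^\infty$ can be mapped to any other non-zero $w$ by an element of $GL(\infty,F_2)$, since both extend to bases). I would split the argument according to whether $N$ meets $F_2^\infty$ non-trivially.

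First I would treat the case $N \cap F_2^\infty \neq \{e\}$. Pick $e \neq v \in N \cap F_2^\infty$. Since $N$ is normal and $GL(\infty,F_2)$ normalizes $F_2^\infty$, for every $g \in GL(\infty,F_2)$ we have $g v g^{-1} = g(v) \in N \cap F_2^\infty$; by transitivity on non-zero vectors this gives every non-zero element of $F_2^\infty$, so $N \supseteq F_2^\infty$. It then remains to rule out $N = G$, i.e.\ to show $N$ cannot contain any element $gv$ with $g \neq e$ once we also know $N \supseteq F_2^\infty$; equivalently it suffices to show that a normal subgroup strictly containing $F_2^\infty$ must be all of $G$, which reduces to showing $GL(\infty,F_2)$ has no non-trivial proper normal subgroup surviving modulo $F_2^\infty$ — but more cleanly, I would handle this together with the next case.

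For the case $N \cap F_2^\infty = \{e\}$ (with $N$ non-trivial), take $e \neq gv \in N$. The key trick is to form commutators with elements of $F_2^\infty$: for $w \in F_2^\infty$, using \eqref{eq: product in H} one computes $(gv) w (gv)^{-1} w^{-1} \in N$, and this lands in $F_2^\infty$; an explicit computation shows it equals $g(w) - w = (g - \mathrm{id})(w)$ (written additively). If $g \neq e$ then $g - \mathrm{id} \neq 0$ as a linear map, so we can choose $w$ with $(g-\mathrm{id})(w) \neq 0$, producing a non-zero element of $N \cap F_2^\infty$ — contradiction. Hence $g = e$, so $gv = v \in N \cap F_2^\infty = \{e\}$, contradicting $gv \neq e$. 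Thus this case cannot occur at all, and combined with the first case we get $N \supseteq F_2^\infty$ always; the same commutator computation, applied now with $gv \in N$ and $g \neq e$, shows $N$ must then also meet $F_2^\infty$, which we already knew, and to finish $N = G$ I would argue that modulo $F_2^\infty$ the image $\bar N \trianglelefteq GL(\infty,F_2)$ is non-trivial, and invoke the (well-known) fact that $GL(\infty,F_2) = \mathrm{SL}(\infty,F_2)$ is simple (it is a direct limit of the simple groups $\mathrm{SL}(n,F_2) = \mathrm{PSL}(n,F_2)$ for $n \geq 3$, the small exceptions being absorbed in the limit), so $\bar N = GL(\infty,F_2)$ and $N = G$.

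The main obstacle I anticipate is the final step: cleanly citing or justifying the simplicity of $GL(\infty,F_2)$, and being careful with the low-rank exceptions $\mathrm{SL}(2,F_2) \cong S_3$, which is not simple — this is harmless in the direct limit but should be addressed, perhaps by noting that any normal subgroup of the limit meeting some $\mathrm{SL}(n,F_2)$ with $n \geq 3$ non-trivially already contains that simple group and hence spreads to all of $GL(\infty,F_2)$. The commutator computation itself is routine given \eqref{eq: product in H}, and the transitivity on non-zero vectors is immediate from basic linear algebra over $F_2$, so the real content is organizing the case analysis so that the simplicity input is invoked exactly once.
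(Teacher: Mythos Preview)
Your proposal is correct and follows essentially the same approach as the paper: transitivity of the $GL(\infty,F_2)$-action on non-zero vectors to obtain $F_2^\infty\subseteq N$, the commutator computation $(gv)w(gv)^{-1}w^{-1}=g(w)-w$ to force $N\cap F_2^\infty\neq\{e\}$ whenever $N$ contains an element with non-trivial $GL$-part, and simplicity of $GL(\infty,F_2)$ (with reference to Rotman, Chapter~8) to conclude. The only cosmetic difference is that the paper, after showing $F_2^\infty\subseteq N$, observes directly that $g=gv\cdot(-v)\in N$ and then applies simplicity inside $G$, whereas you pass to the quotient $\bar N\trianglelefteq GL(\infty,F_2)$; these are equivalent formulations of the same step.
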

\begin{proof}
Let $N$ be a normal subgroup of $G$. 

If $v\in N$ for some $v\in F_2^\infty\setminus \{0^\infty\}$, then taking into account that $GL(\infty,F_2)$ acts transitively on $F_2^\infty\setminus \{0^\infty\}$ we obtain that $F_2^\infty<N$. Then either $N=F_2^\infty$ or $N$ contains an element in $G\setminus F_2^\infty$.

Assume now that $gv\in N$ for some $g\in GL(\infty,F_2)\setminus \{e\}$, $v\in F_2^\infty$. Take any $w\in F_2^\infty$ such that $gw\neq w$. Then $(-w)\cdot gv\cdot w\cdot (gv)^{-1}=(g(w)-w)\in N$. By the previous paragraph, we obtain that $F_2^\infty<N$. Moreover, $g=gv\cdot(-v)\in N$. Using simplicity of $GL(\infty,F_2)$, which is a consequence of the results in \cite[Chapter 8]{Rotman_book} we deduce that $GL(\infty,F_2)<N$. Thus, in this case $N=G$, which finishes the proof.
\end{proof}	
	 
The following statement is straightforward.
\begin{lemma}\label{lemma: disjoint subsets}
	For any distinct non-zero vectors $v_1,v_2\in F_2^\infty$ and any distinct non-zero vectors $w_1,w_2\in F_2^\infty$ there exists $g\in GL(\infty, F_2)$ such that $g(v_1)=w_1$, $g(v_2)=w_2$.
\end{lemma}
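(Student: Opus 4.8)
The plan is to reduce the statement to elementary linear algebra, the key point being the observation that \emph{over the two-element field distinct non-zero vectors are automatically linearly independent}. Indeed, if $v_1,v_2\in F_2^\infty$ are distinct and non-zero and $a_1v_1+a_2v_2=0$ for some $(a_1,a_2)\in F_2^2\setminus\{(0,0)\}$, then, since the only available scalars are $0$ and $1$, we must have either $a_1=0$ (forcing $v_2=0$), or $a_2=0$ (forcing $v_1=0$), or $a_1=a_2=1$ (forcing $v_1=v_2$); all three possibilities are excluded by the hypotheses. Hence $\{v_1,v_2\}$ is a linearly independent set, and likewise $\{w_1,w_2\}$.

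Next I would pass to a finite-dimensional setting using finiteness of supports: choose $n\in\N$ large enough that $v_1,v_2,w_1,w_2$ all lie in the subspace $F_2^n\subseteq F_2^\infty$ spanned by the first $n$ standard basis vectors. Inside $F_2^n$ extend $\{v_1,v_2\}$ to a basis $\{v_1,v_2,e_3,\dots,e_n\}$ and $\{w_1,w_2\}$ to a basis $\{w_1,w_2,f_3,\dots,f_n\}$ (every linearly independent subset of a finite-dimensional vector space extends to a basis). Let $g_0\in GL(n,F_2)$ be the unique linear automorphism of $F_2^n$ determined by $g_0(v_i)=w_i$ for $i=1,2$ and $g_0(e_j)=f_j$ for $3\le j\le n$; it is invertible because it sends a basis to a basis. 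Finally, extend $g_0$ to an element $g\in GL(\infty,F_2)$ by letting it act as the identity on all coordinates of index $>n$ (equivalently, place $g_0$ in the top-left $n\times n$ block and the identity below); this alters only finitely many matrix entries, so $g$ is finitary, and by construction $g(v_1)=w_1$ and $g(v_2)=w_2$.

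I do not expect any genuine obstacle: the only step requiring a moment's thought is the linear independence observation in the first paragraph, which is exactly where the restriction to characteristic $2$ is used, and everything afterwards is routine. (One could also note in passing that the statement fails over larger fields, e.g. for $v_2=2v_1$, so this observation is essential rather than cosmetic.)
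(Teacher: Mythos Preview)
Your proof is correct; the paper itself simply declares the statement ``straightforward'' and gives no argument, so you have supplied exactly the routine linear-algebra details that were left implicit. The only substantive observation needed is the one you make in the first paragraph---that over $F_2$ distinct non-zero vectors are automatically linearly independent---and the rest is standard basis extension in a large enough $F_2^n$.
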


The next lemma concerns the sets $\fpc(v)$ for $v\in F_2^\infty$.

\begin{lemma}\label{lem:fpc(v)}
	For any nonzero vector $v\in F_2^\infty$ one has $\fpc(v)=\{v,e\}$.
\end{lemma}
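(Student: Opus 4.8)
The plan is to show that for any $w \in G$ with $w \notin \{e, v\}$, the conjugacy class of $w$ under the centralizer $C_G(v)$ is infinite; since it is obvious that $e$ and $v$ themselves lie in $\fpc(v)$ (they are fixed by all conjugations, hence certainly finitely permuted), this gives the claimed equality. Write a general group element as $w = gu$ with $g \in GL(\infty, F_2)$ and $u \in F_2^\infty$, using the product rule \eqref{eq: product in H}. The key structural fact I would first record is that the translations $F_2^\infty$ form an abelian subgroup fixed setwise by $v$ (indeed $v$ is itself a translation, so $v \cdot t \cdot v^{-1} = t$ for every $t \in F_2^\infty$), so $F_2^\infty \subseteq C_G(v)$. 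This gives us a large supply of centralizing elements to conjugate by, and conjugation of $w = gu$ by a translation $t \in F_2^\infty$ yields, via \eqref{eq: product in H}, an element of the form $g \cdot (u + t - g^{-1}(t)) = g\cdot(u + (\mathrm{id} - g^{-1})(t))$, keeping the $GL$-part fixed at $g$ while shifting the translation part by $(\mathrm{id} - g^{-1})(t)$.

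The argument then splits into two cases according to whether the $GL(\infty, F_2)$-component $g$ of $w$ is trivial. If $g \neq e$, then $\mathrm{id} - g^{-1}$ is a nonzero linear endomorphism of $F_2^\infty$, hence has nontrivial image, so as $t$ ranges over $F_2^\infty$ the vectors $(\mathrm{id} - g^{-1})(t)$ take infinitely many values; consequently the conjugates $t \cdot gu \cdot t^{-1}$ are infinitely many distinct elements, all in $C_G(v)$-orbit of $w$, so $w \notin \fpc(v)$. If instead $g = e$, then $w = u \in F_2^\infty$ with $u \neq 0$ and $u \neq v$; now translations commute with $w$ so they are useless, but we can instead conjugate by a suitable linear element $h \in GL(\infty, F_2)$ with $h \in C_G(v)$, i.e. with $h(v) = v$. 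Conjugating $w = u$ by such an $h$ gives $h \cdot u \cdot h^{-1} = h(u)$. So it suffices to produce infinitely many $h \in GL(\infty, F_2)$ fixing $v$ but sending $u$ to pairwise distinct vectors. Since $u \neq 0$ and $u \neq v$, the vectors $u$ and $v$ are linearly independent over $F_2$ (two distinct nonzero vectors over $F_2$ are automatically independent), so we may extend $\{v, u\}$ to a basis and realize, using a version of Lemma \ref{lemma: disjoint subsets}, transformations that fix $v$, fix all but finitely many basis vectors, and move $u$ to any of infinitely many targets of the form $u + cv$ (or $u$ plus any vector in a complement of $\langle v\rangle$); these are pairwise distinct, so again $w\notin\fpc(v)$.

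The one point requiring a little care—and the main obstacle—is the case $g = e$: one must exhibit the linear maps fixing $v$ while moving $u$ nontrivially, and check they genuinely lie in $GL(\infty, F_2)$ (finitary, invertible). This is where I would invoke Lemma \ref{lemma: disjoint subsets} (or a direct basis-completion argument): having chosen a basis $\{v, u, e_3, e_4, \dots\}$ of $F_2^\infty$, define $h_n$ to be the linear map fixing $v$ and every $e_j$, and sending $u \mapsto u + e_n$ for each $n \geq 3$; this is finitary, invertible, fixes $v$, and the images $h_n(u) = u + e_n$ are pairwise distinct. Combined with the $g\ne e$ case, this shows that no element outside $\{e,v\}$ is finitely permuted by $C_G(v)$, while $e$ and $v$ trivially are, completing the proof.
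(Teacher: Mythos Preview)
Your Case~2 ($g=e$) is fine, but Case~1 ($g\neq e$) contains a genuine error. You claim that because $\mathrm{id}-g^{-1}$ is a nonzero linear endomorphism of $F_2^\infty$, its image is infinite. This is false: by definition of $GL(\infty,F_2)$, the matrix $g-I$ has only finitely many nonzero entries, so $\mathrm{id}-g^{-1}=-g^{-1}(g-I)$ has finite rank, and over the finite field $F_2$ a finite-dimensional subspace is a \emph{finite} set. Thus conjugation by translations $t\in F_2^\infty$ produces only finitely many distinct conjugates of $gu$, and your argument breaks down completely in this case.

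This is exactly why the paper's proof is more delicate. Instead of conjugating by translations, the paper conjugates by linear elements $h_n\in GL(\infty,F_2)$ fixing $v$, which sends $gu$ to $h_ngh_n^{-1}\cdot h_n(u)$. When $u\neq v$ this works easily (the $h_n(u)$ can be made pairwise distinct via Lemma~\ref{lemma: disjoint subsets}), but when $u=v$ and $g\neq e$ the translation part stays put and one must force the \emph{linear} parts $h_ngh_n^{-1}$ to be pairwise distinct. The paper does this by tracking the range $R(h_ngh_n^{-1}-I)=h_n(R(g-I))$: if $R(g-I)$ contains some vector other than $0,v$, one can move that vector around; and in the residual case $R(g-I)=\{0,v\}$ a direct matrix computation with $h_n=I+e_{k,n}$ is needed. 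Your two-case split (on $g=e$ vs.\ $g\neq e$) misses this structure; the correct first split is on $u=v$ vs.\ $u\neq v$.
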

	\begin{proof} 
	Let $g\in GL(\infty,F_2)$, $w \in F_2^\infty$. Suppose that $g\cdot w\notin\{e,v\}$. Consider first the case 
	$v\neq w$. Lemma \ref{lemma: disjoint subsets} implies that there exists a sequence of elements $h_n\in GL(\infty,F_2)$ such that $h_n(v)=v$, but $h_n(w)$ are pairwise distinct for $n\in\mathbb N$. From this we conclude that $h_n\in C_H(\{v\})$, but  the elements $h_n\cdot gw\cdot h_n^{-1} = h_ngh_n^{-1}\cdot h_n(w)$  are pairwise distinct in $H$. This implies that $g \cdot w\notin\fpc(v)$.

	Let then $g\neq e$, $w=v$ and assume that $R(g-I)\neq\{v,0\}$. There exists $u\in F_2^\infty$ such that $g(u)-u\notin \{v,e\}$. Lemma \ref{lemma: disjoint subsets} implies that there exists a sequence of elements $h_n\in GL(\infty,F_2)$ such that $h_n(v)=v$ (and thus $h_n\in C_H(\{v\}))$, but $h_n(g(u)-u)$ are pairwise distinct for $n\in\mathbb N$. Observe that $h_ngh_n^{-1}(h_n(u))=h_n(g(u))$, and thus $h_n(g(u)-u)$ belongs to the range of $h_ngh_n^{-1}-I$. Since for every element $t\in GL(\infty,F_2)$ the matrix $t-I$ has finite range we conclude that  among the elements $h_n\cdot g\cdot v\cdot h_n^{-1}=h_n gh_n^{-1}\cdot h_n(v) = h_n gh_n^{-1}\cdot v $ there is an infinite subset consisting of pairwise distinct elements of $H$. Therefore, $g\cdot v\notin\fpc(v)$.
	
	Finally, assume that $w=v$ and $R(g-I)=\{v,0\}$. Then $g-I=v\cdot r$ for some finitely supported row-vector $r$. Let $m\in \N$ be such that $r_n=v_n=0$ for all $n>m$. Take $k\in \N$ such that $r_k\neq 0$. Set $h_n=I+e_{k,n}$ for $n>m$. Here, $e_{k,n}$ denotes the matrix whose only non-zero entry is at the $(k,n)$-th position. Then $$h_n^{-1}=h_n,\;h_nv=v,\;h_n(g-I)=g-I,\;h_ngh_n^{-1}-I=(g-I)h_n^{-1}=v\cdot r\cdot h_n=v\cdot (r+\delta_n).$$ In particular, $h_ngh_n^{-1}-I$ has $n$-th column equal to $v$, and at the same time all $l$-th columns for $l>m, l \neq n$ equal $0$. It follows that $h_ngh_n^{-1}$ are pairwise distinct for $n>m$ and so $g\cdot v\notin\fpc(v)$. 
\end{proof}

The following factorization lemma will be used later.

	\begin{lemma}\label{lemma: factorization in GL}
	Let $g\in GL(\infty,F_2)\setminus\{e\}$. Then there exists $k\in\mathbb N$ and elements $s_1,\ldots,s_k\in GL(\infty,F_2)$ such that 
	\begin{itemize}
		\item $s_i$ is conjugate to $s=(1,2)$ in $GL(\infty,F_2)$, $i=1,\ldots,k$,
		\item $s_1s_2\cdots s_k=g$,
		\item $R(s_1-I)+R(s_2-I)+\ldots+R(s_k-I)=R(g-I)$.
	\end{itemize}
\end{lemma}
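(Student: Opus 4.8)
The plan is to reduce the factorization of a general $g\in GL(\infty,F_2)\setminus\{e\}$ to a product of transpositions (in the sense of matrices conjugate to $s=(1,2)$, i.e.\ elementary operations swapping two basis vectors) while keeping track of the range of $g-I$. First I would choose $m\in\mathbb N$ large enough that $g$ acts as the identity on all basis vectors $e_j$ with $j>m$, so that $g$ is represented by an invertible $m\times m$ matrix over $F_2$ (padded by the identity), and $R(g-I)\subseteq\mathrm{span}(e_1,\ldots,e_m)$. The finite-dimensional group $GL(m,F_2)$ is generated by transpositions of basis vectors together with elementary transvections $I+e_{i,j}$ ($i\neq j$), so the first step is the standard Gaussian-elimination fact that every $g\in GL(m,F_2)$ is a product of such generators.

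The key observation that makes the range condition work is this: both a transposition $t=(i,j)$ and a transvection $I+e_{i,j}$ have $t-I$ of rank at most $1$, and moreover each such generator $t$ can be realized (or absorbed) so that $R(t-I)$ is contained in the span of two basis vectors that are "active" in $g$. More precisely, I would argue that in the product decomposition $g=t_1\cdots t_k$ one always has $R(t_\ell-I)\subseteq R(g-I)$ — this is automatic because reducing $g$ to the identity by left/right multiplication by the $t_\ell$ never introduces a basis direction outside the column space already spanned by the columns of $g-I$; one can make this precise by running the elimination only within the subspace $V=R(g-I)+(\text{the subspace where }g\text{ acts nontrivially})$, noting $g$ restricts to an automorphism of a complement-stable finite-dimensional space and acts trivially outside. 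Then $R(s_1-I)+\cdots+R(s_k-I)\subseteq R(g-I)$; the reverse inclusion $R(g-I)\subseteq\sum_\ell R(s_\ell-I)$ follows since $g-I=\prod s_\ell - I$ can be expanded telescopically as a sum of terms each lying in $\sum_\ell R(s_\ell-I)$ (using $s_1\cdots s_k - I = \sum_{\ell}(s_1\cdots s_{\ell-1})(s_\ell-I)$ and that left multiplication by an invertible matrix that fixes the relevant directions preserves the span). Finally, since a transvection $I+e_{i,j}$ is itself conjugate in $GL(\infty,F_2)$ to the transposition $(1,2)$ — they are the two types of rank-one perturbations of the identity and over $F_2$ one checks directly that $(1,2)=I+e_{1,2}+e_{2,1}+(\text{diagonal correction})$; more cleanly, every element $h$ with $h\neq I$, $h^2=I$, and $\mathrm{rank}(h-I)=1$ is conjugate to $(1,2)$, and one replaces each transvection by a short product of such involutions if needed — I can arrange that each $s_\ell$ is conjugate to $(1,2)$, as required.

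The main obstacle I anticipate is the bookkeeping in the additivity-of-ranges claim: it is easy to get $\sum_\ell R(s_\ell - I)\subseteq R(g-I)$ by working inside $V$, but the reverse inclusion requires care because a naive Gaussian elimination may momentarily "use" a basis vector $e_p$ with $p\notin\mathrm{supp}$ of the columns of $g-I$ as a pivot, which would enlarge the range sum beyond $R(g-I)$. The fix is to be disciplined about the elimination order — clear columns using pivots drawn from the finite set of indices on which $g$ acts nontrivially, and observe that over $F_2$ inside the finite-dimensional invariant block $W$ (on which $g$ acts without fixed nonzero vectors one can further reduce to, or handle the fixed part separately) the span of $\{g(e_p)-e_p\}$ over the active $p$ is exactly $R(g-I)$, so every elementary step can be chosen with its range inside this span. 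Handling the possibility that $g$ has a nontrivial fixed subspace intersecting the active block is a minor subtlety: one restricts to a $g$-invariant complement of the fixed space within $W$, and the factorization on that piece extends by the identity elsewhere without affecting $R(g-I)$.

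A cleaner route to the same conclusion, which I would present if the direct Gaussian argument gets unwieldy, is by induction on $d=\dim R(g-I)$: if $d=1$ then $g-I$ has rank one, $g$ is a single transvection-or-transposition-type involution (up to the $F_2$ identification above), and we are done with $k=1$; if $d\ge 2$, pick a vector $u$ with $g(u)\neq u$ and a transposition-type $s_1$ such that $g':=s_1 g$ satisfies $\dim R(g'-I)=d-1$ and $R(s_1-I)\subseteq R(g-I)$ — concretely $s_1$ swaps $u$ (or a suitable basis vector) with $g(u)$ so as to kill one dimension of the perturbation — then apply the inductive hypothesis to $g'$, getting $g'=s_2\cdots s_k$ with $\sum_{\ell\ge 2}R(s_\ell-I)=R(g'-I)\subseteq R(g-I)$, and assemble $g=s_1 g'=s_1 s_2\cdots s_k$ while checking $R(g-I)\subseteq R(s_1-I)+R(g'-I)$ from $g-I=(s_1-I)g' + (g'-I)$. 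The inductive formulation localizes all the range bookkeeping to a single step and is, I expect, the version least likely to hide an error.
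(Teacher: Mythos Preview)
Your first approach is essentially the paper's, but you are missing the one move that makes the range bookkeeping trivial: conjugate $g$ at the outset so that $R(g-I)=F_2^n$ exactly (pick $h\in GL(\infty,F_2)$ with $h(R(g-I))=F_2^n$ and replace $g$ by $hgh^{-1}$). After this, $g$ has block form $\begin{bmatrix} g_1 & m\\ 0 & I\end{bmatrix}$ with $g_1\in GL(n,F_2)$, and any factorization produced inside this block automatically has every $R(s_i-I)\subseteq F_2^n=R(g-I)$; the reverse inclusion then follows from the telescoping identity you already wrote down. This single conjugation replaces all of your ``be disciplined about pivots'' hand-waving. From here the paper handles the unipotent piece $\begin{bmatrix} I&m\\0&I\end{bmatrix}$ by transvections $I+E_{ij}$ with $i\le n<j$ (your idea exactly), and for $g_1$ it invokes simplicity of $GL(n,F_2)$ to write it as a product of conjugates of $s$ inside $GL(n,F_2)$; your Gaussian elimination inside $GL(n,F_2)$ would work equally well here and is in fact more elementary. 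Your claim that over $F_2$ every transvection $I+e_{ij}$ is conjugate to $(1,2)$ is correct and is precisely what the paper verifies by an explicit $2\times 2$ computation.

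Your inductive route is genuinely different from the paper, but the key step ``choose $s_1$ swapping $u$ with $g(u)$ so that $\dim R(s_1g-I)=d-1$'' is asserted, not proved. The rank of $s_1g-I$ is a priori either $d-1$ or $d$; to force $d-1$ you must choose the complement $W$ on which $s_1$ acts trivially so that $\ker(g-I)\subseteq \mathrm{Fix}(s_1)=W+\mathrm{span}(u+g(u))$. This \emph{can} be arranged --- one checks that $u\notin\ker(g-I)+\mathrm{span}(u+g(u))$ whenever $g(u)\neq u$, so a hyperplane containing $\ker(g-I)$ and $u+g(u)$ but not $u$ exists --- but it is not automatic for an arbitrary complement, and your sketch does not address it. Without this, the induction does not close.
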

\begin{proof}
Set $n=\dim R(g-I)$. Let $h\in GL(\infty,F_2)$ be such that $h(R(g-I))=F_2^n$. Replacing $g$ by $hgh^{-1}$ without loss of generality we will assume that $R(g-I)=F_2^n$. Then $g$ is of the form 
$\begin{bmatrix}
			g_1 & m \\0 &I
\end{bmatrix}$, where $g_1 \in GL(n,F_2)$ and $m \in F_2^{n\times\infty}$. Assume first that $m \neq 0^\infty$. Observe that $$\begin{bmatrix}
			0&1\\1&1
		\end{bmatrix}
		\begin{bmatrix}
			0&1\\1&0
		\end{bmatrix}
		\begin{bmatrix}
			0&1\\1&1
		\end{bmatrix}^{-1}=
		\begin{bmatrix}
			1&1\\0&1
		\end{bmatrix}.$$
The latter matrix is conjugate to every matrix of the form $I+E_{ij}$, $i<j$. Clearly, there exist $l\in \mathbb{N}$ and matrices $s_1,\ldots,s_l$ of the form $I+E_{ij}$, $1\leqslant i<j$, $i \leqslant n$, such that $s_1s_2\cdots s_l=
		\begin{bmatrix}
			I_n&m\\0&I
		\end{bmatrix}$. 
		
Since $GL(n,F_2)=PSL(n, F_2)$ for $n\geq 3$ is simple \cite[Chapter 8]{Rotman_book}, $g_1$ can be written as a product $s_{l+1}\cdots s_k$ of elements from $GL(n,F_2)$ conjugate to $s$, for some $k\geqslant l$. We obtain:
		$$g=\begin{bmatrix}
			g_1 & m \\0 &I
		\end{bmatrix}=
		\begin{bmatrix}
			I_n&m\\0&I
		\end{bmatrix}
		\begin{bmatrix}
			g_1 & 0\\0 & I
		\end{bmatrix}=s_1s_2\cdots s_k.$$
If $n=1$, then $g_1=1$ and $g=\begin{bmatrix}
			I_n&m\\0&I
		\end{bmatrix}$ 
with $m \neq 0^\infty$, so we can simply use the argument above. Finally if $n=2$, then $g_1\in GL(2, F_2)$ and it is not hard to check that if $g_1 \neq e$ then one can always write  
$\begin{bmatrix}
			g_1 & 0\\0 & I
		\end{bmatrix}$ 
as a product of at most two elements which are both conjugate to $s$. Hence the above decomposition of $g$ as a product of two matrices shows $g$ is indeed a product of several $s_i$, elements which are all conjugate to $s$ for all $n\geq 1$.

By construction, $R(s_i-I)\subseteq F_2^n$ for each $1\leqslant i\leqslant k$. Since for every $g,h\in GL(\infty,F_2)$ one has $R(g_1g_2-I)\subseteq R(g_1-I)+R(g_2-I)$, we have:
$$F_2^n\subseteq R(g-I)\subseteq R(s_1-I)+R(s_2-I)+\ldots+R(s_k-I)\subseteq F_2^n,$$ 
which implies that the latter inclusion is, in fact, an identity. This finishes the proof.
	\end{proof}

Finally we shall quote the description of indecomposable characters of $G$, contained in  \cite{Dudko-2008}, Theorem 1, as a particular case. More preciesly, the above mentioned theorem gives description of indecomposable characters on certain class of groups of infinite matrices $M_{n,m}$ over a finite field $F_q$. The group $G$ is isomorphic to $M_{n,m}$ in the case $n=1$, $m=0$, $q=2$. 

\begin{theorem}\label{theorem: characters Gl times F2infty}
Indecomposable characters on $GL(\infty,F_2)\ltimes  F_2^\infty$ are indexed by $\Z_+^\infty \times \{0,1\}$. For every $(k,d) \in \Z_+^\infty \times \{0,1\}$ and $g\in GL(\infty,F_2),v\in  F_2^\infty$ we have  
\begin{equation}\chi_{k,1}(g\cdot v)=2^{-k\,\mathrm{rank}(g-I)},\end{equation}
	\begin{equation}
		\chi_{k,0}(g\cdot v)=\left\{\begin{array}{ll}2^{-k\,\mathrm{rank}(g-I)},&\text{if}\;\;v\in R(g-I),\\
			0,&\text{otherwise}
		\end{array}.\right.
	\end{equation}
\end{theorem}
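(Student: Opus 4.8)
This statement is a particular case of \cite[Theorem~1]{Dudko-2008}: as recalled just above, $G=GL(\infty,\F_2)\ltimes\F_2^\infty$ is the group $M_{n,m}(\F_q)$ of that reference with $n=1$, $m=0$, $q=2$, so what one wants here is an outline of the strategy behind that classification. The natural framework is the ergodic (Vershik--Kerov) method: for the countable locally finite group $G=\varinjlim_N\bigl(GL(N,\F_2)\ltimes\F_2^N\bigr)$, the indecomposable characters are the extreme points of the convex set of normalised, $G$-invariant, positive-definite functions on $G$, equivalently (via the GNS construction) those associated with factor representations $\pi$ generating a finite von Neumann factor, equivalently the ergodic central measures on the path space of the branching graph of the finite groups $GL(N,\F_2)\ltimes\F_2^N$. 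The plan is to organise such a character $\chi$ according to its restriction to the abelian normal subgroup $V:=\F_2^\infty$, on which $GL(\infty,\F_2)$ acts transitively on $V\setminus\{0\}$.

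First I would determine $\chi|_V$. It is a normalised, $GL(\infty,\F_2)$-invariant, positive-definite function on $V$, hence by Bochner's theorem the Fourier transform of a probability measure $\mu$ on $\widehat V\cong\prod_{\N}\Z_2$, and this $\mu$ is $GL(\infty,\F_2)$-invariant; moreover it is ergodic, since a non-trivial $GL(\infty,\F_2)$-invariant projection in the abelian von Neumann algebra generated by the image of $V$ would lie in the centre of the factor $\pi(G)''$. To pin down $\mu$ I would pass to finite marginals: the marginal of $\mu$ on $\F_2^N$ is $GL(N,\F_2)$-invariant, and $GL(N,\F_2)$ has exactly two orbits on $\F_2^N$ (namely $\{0\}$ and its complement), so this marginal equals $\lambda_N\delta_0+(1-\lambda_N)\,\mathrm{Unif}(\F_2^N\setminus\{0\})$ for some $\lambda_N\in[0,1]$. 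Writing out the compatibility of consecutive marginals yields a recursion for $(\lambda_N)_N$ whose extreme (equivalently ergodic) solutions are only $\lambda_N\equiv 1$, i.e.\ $\mu=\delta_0$, and $\lambda_N=2^{-N}$, i.e.\ $\mu=$ Haar. This dichotomy is the source of the label $d\in\{0,1\}$.

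In the case $\mu=\delta_0$ the representation is trivial on $V$, so $\chi$ factors through $G/V\cong GL(\infty,\F_2)$, and one invokes Skudlarek's classification of the characters of $GL(\infty,\F_2)$, namely $\chi_k(g)=2^{-k\,\mathrm{rank}(g-I)}$ for $k\in\Z_+^\infty$; pulling this back gives the family $\chi_{k,1}(g\cdot v)=2^{-k\,\mathrm{rank}(g-I)}$. In the case $\mu=$ Haar one has $\chi|_V=\delta_e$, and I would first argue that $\chi(g\cdot v)=0$ whenever $v\notin R(g-I)$: for such $v$ the orbit of $v$ under the centraliser $C_{GL(\infty,\F_2)}(g)$ is infinite (a computation in the spirit of Lemma~\ref{lem:fpc(v)}), so one can choose $h_n\in C_{GL(\infty,\F_2)}(g)$ with $h_n(v)$ pairwise distinct; then the conjugates $h_n(g\cdot v)h_n^{-1}=g\cdot h_n(v)$ are pairwise distinct with all mutual ratios lying in $V\setminus\{e\}\subseteq\{\chi=0\}$, and \cite[Lemma~2.7]{dudko2024character} forces $\chi(g\cdot v)=0$. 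For $v\in R(g-I)$, conjugating $g$ by elements of $V$ shows that $g\cdot v$ is conjugate to $g$ in $G$, hence $\chi(g\cdot v)=\chi(g)$, and the value $\chi(g)$ — controlled by the character of the pertinent little group — is again of the form $2^{-k\,\mathrm{rank}(g-I)}$ with $k\in\Z_+^\infty$; this gives the family $\chi_{k,0}$. Finally one checks that all functions on the list really are positive-definite (for instance by exhibiting each $\chi_{k,d}$ as a weak limit of normalised characters of the finite groups $GL(N,\F_2)\ltimes\F_2^N$), so that the list is exhaustive.

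The genuinely hard part is not the bookkeeping above but the two uses of the ergodic method hidden inside it: proving Skudlarek's theorem for $GL(\infty,\F_2)$ and computing the value $2^{-k\,\mathrm{rank}(g-I)}$ in the Haar case through the branching graph of the little group. In both one must exploit the multiplicative structure of the tower of group algebras (in Zelevinsky's Hopf-algebra language for the $GL$-part) to rule out continuous one-parameter families and isolate the integer (or $\infty$) parameter $k$. This is exactly what is carried out in \cite{Dudko-2008}, to which we refer for the complete argument.
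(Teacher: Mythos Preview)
The paper does not prove this theorem at all: it is simply quoted as a special case of \cite[Theorem~1]{Dudko-2008} (the group $M_{n,m}(\F_q)$ there with $n=1$, $m=0$, $q=2$), with no argument given. Your proposal therefore goes well beyond what the paper does, supplying a reasonable outline of the ergodic/Mackey-style strategy behind the cited result before ultimately deferring to the same reference; since both you and the paper end by invoking \cite{Dudko-2008}, there is no discrepancy to adjudicate.
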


\subsection*{Invariant von Neumann subalgebras of $L(\aut_\fin (\Z_2^\infty)\ltimes \Z_2^\infty)$}

We will now study invariant von Neumann subalgebras in $L(G)$. First, however, consider just the `acting' group, $GL(\infty,F_2)$. We show that it satisfies the ISR-property.
\begin{proposition}\label{prop:(GLinfty,F2)_has_ISR}
	The group $GL(\infty,F_2)$ has ISR.
\end{proposition}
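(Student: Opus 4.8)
The plan is to imitate, in the simpler setting without the abelian part, the case analysis carried out in the proof of Theorem~\ref{th: S(2^infty) and ISR}, but now using the richer supply of indecomposable characters of $GL(\infty,F_2)$ and the factorization Lemma~\ref{lemma: factorization in GL}. Fix an invariant von Neumann subalgebra $\mathcal M\subseteq L(GL(\infty,F_2))$ with $\tau$-preserving conditional expectation $E$, and consider the character $\chi(g)=\tau(g^{-1}E(g))$ coming from Proposition~\ref{prop: E properties}(4). Since $GL(\infty,F_2)$ is simple, the only normal subgroups are $\{e\}$ and the whole group, so it suffices to show $\mathcal M\in\{\mathbb C 1, L(GL(\infty,F_2))\}$.

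First I would recall (or record as a lemma) the indecomposable characters of $GL(\infty,F_2)$: they are the restrictions to $GL(\infty,F_2)<G$ of the characters $\chi_{k,d}$ of Theorem~\ref{theorem: characters Gl times F2infty}, hence are parametrized by $k\in\Z_+^\infty$ via $\chi_k(g)=2^{-k\,\mathrm{rank}(g-I)}$ (with $\chi_\infty=\delta_{g,e}$ the regular character, interpreting $2^{-\infty\cdot r}=0$ for $r>0$); this is the $GL(\infty,F_q)$ character classification, available e.g.\ from \cite{Dudko-2008}. By the standard central decomposition of characters (\cite{Takesaki-OperatorAlgebras}, Ch.~IV, Thm.~8.21) write $\chi=\sum_{k\in\Z_+^\infty}\alpha_k\chi_k$ with $\alpha_k\geq 0$, $\sum_k\alpha_k=1$. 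Now evaluate at the transposition $s=(1,2)$, which has $\mathrm{rank}(s-I)=1$: we get $\chi(s)=\sum_{k\in\Z_+}\alpha_k 2^{-k}\geq 0$. The key dichotomy is whether $\chi(s)=1$ or $\chi(s)<1$.

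If $\chi(s)=1$, then since each $\chi_k(s)=2^{-k}<1$ for $k\geq 1$ and $\chi_0\equiv 1$, we must have $\alpha_0=1$, so $\chi\equiv 1$ on $GL(\infty,F_2)$; by Proposition~\ref{prop: E properties}(5) this forces $E(g)=g$ for all $g$, i.e.\ $\mathcal M=L(GL(\infty,F_2))$. If on the other hand $\chi(s)<1$, I claim $E(g)=0$ for every $g\neq e$. By Lemma~\ref{lemma: fpc application}, $E(s)\in L(\fpc(s))$; one checks, much as in Proposition~\ref{prop: fpc of involution in S(2^infty)} but more easily here (since there is no abelian part to worry about, and $\fpc(s)$ for a transposition consists of $e$, $s$, and possibly a few further elements of bounded rank — this is the analogue to be verified by an explicit conjugation argument using elements $h_n=I+e_{k,n}$ commuting with $s$), that $\fpc(s)$ is a finite set of elements $g$ with $\mathrm{rank}(g-I)\le 1$. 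Then $E(s)$ is a finite combination of such group elements, and feeding this back through $\chi$ and the central decomposition: every indecomposable component $\chi_k$ with $k\in\Z_+$ assigns these elements values that force, together with $\chi(s)<1$, that $\alpha_k=0$ for all $k\in\Z_+$; hence $\chi=\chi_\infty=\delta_{\cdot,e}$ on $GL(\infty,F_2)$, so $E(g)=0$ for $g\neq e$, whence $E=\tau(\cdot)1$ and $\mathcal M=\mathbb C1$.

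The main obstacle I anticipate is the middle step: upgrading the information ``$\chi$ restricted to $GL(\infty,F_2)$ has no $\chi_k$-component for $k\in\Z_+$'' from the single transposition $s$ to a genuine conclusion, i.e.\ ruling out the mixed case where $\chi$ is a nontrivial convex combination of $\chi_k$'s with $\chi(s)$ strictly between $0$ and $1$. Handling this cleanly is exactly where Lemma~\ref{lemma: factorization in GL} enters: writing an arbitrary $g\neq e$ as $g=s_1\cdots s_k$ with each $s_i$ conjugate to $s$ and $R(g-I)=\sum_i R(s_i-I)$, one shows $\fpc$-type support constraints propagate and that $\chi$ must be purely $\chi_\infty$ once $E(s)\neq s$; this is the analogue of the ``$\chi(g)=0$ on $S(2^\infty)$'' argument in case ${\bf 2)}$ of the proof of Theorem~\ref{th: S(2^infty) and ISR}, and I expect it to be the technically delicate point, requiring a careful choice of commuting $h_n$'s of the form $I+e_{k,n}$ to separate conjugates and an application of \cite[Lemma~2.7]{dudko2024character} (or its matrix-group analogue) to deduce $\chi(g)=0$.
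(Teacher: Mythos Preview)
Your dichotomy ``$\chi(s)=1$ versus $\chi(s)<1$'' is where the argument breaks. The case $\chi(s)=1$ is fine, but when $\chi(s)<1$ you have no mechanism to exclude the intermediate situation $0<\chi(s)<1$. Concretely, nothing you have written rules out $\chi=\chi_1$ (i.e.\ $\alpha_1=1$, all other $\alpha_k=0$), for which $\chi(s)=1/2$. Your proposed fix via Lemma~\ref{lemma: factorization in GL} and \cite[Lemma~2.7]{dudko2024character} is circular: that lemma lets you conclude $\chi(g)=0$ only once you already know $\chi$ vanishes on a suitable infinite family of ``differences'' $g_ng_k^{-1}$, and you have no such vanishing in hand when $\chi(s)$ is merely less than $1$. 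The factorization lemma likewise does not bridge this gap; it is used in the paper for a different purpose (Lemma~\ref{prop: (01 10)E((01 10)) other}) and presupposes that $E(s_i)$ has already been pinned down.

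The paper's proof closes this gap by a direct combinatorial argument that \emph{forces} $E(s)\in\mathbb C s$, hence $\chi(s)\in\{0,1\}$, \emph{before} invoking any character decomposition. One first observes $\fpc(s)\subseteq GL(2,F_2)$, so $E(s)=\sum_{g\in GL(2,F_2)}\lambda_g g$ with $\lambda_e=0$. Then, since $s^{-1}E(s)\in\mathcal M'$ (Proposition~\ref{prop: E properties}(3)), it commutes with $\omega E(s)\omega^{-1}\in\mathcal M$ for $\omega=(123)$. Expanding both products $s^{-1}E(s)\cdot\omega E(s)\omega^{-1}$ and $\omega E(s)\omega^{-1}\cdot s^{-1}E(s)$ and comparing the $(1,3)$ and $(3,1)$ matrix entries of the group elements appearing (no cancellation is possible since $GL(2,F_2)\cap\omega GL(2,F_2)\omega^{-1}=\{I\}$), one finds a contradiction unless $\lambda_g=0$ for every $g\neq s$. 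Thus $E(s)=\lambda_s s$ with $\lambda_s\in\{0,1\}$, and only then does the character argument (using Skudlarek's classification) finish the proof.
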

\begin{proof}
Let $\mathcal P$ be an invariant von Neumann subalgebra of $L(GL(\infty,F_2))$. Let $E$ be the $\tau$-preserving conditional expectation onto $\mathcal{P}$. Consider the subgroup $GL(2,F_2)=\{\{g_{ij}\}_{i,j\in\mathbb N}\in GL(\infty,F_2):g_{ij}=\delta_{ij}\;\;\text{whenever}\;\;i>2\;\;\text{or}\;\;j>2\}$. It is not hard to see that for any $g\in GL(2,F_2)$ one has $\fpc(g)\subseteq GL(2,F_2)$. 

For $s=\begin{bmatrix}
	0&1\\1&0
\end{bmatrix}$
we have 
$$E\left(s\right)=\sum\limits_{g\in GL(2,F_2)}\lambda_g g$$
for some scalars $\lambda_g \in \mathbb{C}$, with $\lambda_e=0$. Let $\omega\in GL(\infty,F_2)$ be the map acting on the basis of $F_2^\infty$ by sending $e_1$ to $e_2$, $e_2$ to $e_3$, $e_3$ to $e_1$, and $e_j$ to $e_j$ for each $j>3$, \ie $\omega=\begin{bmatrix}
	0&0&1\\
	1&0&0\\
	0&1&0
\end{bmatrix}$. Observe that the action $g\to \omega g\omega^{-1}$ permutes simultaneously the rows and the columns of $g$ via the cyclic permutation $(123)$. In particular, 
$$\omega GL(2,F_2)\omega^{-1}=\{\{g_{ij}\}_{i,j\in\mathbb N}\in GL(\infty,F_2):g_{ij}=\delta_{ij}\;\;\text{whenever}\;\;i\notin\{2,3\}\;\;\text{or}\;\;j\notin\{2,3\}\}.$$ 
Since $s^{-1}E(s)\in\mathcal P'$, it commutes with $\omega E(s)\omega^{-1}=E(\omega s\omega^{-1}).$ Consider the product $$s^{-1}E(s)\cdot \omega E(s)\omega^{-1}=\sum\limits_{g\in GL(2,F_2)}\lambda_g s^{-1}g\cdot \sum\limits_{h\in GL(2,F_2)}\lambda_h \omega h\omega^{-1}.$$ Observe that when calculating the latter product, no cancellation is possible, since $GL(2,F_2)\cap \omega GL(2,F_2)\omega^{-1}=\{I\}$, where $I$ is the identity matrix. Notice that for any $A\in GL(2,F_2),B\in \omega GL(2,F_2)\omega^{-1}$ given $C=AB$ and $D=BA$ one has $C_{31}=0$ and $D_{13}=0$. 
		
Assume now that $\lambda_g\neq 0$ for some $g \in GL(2,F_2)$, $g\neq s$.  Since $s^{-1}E(s)$ is invariant with respect to conjugation by $s$, without loss of generality we may assume that $A=s^{-1}g$ has $A_{12}=1$; in other words, $g = \begin{bmatrix} a' & b' \\ c' &1 \end{bmatrix}$. As $g\neq I$, and $E$ is $^*$-preserving, there exists $h\in GL(2,F_2)$ with $h_{12}=1$ and $\lambda_h\neq 0$ (either $h=g$ or $h=g^{-1}$). Then $B=\omega h\omega^{-1}$ has $B_{23}=1$. It follows 
that $C=AB$ has $C_{13}=1$. We obtain that the Fourier expansion of $s^{-1}E(s)\cdot \omega E(s)\omega^{-1}$ contains an element $C\in GL(\infty,F_2)$ with a nonzero coefficient $\lambda_g\lambda_h$ such that $C$ does not appear in the Fourier expansion for $\omega E(s)\omega^{-1}\cdot s^{-1}E(s)$.

This contradiction shows that $E(s)=\lambda_s s$. Clearly, either $E(s)=0$ or $E(s)=s$.
		\vskip 0.1cm
\noindent ${\bf a)}$ $E(s)=s$. Since $GL(\infty,F_2)=SL(\infty,F_2)$ is simple as already mentioned above,
this implies that $E(g)=g$ for all $g\in GL(\infty,F_2)$ and so $\mathcal P=L(GL(\infty,F_2))$.\
		\vskip 0.1cm
\noindent ${\bf b)}$ $E(s)=0$. Consider the character $\chi(g)=\tau(g^{-1}E(g))$.  By Skudlarek's description of indecomposable characters in \cite{Skudlarek}, each non-regular indecomposable character on $GL(\infty,F_2)=SL(\infty,F_2)$ has the form $\chi_m(g)=2^{-m{\rm rank}(g-I)}$, ($ g \in GL(\infty,F_2)$) for some $m\in\mathbb Z_+$ and thus takes a strictly  positive value at $s$. It follows that $\chi$ is the regular character and $\mathcal P=\mathbb C 1$. 	
\end{proof}

Before we consider the general case, we shall introduce some more notation. It will be convenient to identify the algebra $L( F_2^\infty)$ with the algebra $L^\infty(\widehat{ F}_2^\infty)$, where $\widehat{F}_2$ is the dual space of $F_2$. One can write $\widehat{F}_2=\{\eta_0,\eta_1\}$, where $\eta_0$ is the trivial character and $\eta_1(z)=(-1)^z$ for $z\in F_2$. Given $i\in\{0,1\}$ and $j\in\mathbb N$ denote by $\delta_i^{(j)}\in L^\infty(\widehat{ F}_2^\infty)$ the function $\delta_i^{(j)}(x)=\delta_{x_j,\eta_i}$. Observe that for any $j$ for the $j$th basis vector $e_j\in F_2^\infty$ viewing $u_{e_j}\in L(F_2^\infty)$ as element of $L^\infty(\widehat{F}_2^\infty)$ one has:
\begin{equation}\label{eq: delta_i^j}
	\delta_0^{(j)}=\tfrac{1}{2}(1+u_{e_j}),\;\; \delta_1^{(j)}=\tfrac{1}{2}(1-u_{e_j}),\;\;\text{so}\;\;\delta_i^{(j)}=\tfrac{1}{2}(1+(-1)^iu_{e_j}),\;i\in\{0,1\}.
\end{equation} For convenience of writing elements of $L^\infty(\widehat{F}_2^\infty)$, we introduce a formal symbol $\star$ such that $\delta_\star=\id\in L^\infty(\widehat{F}_2^\infty)$. 
Note that $\{[w]: w\in \{0,1\}^k\}$ is a spanning set for $L(F_2^k)$.
For a finite vector-row $${\rm w=(w_1,w_2,\ldots,w_k)}$$ with entries from the set $\{0,1,\star\}$ denote $${\rm [w]=[w_1,\ldots,w_k]}\in L^\infty(\widehat{ F}_2^\infty),\;\; [{\rm w}]=\prod\limits_{1\leqslant j\leqslant k} \delta_{{\rm w}_j}^{(j)}=\prod\limits_{j:{\rm w}_j\neq \star} \delta_{{\rm w}_j}^{(j)}.$$ In particular, $L(F_2^k)$ embeds into $L(F_2^{k+1})$ within $L(F_2^\infty)=L^\infty(\widehat{F}_2^\infty)$ via the formula $${\rm [w_1,\ldots,w_k]}\to {\rm [w_1,\ldots,w_k,\star]}.$$

Notice that for any ${\rm w,v}\in\{0,1,\star\}^k$ one has ${\rm [w]\cdot [v]}=0$ if there exists $1\leqslant j\leqslant k$ such that simultaneously ${\rm w}_j\neq {\rm v}_j$, ${\rm w}_j\neq\star$, and ${\rm v}_j\neq\star$. If ${\rm [w]\cdot[v]}\neq 0$, then one has:
\begin{equation}\label{eq:[w]*[u]}
	{\rm [w]\cdot [v]=[z]},\;\;\text{where}\;\;{\rm z}_j={\rm w}_j,\;\;\text{if}\;\;{\rm w}_j={\rm v}_j\;\;\text{or}\;\;{\rm v}_j=\star,\;\;\text{and}\;\;{\rm z}_j={\rm v}_j\;\;\text{otherwise}.
\end{equation}
Using \eqref{eq: delta_i^j} it is straightforward to verify that for any vector-row ${\rm w}\in\{0,1\}^n$ in $L( F_2^\infty)=L^\infty(\widehat{ F}_2^\infty)$ the following holds:
\begin{equation}\label{eq: [w] expansion}
	[{\rm w}]=\tfrac{1}{2^n}\sum\limits_{v\in F_2^n}(-1)^{{\rm w}\cdot v}u_v,
\end{equation} where ${\rm w}\cdot v=\sum\limits_{1\leqslant j\leqslant n}{\rm w}_jv_j\in F_2$.

\begin{lemma}\label{lemma: g[w]/g}
	Let ${\rm w}\in\{0,1,\star\}^n$ and $g\in GL(\infty,F_2)$ be such that for each $i$ with ${\rm w}_i=\star$ the $i$-th row of $g^{-1}$ contains only one nonzero element. Then $u_g[{\rm w}]u_g^*=[{\rm w}\cdot g^{-1}]$.  Here, when calculating the entries of ${\rm w}\cdot g^{-1}$ concerning the symbol $\star$ the following rules are applied: $$\star\cdot 0=0,\;\star\cdot 1=\star+0=\star+1=\star+\star=\star.$$
\end{lemma}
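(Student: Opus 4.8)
Put $J=\{j\le n:{\rm w}_j\ne\star\}$, so that $[{\rm w}]=\prod_{j\in J}\delta_{{\rm w}_j}^{(j)}$. The plan is to regard every relevant element of $L(F_2^\infty)=L^\infty(\widehat{F}_2^\infty)$ as the indicator function of a cylinder‑type affine subspace and to transport it by the conjugation action of $g$. Identify $\widehat{F}_2$ with $F_2$ (via $\eta_i\leftrightarrow i$), so that $\widehat{F}_2^\infty=\prod_{\mathbb N}F_2$, and write $\langle x,v\rangle=\sum_k x_kv_k\in F_2$ for the canonical pairing of $x\in\widehat{F}_2^\infty$ with $v\in F_2^\infty=\bigoplus_{\mathbb N}F_2$. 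Then $\delta_i^{(j)}=\mathds{1}_{\{x:\,x_j=i\}}$, so that
\[
[{\rm w}]=\mathds{1}_{A_{\rm w}},\qquad A_{\rm w}:=\{x\in\widehat{F}_2^\infty:\ x_j={\rm w}_j\text{ for all }j\in J\}.
\]
Since $u_gu_vu_g^{*}=u_{g(v)}$ and $u_v$ corresponds to the character $x\mapsto(-1)^{\langle x,v\rangle}$, conjugation by $u_g$ acts on $L^\infty(\widehat{F}_2^\infty)$ as precomposition with the transpose, $u_gfu_g^{*}=f\circ g^{\mathsf{T}}$, where $g^{\mathsf{T}}\colon\widehat{F}_2^\infty\to\widehat{F}_2^\infty$, $(g^{\mathsf{T}}x)_j=\sum_k g_{kj}x_k$, is a bijection (each column of $g$ has finite support, so these sums are finite). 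Hence $u_g[{\rm w}]u_g^{*}=\mathds{1}_{(g^{\mathsf{T}})^{-1}(A_{\rm w})}$, and unravelling the definition,
\[
(g^{\mathsf{T}})^{-1}(A_{\rm w})=\{x\in\widehat{F}_2^\infty:\ \langle x,g(e_j)\rangle={\rm w}_j\text{ for all }j\in J\},
\]
where $g(e_j)\in F_2^\infty$ is the $j$-th column of $g$.

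Next I would exhibit both $u_g[{\rm w}]u_g^{*}$ and $[{\rm w}g^{-1}]$ as one and the same coset. Let $\widetilde{\rm w}\in F_2^\infty$ be the ordinary $F_2$‑matrix product of the finitely‑supported row $({\rm w}_j)_{j\in J}$ (zero outside $J$) with $g^{-1}$, i.e.\ $\widetilde{\rm w}_\ell=\sum_{j\in J}{\rm w}_j(g^{-1})_{j\ell}$; this has finite support, hence is naturally an element of $\widehat{F}_2^\infty$. A one‑line computation using $g^{-1}g=I$ shows $\langle\widetilde{\rm w},g(e_j)\rangle={\rm w}_j$ for every $j\in J$, i.e.\ $\widetilde{\rm w}$ solves the affine system above, whence
\[
u_g[{\rm w}]u_g^{*}=\mathds{1}_{\widetilde{\rm w}+V^\perp},\qquad V:=\operatorname{span}\{g(e_j):j\in J\}.
\]
On the other hand, unwinding the $\star$‑arithmetic shows that the $\ell$-th entry of ${\rm w}g^{-1}$ is $\star$ exactly when the $\ell$-th column of $g^{-1}$ has a nonzero entry in some row $i$ with ${\rm w}_i=\star$ (i.e.\ $i\notin J$), and otherwise equals $\widetilde{\rm w}_\ell$; since $g^{-1}-I$ has finite support and $J\subseteq\{1,\dots,n\}$, the set $L:=\{\ell:({\rm w}g^{-1})_\ell\ne\star\}$ is finite, and therefore
\[
[{\rm w}g^{-1}]=\mathds{1}_{A_{{\rm w}g^{-1}}}=\mathds{1}_{\widetilde{\rm w}+W^\perp},\qquad W:=\operatorname{span}\{e_\ell:\ell\in L\}.
\]
Thus it suffices to prove $V=W$; then $V^\perp=W^\perp$ and the two indicator functions coincide.

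The identification $V=W$ is the heart of the matter, and the only place the hypothesis is used. For each $i\notin J$ we have ${\rm w}_i=\star$, so by assumption the $i$-th row of $g^{-1}$ has exactly one nonzero entry, say $(g^{-1})_{i,c_i}=1$. From the description of $L$ one reads off that $\{c_i:i\notin J\}$ is precisely the complement of $L$ (and $i\mapsto c_i$ is injective, since the invertible matrix $g^{-1}$ has no two equal rows). Moreover $(g^{-1}u)_i=u_{c_i}$ for $u\in F_2^\infty$ and $i\notin J$. Consequently, for $u\in F_2^\infty$,
\[
u\in V\ \Longleftrightarrow\ g^{-1}u\in\operatorname{span}\{e_j:j\in J\}\ \Longleftrightarrow\ u_{c_i}=0\text{ for all }i\notin J\ \Longleftrightarrow\ u\in W,
\]
so $V=W$, which completes the argument.

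The genuinely nontrivial step is the last one; everything else is bookkeeping, although one must be careful to keep track of the two distinct objects $F_2^\infty=\bigoplus_{\mathbb N}F_2$ and $\widehat{F}_2^\infty=\prod_{\mathbb N}F_2$ (in particular of where $\widetilde{\rm w}$ lives), and of the finiteness of $J$ and $L$, which is what makes all the pairings and matrix sums finite. It is also worth noting that the single‑nonzero‑entry hypothesis is genuinely necessary: without it the off‑$J$ rows of $g^{-1}$ may cancel, so that $V$ properly contains $W$ and $u_g[{\rm w}]u_g^{*}$ need not be of cylinder form $[{\rm w}']$ at all.
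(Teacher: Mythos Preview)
Your argument is correct and takes a genuinely different route from the paper's. The paper proceeds by case reduction: it first handles the star-free case ${\rm w}\in\{0,1\}^n$, $g\in GL(n,F_2)$ by direct computation with the Fourier expansion $[{\rm w}]=2^{-n}\sum_v(-1)^{{\rm w}\cdot v}u_v$, then treats permutation matrices $g\in S_\infty$ by inspection, and finally reduces the general case to these two by conjugating with a suitable permutation so that the $\star$-entries sit at the end and $g$ acquires a block-upper-triangular form $\begin{bmatrix}g_1&h\\0&g_2\end{bmatrix}$ with $g_2\in S_{n-k}$. Your approach instead interprets $[{\rm w}]$ as the indicator of an affine subspace of $\widehat F_2^\infty$, transports it by the dual action $f\mapsto f\circ g^{\mathsf T}$, and reduces everything to the single identity $V=W$ of finite-dimensional subspaces of $F_2^\infty$. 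This is more conceptual and handles all cases at once; it also makes transparent exactly where the ``one nonzero entry per $\star$-row'' hypothesis enters (namely in the equivalence $(g^{-1}u)_i=u_{c_i}$ for $i\notin J$), whereas in the paper's proof the hypothesis is used to guarantee the block shape after permuting coordinates. The paper's argument, on the other hand, is more elementary in that it never leaves the group algebra picture and avoids any discussion of the dual pairing between $\bigoplus_{\mathbb N}F_2$ and $\prod_{\mathbb N}F_2$. One minor remark: the injectivity of $i\mapsto c_i$ that you note is true but not actually needed for the displayed chain of equivalences, since you only use that $\{c_i:i\notin J\}=\mathbb N\setminus L$ as a set.
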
 
\begin{proof}
	First, let us prove the lemma in the case when ${\rm w}\in\{0,1\}^n$ and $g\in GL(n,F_2)$. By \eqref{eq: [w] expansion}, one has:
	\begin{align*}
		u_g[{\rm w}]u_{g^{-1}}&=\tfrac{1}{2^n}u_g\sum\limits_{v\in F_2^n}(-1)^{{\rm w}\cdot v}u_vu_{g^{-1}}\\&=\tfrac{1}{2^n}\sum\limits_{v\in F_2^n}(-1)^{{\rm w}\cdot v}u_{g(v)}\\&=\tfrac{1}{2^n}\sum\limits_{y\in F_2^n}(-1)^{{\rm w}\cdot g^{-1}(y)}u_y\\&=[{\rm w}g^{-1}].
	\end{align*}
	
	Next, notice that the claim of Lemma \ref{lemma: g[w]/g} is straightforward in the case when $g\in S_\infty\subseteq GL(\infty,F_2)$, \ie when $g$ has exactly one nonzero element (equal to 1) in each row and in each column. In this case $g$ acts by permuting the coordinates of ${\rm w}$.  
	
	Consider the general case. Let ${\rm w}\in\{0,1,\star\}^n$ and $g\in GL(\infty,F_2)$ satisfy the conditions of Lemma \ref{lemma: g[w]/g}. By replacing $[{\rm w}]$ with $[{\rm w}s]$ and $g$ with $s^{-1}gs$ for an appropriate $s\in S_\infty$ we can assume that ${\rm w}_i\in \{0,1\}$ for $1\leqslant i\leqslant k$ and ${\rm w}_i=\star$ for $k+1\leqslant i\leqslant n$ and $g$ is of the form $g=\begin{bmatrix}g_1&h\\0&g_2\end{bmatrix}$ for some $g_1\in GL(k,F_2)$, $g_2\in S_{n-k}$ and $h$ an $k\times (n-k)$ matrix over $F_2$. Then, factoring out the permutation $g_2$ of the coordinates $\{k+1,\ldots,n\}$ we may assume that $g_2$ is the trivial permutation (\ie equal to the identity matrix). Next, using already proven case $w\in\{0,1\}^n$, we can factor out $g_1$. It remains to treat the case $g_1=g_2=I$.
	
	Observe that ${\rm w}$ as above is a linear combination of terms of the form $u_v$, $v\in F_2^k$. Given $g$ as above with $g_1=g_2=I$ we have $g(v)=v$ and ${\rm w}g={\rm w}$. It follows that $u_gu_vu_{g^{-1}}=u_v$ for every $v\in F_2^k$ and $u_g[{\rm w}]u_{g^{-1}}=[{\rm w}]=[{\rm w}g^{-1}]$, which finishes the proof.
\end{proof}

Fix now an invariant von Neumann subalgebra $\mathcal{M}\subseteq L(G)$, and denote by $E:L(G) \to \mathcal{M}$ the unique $\tau$-preserving conditional expectation.

\begin{lemma}\label{cor: E(v)=0 or v}
Either $E(u_v)=u_v$ for all $v\in F_2^\infty$, or $E(u_v)=0$ for all $v\in F_2^\infty\setminus \{0^\infty\}$.
\end{lemma}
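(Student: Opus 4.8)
The plan is to exploit the transitivity of the $GL(\infty,F_2)$-action on $F_2^\infty\setminus\{0^\infty\}$ together with the structural results already proved, namely Lemma \ref{lem:fpc(v)}, which gives $\fpc(v)=\{v,e\}$, and Lemma \ref{lemma: fpc application}, which then forces $E(u_v)\in L(\{e,v\})=\operatorname{span}\{1,u_v\}$. So for each nonzero $v$ we may write $E(u_v)=\alpha_v 1+\beta_v u_v$ for scalars $\alpha_v,\beta_v$. Invariance of $\mathcal M$ under conjugation by $g\in GL(\infty,F_2)$, via Proposition \ref{prop: E properties}(2), gives $u_g E(u_v) u_g^*=E(u_{g(v)})$, hence $\alpha_v 1+\beta_v u_{g(v)}=\alpha_{g(v)}1+\beta_{g(v)}u_{g(v)}$; comparing Fourier coefficients and using transitivity of the action on $F_2^\infty\setminus\{0^\infty\}$, we conclude that $\alpha_v=:\alpha$ and $\beta_v=:\beta$ do not depend on $v$. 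Thus $E(u_v)=\alpha 1+\beta u_v$ for every nonzero $v$, with $\alpha,\beta$ fixed.

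Next I would pin down $\alpha$ and $\beta$ using that $E$ is a trace-preserving conditional expectation. Trace preservation gives $\tau(E(u_v))=\tau(u_v)=0$ for $v\neq 0^\infty$, so $\alpha=0$ immediately, and we are left with $E(u_v)=\beta u_v$ for all nonzero $v$, with a single scalar $\beta$. To determine $\beta$, I would use idempotency of $E$ together with multiplicativity on the abelian algebra $L(F_2^\infty)$: pick two distinct nonzero vectors $v,w$ with $v+w\neq 0^\infty$ (possible since $F_2^\infty$ is infinite, e.g. $v=e_1$, $w=e_2$). On one hand $E(u_v)E(u_w)=\beta^2 u_{v+w}$; on the other hand, since $E$ is a conditional expectation onto $\mathcal M$ and $E(u_v),E(u_w)\in\mathcal M$, we have $E(u_v)E(u_w)=E\bigl(E(u_v)u_w\bigr)=\beta E(u_v u_w)=\beta E(u_{v+w})=\beta^2 u_{v+w}$, which is consistent but not yet decisive — so instead I would directly apply $E$ once more to $E(u_v)=\beta u_v$: idempotency gives $\beta u_v=E(\beta u_v)=\beta E(u_v)=\beta^2 u_v$, whence $\beta^2=\beta$, i.e. $\beta\in\{0,1\}$. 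If $\beta=1$ then $E(u_v)=u_v$ for all $v\in F_2^\infty$ (the case $v=0^\infty$ being trivial since $u_{0^\infty}=1\in\mathcal M$), and if $\beta=0$ then $E(u_v)=0$ for all nonzero $v$; these are exactly the two alternatives in the statement.

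The only genuinely delicate point is the first paragraph: making sure that the hypotheses of Lemma \ref{lemma: fpc application} apply to all $v\in F_2^\infty<G$, which they do because $v$ is an element of the group $G$ and $\fpc(v)=\{v,e\}$ by Lemma \ref{lem:fpc(v)}; and then correctly tracking how conjugation by $GL(\infty,F_2)$ permutes the $u_v$. Everything after that is a short computation with scalars, using only trace-preservation and idempotency of $E$, so I expect no serious obstacle. One should take care that the argument identifying $\beta$ does not secretly require $v+w$ to be nonzero — it does not, since the self-application of $E$ to $E(u_v)=\beta u_v$ is all that is needed — but it is reassuring that the infinitude of $F_2^\infty$ makes the alternative product computation available as a cross-check.
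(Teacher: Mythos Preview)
Your proposal is correct and follows essentially the same approach as the paper: both use Lemma~\ref{lem:fpc(v)} together with Lemma~\ref{lemma: fpc application} to get $E(u_v)\in\operatorname{span}\{1,u_v\}$, eliminate the constant term by trace preservation, force the remaining scalar into $\{0,1\}$ by idempotency, and invoke transitivity of the $GL(\infty,F_2)$-action on $F_2^\infty\setminus\{0^\infty\}$. The only cosmetic difference is the order---you apply transitivity first to make the coefficients uniform and then determine them, whereas the paper determines $\lambda_v\in\{0,1\}$ pointwise and then uses transitivity to propagate $\lambda_v=1$ everywhere.
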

\begin{proof}
	It suffices to note that for a fixed $v \in F_2^\infty \setminus \{0^\infty\}$ we have, by Lemma \ref{lem:fpc(v)} and standard trace computations, $E(u_v)=\lambda_v u_v$, where $\lambda_v \in \{0,1\}$. But then if some $\lambda_v$ equals 1, we have $u_v \in \mathcal M$, and as the action of $GL(\infty, F_2)$ on $F_2^\infty \setminus \{0^\infty\}$ is transitive, we have $L(F_2^\infty) \subseteq \mathcal M$.    
\end{proof}

The next result describes another aspect of the same phenomenon. The proof is similar to that of Proposition \ref{prop: M intersect L(widetilde C(X))}.
\begin{proposition}\label{prop: M and L(Z2infty)}
	Either $\mathcal M\supseteq L(F_2^\infty)$ or $\mathcal M\cap L(F_2^\infty)=\mathbb C1$.
\end{proposition}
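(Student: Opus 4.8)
The plan is to mimic the proof of Proposition \ref{prop: M intersect L(widetilde C(X))}, replacing the combinatorial \say{spreading} of clopen subsets of $X$ by the linear-algebraic spreading of nonzero vectors provided by Lemma \ref{lemma: disjoint subsets}. Assume $\mathcal M\cap L(F_2^\infty)\neq\mathbb C1$ and pick a non-scalar element $m\in\mathcal M\cap L(F_2^\infty)$; writing its Fourier expansion $m=\sum_{v\in F_2^\infty}m_v u_v$ and subtracting $m_{0^\infty}1$ we may assume $m_{0^\infty}=0$, and after normalising there is a nonzero vector $v_0$ with $m_{v_0}=1$. If $m=u_{v_0}$ we are done immediately by Lemma \ref{cor: E(v)=0 or v} (or rather by its proof, since $u_{v_0}\in\mathcal M$ forces $L(F_2^\infty)\subseteq\mathcal M$ by transitivity of the $GL(\infty,F_2)$-action on nonzero vectors). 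Otherwise pick $w_0\neq v_0$ in $\text{supp}(m)$ realising the maximum of $|m_w|$ over $w\neq v_0$.

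Next I would use Lemma \ref{lemma: disjoint subsets} to produce a sequence $h_n\in GL(\infty,F_2)$ with $h_n(v_0)=v_0$ for all $n$ and $h_n(w_0)$ pairwise distinct; conjugating $m$ by $u_{h_n}$ keeps us inside $\mathcal M\cap L(F_2^\infty)$ and has the effect $(u_{h_n}mu_{h_n}^*)_v=m_{h_n^{-1}(v)}$. Passing to a weak limit $m'=\text{w-}\lim u_{h_n}mu_{h_n}^*\in\mathcal M$, square-summability of the Fourier coefficients forces $m'_{w_0}=0$ while $m'_{v_0}=1$. As in the cited proposition, a diagonal argument (enumerate $F_2^\infty=\{v^{(1)},v^{(2)},\dots\}$ and, by repeated passage to infinite subsequences, arrange that for each $k$ the elements $h_n^{-1}(v^{(k)})$ are eventually constant or eventually pairwise distinct) lets us arrange that for every $v$ either $m'_v$ equals $m_{h_n^{-1}(v)}$ for large $n$ or $m'_v=0$. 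Iterating the passage $m\mapsto m'$ then yields elements $m^{(j)}\in\mathcal M\cap L(F_2^\infty)$ with $m^{(j)}_{v_0}=1$ and $m^{(j)}_v=0$ for all large $j$ whenever $v\neq v_0$, so $m^{(j)}\to u_{v_0}$ weakly and $u_{v_0}\in\mathcal M$. Transitivity of the $GL(\infty,F_2)$-action on $F_2^\infty\setminus\{0^\infty\}$ and invariance of $\mathcal M$ under conjugation then give $u_v\in\mathcal M$ for all $v$, hence $L(F_2^\infty)\subseteq\mathcal M$.

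The main obstacle, exactly as in Proposition \ref{prop: M intersect L(widetilde C(X))}, is the bookkeeping in the diagonal subsequence argument: one must take the weak limit \emph{before} having controlled the behaviour of $h_n^{-1}(\cdot)$ on all of $F_2^\infty$, and then refine the sequence so that the limit's Fourier coefficients are genuinely described by the dichotomy \say{eventually constant or eventually spread out}. This requires being careful that the iteration $m\mapsto m'$ strictly decreases (in an appropriate sense, e.g.\ by killing the chosen maximal off-diagonal coefficient at each stage) the relevant tail of coefficients, so that finitely supported approximants to $u_{v_0}$ are produced in the limit; since the coefficient structure of $L(F_2^\infty)\cong L^\infty(\widehat F_2^\infty)$ is abelian and the orbits $\{h_n(w):n\in\N\}$ can be made infinite for any $w\notin\{0^\infty,v_0\}$ by Lemma \ref{lemma: disjoint subsets}, this goes through verbatim as in the earlier proof. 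The remaining assertions are routine: the two alternatives are mutually exclusive because $L(F_2^\infty)\supseteq\mathbb C1$ always, and in the first case $\mathcal M\supseteq L(F_2^\infty)$ by the above.
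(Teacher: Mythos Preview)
Your proposal is correct and follows essentially the same approach as the paper's own proof: both explicitly mimic the argument of Proposition \ref{prop: M intersect L(widetilde C(X))}, using Lemma \ref{lemma: disjoint subsets} in place of Lemma \ref{lemma: tilde f_A separation} to spread out the unwanted Fourier coefficients, then the same diagonal subsequence extraction and iterated weak-limit procedure to isolate $u_{v_0}$, followed by the transitivity of $GL(\infty,F_2)$ on nonzero vectors. One small bookkeeping point to tidy up: since $(u_{h_n}mu_{h_n}^*)_v=m_{h_n^{-1}(v)}$, to force $m'_{w_0}=0$ you need the vectors $h_n^{-1}(w_0)$ (rather than $h_n(w_0)$) to be pairwise distinct, which Lemma \ref{lemma: disjoint subsets} of course provides just as easily.
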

\begin{proof}
The arguments are similar to the proof of Proposition \ref{prop: M intersect L(widetilde C(X))}. Let $m\in\mathcal M\cap  L(F_2^\infty)$, $m\notin\mathbb C1$. Without loss of generality, $0^\infty\notin\suppe(m)$. Let $v\in\suppe(m), v\in F_2^\infty$.  Recall the Fourier expansion formula: 
$$m\delta_e=\sum\limits_{w\in F_2^\infty} m_w \delta_w,\;m_w\in\mathbb C.$$ 
We shall normalize $m$ so that $m_v=1$. 
	
Starting with $p=m$, we will repeatedly perform the following operation. Assume that $p_w\neq 0$ for some $w\neq v$. Say we choose $w\neq v$ with the largest value of $|p_w|$. Using Lemma \ref{lemma: disjoint subsets}
construct a sequence of elements $g_n\in GL(\infty,F_2)$ such that $g_n(v)=v$ but $g_n(w)$ are pairwise distinct for $n\in\mathbb N$. Let $p'$ be any weak limit point of $u_{g_n}pu_{g_n}^*$. Replacing $g_n$ with a subsequence we may assume that $u_{g_n}pu_{g_n}^*$ converges weakly to $p'$. For any $x\in F_2^\infty$ one has:
$$ p_{g_n(x)} = \langle \delta_{g_n(x)},p\delta_e\rangle = \tau(u_{g_n^{-1}} u_{x}^* u_{g_n}p) = \langle \delta_x,u_{g_n}pu_{g_n^{-1}}\delta_e \rangle\longrightarrow p'_x.$$ 
	
Next, let us order all vectors $x\in F_2^\infty$ in an arbitrary way: $$F_2^\infty=\{x_1,x_2,x_3,\ldots\}.$$ Similarly to Proposition \ref{prop: M intersect L(widetilde C(X))}, construct an increasing sequence $(n_k)_{k \in \N}$ of positive integers and a decreasing sequence of infinite subsets $S_k\subseteq\mathbb N$ such that the following holds for each $k\in \N$ 
	\begin{itemize}
		\item[$1)$] $n_k\in S_{k-1}$;
		\item[$2)$] $\min S_k>n_k$;
		\item[$3)$] either $g_j(x_k)$ does not depend on $j$ for $j\in S_k$ or $g_j(x_k)$ are pairwise distinct for $j\in S_k$.
	\end{itemize} 
Now, replace $g_n$ with the  subsequence $(g_{n_k})_{k \in \N}$. Then for any vector $x\in F_2^\infty$ the following is true: either  $g_n(x)$ becomes constant for large $n\in \N$ or  the vectors $\{g_n(x):n\geqslant N\}$ are pairwise distinct for sufficiently large $N\in \N$. Recall that $p'$ is the weak limit of $g_npg_n^*$. In particular, $p'_x=\lim_{n \to \infty} p_{g_n(x)}$ for any $x\in F_2^\infty$. We obtain in the first case above
that $p'_x=p_y$ for some $y$ which is eventually left fixed by $g_n$, and in the second case that $p'_x=0$. In particular, $p'_v=p_v,\;p'_w=0$.
	
By performing the operation $p\to p'$ repeatedly starting with $p=m$ we obtain a sequence of elements $m^{(n)}\in \mathcal M\cap L(F_2^\infty)$ such that $m^{(n)}_v=m_v=1$ for all $n$ and for any $w\neq v$ one has $m^{(n)}_w=0$ for all sufficiently large $n$. It follows that $m^{(n)}$ converges weakly to $v$. Therefore, $v\in \mathcal M$. By $GL(\infty,F_2)$-invariance, $\mathcal M\ni w$ for any $w\in F_2^\infty$, which finishes the proof.
\end{proof}

For the rest of the proof set
\begin{equation}\label{eq: s = 01 10}
	s=(12)=\begin{bmatrix}0&1\\1&0\end{bmatrix}\in GL(2, F_2)\subseteq GL(\infty, F_2)\subseteq G.
\end{equation}
Observe that $\fpc(s)\subseteq GL(2, F_2)\ltimes F_2^2$. Therefore,
\begin{equation}\label{eq:E(01_10)}E(u_s)=\sum\limits_{g\in GL(2, F_2)}u_g\cdot A_g,\;\;A_g\in L( F_2^2)=L^\infty(\widehat{ F}_2^2).\end{equation} 
Note that $A_e=0$. Indeed, consider arbitrary $v\in F_2^{\infty}$, $v \neq 0^\infty$. Then we have 
\[\tau(A_eu_v) = \tau(E(u_s) u_v) = \tau(u_sE(u_v)) =0,\]
as  by Lemma \ref{lem:fpc(v)}  $E(v)=0$ or $E(v)=v$. On the other hand $\tau(A_e)=\tau(E(u_s))=0$, so we indeed see that $A_e =0$.

The next proposition forms the key technical part of the proof of the main theorem of this section.
\begin{proposition}\label{lemma: E(01 10) for cross product}
We have  $E(s)=s\cdot A$, where $A\in  L^\infty(\widehat{ F}_2^2)$ belongs to the set $\{0,1,[0,0]+[1,1]\}$.
\end{proposition}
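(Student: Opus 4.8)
We begin from the expansion \eqref{eq:E(01_10)}, $E(s)=\sum_{g\in GL(2,F_2)}u_g\cdot A_g$ with $A_g\in L^\infty(\widehat F_2^2)$ and $A_e=0$, and our goal is to show that all but one $A_g$ vanish. The strategy is to exploit the relation $s^{-1}E(s)\in\mathcal M'$ (Proposition \ref{prop: E properties}(3)) together with conjugation by cleverly chosen elements of $G$ which commute with $s$, mirroring the argument of Proposition \ref{prop:(GLinfty,F2)_has_ISR} but now keeping track of the extra $L^\infty(\widehat F_2^2)$-coefficients. First, I would observe that $E(s)$ is $s$-conjugation invariant, i.e.\ $u_sE(s)u_s^*=E(s)$ (since $s$ normalises its own centralizer data), which already imposes symmetry constraints on the family $(A_g)_g$; combined with the fact that $E$ is $^*$-preserving and $E(s)^*=E(s)$ (as $s=s^*$), we get $A_g^*$-relations linking $A_g$ and $A_{g^{-1}}$ (conjugated appropriately). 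The key move, as in Proposition \ref{prop:(GLinfty,F2)_has_ISR}, is to introduce the $3$-cycle $\omega$ acting on $e_1,e_2,e_3$: then $\omega s\omega^{-1}$ is another transposition inside $GL(3,F_2)$, $E(\omega s\omega^{-1})=u_\omega E(s)u_\omega^*$, and since $s^{-1}E(s)\in\mathcal M'$ it must commute with $E(\omega s\omega^{-1})$. Expanding the product $s^{-1}E(s)\cdot u_\omega E(s)u_\omega^*$ and comparing with $u_\omega E(s)u_\omega^*\cdot s^{-1}E(s)$, and using the non-cancellation coming from $GL(2,F_2)\cap \omega GL(2,F_2)\omega^{-1}=\{I\}$ together with the entrywise bookkeeping of which matrix products can produce a given $C$ (the $C_{31}=0$ vs $D_{13}=0$ argument), I would force $A_g=0$ for every $g\in GL(2,F_2)$ other than $g=s$; here one has to be slightly careful, since now each term carries an $L^\infty(\widehat F_2^2)$-factor, and one must use Lemma \ref{lemma: g[w]/g} to see how $u_\omega$ transports these factors (the $i$-th row of $\omega^{-1}$ having a single nonzero entry is automatic as $\omega\in S_\infty$), so that the support-in-$GL(\infty,F_2)$ comparison is genuinely unaffected by the $A_g$'s.

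Once we know $E(s)=u_s\cdot A$ with $A=A_s\in L^\infty(\widehat F_2^2)$, it remains to pin down $A$. I would apply $E$ again: $E(s)=E(E(s))=E(u_s A)$. Now write $A=\sum_{v\in F_2^2}c_v u_v$ in its Fourier expansion; then $u_sA=\sum_v c_v u_{s\cdot v}=\sum_v c_v u_s u_v$ with $s(v)$ the permuted vector. Using $E(s u_v) = E(s u_v s^{-1}) \cdot$ (something) more carefully: since $u_sA u_s^* = u_s A u_s^*\in L^\infty(\widehat F_2^2)$ is again supported on $F_2^2$, and $s u_v = (s u_v s^{-1}) u_s$ with $s u_v s^{-1}=u_{s(v)}$, we get $E(u_s A)=\sum_v c_v u_{s(v)} E(u_s)=\left(\sum_v c_v u_{s(v)}\right)u_s A = (u_s A u_s^*) u_s A = u_s (A u_s^* A u_s) $ — i.e.\ writing $B = u_s^* A u_s\in L^\infty(\widehat F_2^2)$ (which is $A$ with the two coordinates swapped), we obtain $u_s A = u_s B A$, hence $A = BA$, i.e.\ $A$ is invariant under left multiplication by $B=\sigma(A)$ where $\sigma$ swaps the two coordinates of $\widehat F_2^2$. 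Combined with self-adjointness of $E(s)$ and the fact that $E(s)E(s)^* = E(s)^2 = u_s B A u_s^{-1}\cdot\ldots$ lies in $L^\infty(\widehat F_2^2)$, and with the idempotency forced by applying $E$, I would conclude that $A$ is a self-adjoint projection in $L^\infty(\widehat F_2^2)$ invariant under the coordinate swap $\sigma$. The $\sigma$-invariant projections in $L^\infty(\{0,1\}^2)=\mathbb C^4$ are exactly $0$, $1$, $[0,0]+[1,1]$, and $[0,1]+[1,0]$; the last is excluded because it is orthogonal to $[0,0]$ hence would make $\tau(s^{-1}E(s))=\tau(A)=\tfrac12$ but also force $A\cdot[0,0]=0$, contradicting… more precisely, one rules it out using that $E(s)$ must be consistent with the already-known behaviour of $E$ on $L(F_2^\infty)$ (Lemma \ref{cor: E(v)=0 or v} / Proposition \ref{prop: M and L(Z2infty)}): if $A=[0,1]+[1,0]$ then $A=\tfrac12(1-u_{e_1+e_2})$, so $u_{e_1+e_2}\in\mathcal M$ forces $L(F_2^\infty)\subseteq\mathcal M$, and then $E(u_v)=u_v$ for all $v$, whence $E(s u_v)=E(s)u_v$; taking $v=e_1$ and using $s u_{e_1} s^{-1}=u_{e_2}$ yields a relation contradicting $A=[0,1]+[1,0]$. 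This leaves $A\in\{0,1,[0,0]+[1,1]\}$, which is the claim.

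The main obstacle I expect is the first step: carrying the coefficient-matching argument of Proposition \ref{prop:(GLinfty,F2)_has_ISR} through in the presence of the $L^\infty(\widehat F_2^2)$-valued coefficients $A_g$. One must be sure that the "no cancellation" phenomenon for the $GL(\infty,F_2)$-supports survives when each $u_g$ is decorated by an $A_g$, which requires knowing that conjugation by $\omega$ (and by $s$) interacts predictably with these decorations — this is exactly what Lemma \ref{lemma: g[w]/g} is for, but one has to check its hypotheses apply and track the $\star$-bookkeeping carefully. The second step, identifying $A$ as a $\sigma$-invariant projection and eliminating $[0,1]+[1,0]$, is more routine: it is a finite-dimensional computation in $\mathbb C^4$ once the projection and invariance properties are in hand, together with one appeal to Proposition \ref{prop: M and L(Z2infty)} to discard the remaining case.
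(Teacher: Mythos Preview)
Your first step matches the paper's in outline, but you underestimate the work needed once the $L^\infty(\widehat F_2^2)$-coefficients are present. The no-cancellation argument now only tells you that the $L(F_2^\infty)$-coefficient of a specific $u_C$ (with $C_{13}=1$) vanishes; concretely, for $g=\begin{bmatrix}1&0\\1&1\end{bmatrix}$ one gets $A_g\cdot h^{-1}A_gh=0$ in $L^\infty(\widehat F_2^3)$ for a certain $h\in GL(3,F_2)$, and a separate explicit expansion in the basis $[i,j,k]$ is required to conclude $A_g=0$. The paper carries this out for $g$ and then for $t=\begin{bmatrix}0&1\\1&1\end{bmatrix}$ (using a further conjugated commutation), and $s$-invariance finishes the rest.

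Your second step has genuine gaps. First, the idempotency step $E(u_sA)=\sigma(A)\,E(u_s)$ uses the module property $E(mx)=m\,E(x)$, which needs $\sigma(A)\in\mathcal M$; this is not yet known unless $L(F_2^\infty)\subseteq\mathcal M$. More seriously, your enumeration of $\sigma$-invariant projections in $L^\infty(\{0,1\}^2)$ is wrong: $\sigma$ swaps the two coordinates, hence fixes $[0,0]$ and $[1,1]$ individually, so there are eight $\sigma$-invariant projections, not four. In particular $A=[0,0]$ and $A=[1,1]$ survive your argument and must be eliminated. Finally, your proposed elimination of $[0,1]+[1,0]$ does not yield a contradiction: from $E(s\,u_{e_1})=E(s)\,u_{e_1}$ and $s\,u_{e_1}=u_{e_2}\,s$ one only recovers $A\,u_{e_1}=u_{e_1}\,A$, which is automatic. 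The paper's route is different: it exploits that $A=s^{-1}E(s)\in\mathcal M'$ commutes with $\omega sA\omega^{-1}$, and an explicit computation with Lemma \ref{lemma: g[w]/g} yields the algebraic constraint $c_{ik}c_{kl}=c_{kl}c_{il}$ on the coefficients $A=\sum c_{ij}[i,j]$. This immediately kills $[0,1]+[1,0]$ (take $(k,l,i)=(0,1,0)$) and collapses the analysis to four cases; the diagonal cases $A\in\mathbb C[0,0]$ and $A\in\mathbb C[1,1]$ still need a separate contradiction, obtained by passing to $E(g)$ for $g=I+e_{21}$ and testing commutation with $g_k=I+e_{k1}$ for $k\ge 3$.
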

\begin{proof}
The ideas in the first part of the proof, where we will establish that $E(s) = s \cdot A$ for some $A \in L^\infty(\widehat{ F}_2^2)$  are similar to these used in the proof of Proposition \ref{prop:(GLinfty,F2)_has_ISR}. 

Consider the expansion in the formula \eqref{eq:E(01_10)}. As in Proposition \ref{prop:(GLinfty,F2)_has_ISR}, let $\omega=(123)\in S_\infty\subseteq GL(\infty, F_2)$. Notice that $s^{-1} E(s)=sE(s)$ commutes with $E(\omega s\omega^{-1})=\omega E(s)\omega^{-1}=\omega sE(s)s^{-1}\omega^{-1}$. Consider the product
$$sE(s)\cdot \omega sE(s)s^{-1}\omega^{-1}=\sum\limits_{g\in GL(2, F_2)}sgA_g\cdot \sum\limits_{h\in GL(2, F_2)} \omega shA_hs^{-1}\omega^{-1}.$$
\vskip 0.1cm\noindent
${\bf 1)}$ Consider the element $g=\begin{bmatrix}
	1&0\\1&1
\end{bmatrix}$. Let $A_g=\sum\limits_{i,j\in\{0,1\}}c_{ij}\cdot [i,j]$, where $c_{ij}\in\mathbb C$. We have used the spanning property of $[i,j]$, i.e., $\{[w]: w\in \{0,1\}^k\}$ is a spanning set for $L(F_2^k)$. Taking into account that $C:=sg\cdot \omega sgs^{-1}\omega^{-1}$ has $C_{13}=1$, as  $\omega sgs^{-1}\omega^{-1} = \begin{bmatrix}
	1&0&0\\0&1&1\\0&0&1
\end{bmatrix}$ and so $C$ cannot appear with a nonzero coefficient in the expansion of $\omega sE(s)s^{-1}\omega^{-1}\cdot sE(s)$ we obtain:
$$sgA_g\cdot \omega sgA_gs^{-1}\omega^{-1}=0\;\;\Longrightarrow\;\; A_g\cdot h^{-1} A_gh=0,\;\;\text{where}\;\; h=(\omega sg)^{-1}=\begin{bmatrix}
	0&0&1\\0&1&1\\1&0&0
\end{bmatrix}.$$ 
Further, substituting the formula for $A_g$ we obtain:
\begin{align*}0&=A_g\cdot h^{-1} A_gh=\sum\limits_{i,j\in\{0,1\}}c_{ij}[i,j,\star]\cdot\sum\limits_{i,j\in\{0,1\}}c_{ij}[(i,j,\star)\cdot h]\\&=
	(c_{00}[0,0,\star]+c_{01}[0,1,\star]+c_{10}[1,0,\star]+c_{11}[1,1,\star])(c_{00}[\star,0,0]+c_{01}[\star,1,1]\\& \;\;\;\;\;+c_{10}[\star,0,1]+c_{11}[\star,1,0])\\&=
	c_{00}^2[0,0,0]+c_{01}^2[0,1,1]+c_{10}^2[1,0,1]+c_{11}^2[1,1,0]+
    \\& \;\;\;\;\;+c_{00}c_{10}[0,0,1]+c_{01}c_{11}[0,1,0]+c_{10}c_{00}[1,0,0]+c_{11}c_{01}[1,1,1].\end{align*}
   We conclude that $c_{ij}=0$ for all  $i,j\in\{0,1\}$, and so $A_g=0$.
\vskip 0.1cm\noindent
${\bf 2)}$  Consider now the element $t=\begin{bmatrix}
	0&1\\1&1
\end{bmatrix}$. Let $A_t=\sum\limits_{i,j\in\{0,1\}}c_{ij}\cdot [i,j]$, where $c_{ij}\in\mathbb C$. Observe that $sE(s)$ commutes with $\omega gE(s)g\omega^{-1}=E(\omega gsg^{-1}\omega^{-1})$, where $g=g^{-1}$ is the element from part $1)$. Similarly to case $1)$ we notice that for $C:=st\cdot \omega gtg^{-1}\omega^{-1}$ one has $C_{13}=1$ and so $C$ cannot appear with nonzero coefficient in the expansion of $\omega gE(s)g\omega^{-1}\cdot sE(s)$. We obtain:
$$stA_t\cdot \omega gtA_tg\omega^{-1}=0\;\;\Longrightarrow\;\; A_t\cdot h^{-1} A_th=0,\;\;\text{where}\;\; h=(\omega gt)^{-1}=\begin{bmatrix}
	0&0&1\\0&1&0\\1&0&0
\end{bmatrix}.$$ 
Similarly to case $1)$, substituting the formula for $A_h$ we obtain:
\begin{align*}0&=A_t\cdot h^{-1} A_th=\sum\limits_{i,j\in\{0,1\}}c_{ij}[i,j,\star]\cdot\sum\limits_{i,j\in\{0,1\}}c_{ij}[(i,j,\star)\cdot h]\\&=
	(c_{00}[0,0,\star]+c_{01}[0,1,\star]+c_{10}[1,0,\star]+c_{11}[1,1,\star])(c_{00}[\star,0,0]+c_{01}[\star,1,0]\\& \;\;\;\;\;+c_{10}[\star,0,1]+c_{11}[\star,1,1])\\&=
	c_{00}^2[0,0,0]+c_{01}^2[0,1,0]+c_{10}^2[1,0,1]+c_{11}^2[1,1,1]
    \\& \;\;\;\;\;+c_{00}c_{10}[0,0,1]+c_{01}c_{11}[0,1,1]+c_{10}c_{00}[1,0,0]+c_{11}c_{01}[1,1,0].\end{align*}
    We conclude that $c_{ij}=0$ for all  $i,j\in\{0,1\}$, and so $A_t=0$.

\smallskip
 Taking into account that $E(s)=sE(s)s^{-1}$, we obtain that $A_g=0$ for all $g\neq s$, which shows the statement claimed in the first line of the proof; we shall simply write $A$ for $A_s$ from now on.
\smallskip 
 
Notice that by the invariance of $E$ the element $s$ commutes with $A$, which means that $c_{01}=c_{10}$.
Further, by Proposition \ref{prop: E properties} (3), the element $A$ lies in $\mathcal M'$. Therefore, $A$ commutes with $\omega E(s)\omega^{-1}=\omega sA\omega^{-1}$. Let $A=\sum_{i,j\in\{0,1\}}c_{ij}[i,j]$. Then we have:
 \begin{align*}
 	A&\omega s A\omega^{-1}=\omega s A\omega^{-1}A\;\;\Longleftrightarrow\;\;\omega^{-1}A\omega\cdot A=A\cdot s\omega^{-1}A\omega s^{-1}\;\;&\Longleftrightarrow\\
 	\sum  &  c_{ij}([i,j]\omega)\cdot A=A\cdot\sum c_{ij}([i,j]\omega s^{-1})\;\;&\Longleftrightarrow\\ 
 	\sum &   c_{ij}[j,\star,i]\cdot \sum c_{kl}[k,l,\star]   =\sum c_{kl}[k,l,\star]\cdot \sum c_{ij}[\star,j,i]      \;\;&
 	\Longleftrightarrow \\ \sum & c_{ik}c_{kl}[k,l,i] =\sum c_{kl}c_{il}[k,l,i]   \;\;\Longleftrightarrow\;\; c_{ik}c_{kl}=c_{kl}c_{il}\;\;&\forall\;\;k,l,i\in\{0,1\}^3.\;\;\;\;\;\;\;\;\;\;\;\;\;\;
 \end{align*}
We will now analyse several cases.

\smallskip 
\noindent $a)$ If $c_{01}\neq 0$ (equivalently, $c_{10}\neq 0$), then taking $(k,l,i)=(0,1,0)$ and $(k,l,i)=(0,1,1)$ we obtain respectively that $c_{00}=c_{01}$ and $c_{10}=c_{11}$. Since all $c_{ij}$ are equal, one has $A=c_{00}\id$. Thus, $c_{00}s=E(s)=E(E(s))=c_{00}^2s$. It follows that $c_{00}\in \{0,1\}$ and $E(s)\in \{0,s\}$.

\smallskip\noindent
 $b)$ Suppose that $c_{01}=c_{10}=0$ but $c_{00}\neq 0$ and $c_{11}\neq 0$, so that $A \neq 0$. Since $E(s)(E(s))^*=(E(s))^2=A^2\in \mathcal M$, from Corollary \ref{cor: E(v)=0 or v} we obtain that $\mathcal M\supseteq L(F_2^\infty)$. Therefore, $s\cdot ([0,0]+[1,1])\in \mathcal M$. Now we may write $E(s)=sA=s(c_{00}[00]+c_{11}[11])\in \mathcal{M}$. Then $\mathcal{M}\ni E(s)\cdot[\star1]=s(c_{00}[01])$ since $[\star1]\in\mathcal{M}$. Similarly, $\mathcal{M}\ni E(s)\cdot [\star0]=sc_{11}[00]$. Then using $\langle s-sA, [01]\rangle=0=\langle s-sA,[00]\rangle$, we can deduce that $c_{00}=c_{11}=1$.

 \smallskip \noindent $c)$ Assume now that $E(s)=c_{00}s\cdot [0,0]$, where $c_{00}\neq 0$. Conjugating by the element $t=\begin{bmatrix}
 	0&1\\1&1
 \end{bmatrix}$ we obtain that $E(g)=c_{00}g\cdot [0,0]$, where $g=\begin{bmatrix}
 	1&0\\1&1
 \end{bmatrix}$. Let $g_k=I+e_{k1}$ for $k\geqslant 2$, so that $g_2=g$. Then $g_k$ commutes with $g$, and so with $E(g)$ as well. However, for $k\geqslant 3$, $$g_k\cdot [0,0]\cdot g_k= g_k\cdot ([0,0,\star^{k-3},0]+[0,0,\star^{k-3},1])\cdot g_k=[0,0,\star^{k-3},0]+[1,0,\star^{k-3},1]\neq [0,0].$$
  We obtain a contradiction to the formula for $E(g)$, which shows that this case is impossible.

\smallskip \noindent $d)$ Finally, the case $E(s)=c_{11}s\cdot [1,1]$ can be treated similarly to the case $c)$.

\end{proof}

We shall now examine the possibilities listed in the proposition above and see that the value of $E(s)$ in fact (almost) determines the invariant subalgebra $\mathcal M$.

\begin{lemma}\label{prop: (01 10)E((01 10))=1}
 If $s E(s)=1$ then $\mathcal M=L(G)$.
\end{lemma}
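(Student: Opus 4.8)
The plan is to show that the hypothesis $sE(s)=1$ already forces $s\in\mathcal M$, and then to deduce $\mathcal M=L(G)$ from the description of the normal subgroups of $G$ obtained in Lemma~\ref{lemma: normal subgroups of H}.

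First I would record that, since $s=(12)$ is an involution, the unitary $s\in L(G)$ satisfies $s^*=u_{s^{-1}}=s$; hence the relation $sE(s)=1$ rewrites as $E(s)=s^*=s$, which means precisely that $s\in\mathcal M$ (as $E$ is the $\tau$-preserving conditional expectation of $L(G)$ onto $\mathcal M$). Now $\mathcal M$ is $G$-invariant, so it contains $gsg^{-1}=u_{gsg^{-1}}$ for every $g\in G$, and since it is a $*$-algebra it contains the linear span of $\{u_h:h\in N\}$, where $N\trianglelefteq G$ is the normal subgroup generated by $s$. As $s\in GL(\infty,F_2)\setminus\{e\}$, we have $s\neq e$ and $s\notin F_2^\infty$, so $N$ is neither trivial nor equal to $F_2^\infty$; by Lemma~\ref{lemma: normal subgroups of H} the only remaining possibility is $N=G$. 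Hence $u_g\in\mathcal M$ for all $g\in G$, and taking the weak closure yields $\mathcal M=L(G)$.

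I do not anticipate a real difficulty here: the structural work has already been carried out in Proposition~\ref{lemma: E(01 10) for cross product} and Lemma~\ref{lemma: normal subgroups of H}, and the only thing to check carefully is that $s$ indeed lies outside the unique non-trivial proper normal subgroup $F_2^\infty$, which is exactly what makes its normal closure all of $G$. If one wishes to avoid invoking the normal-subgroup classification, the conclusion can be reached by hand instead: conjugating $s\in\mathcal M$ by $e_1\in F_2^\infty$ gives $s\,u_{e_1+e_2}\in\mathcal M$, so $u_{e_1+e_2}=s\cdot(s\,u_{e_1+e_2})\in\mathcal M$, whence $L(F_2^\infty)\subseteq\mathcal M$ by transitivity of the $GL(\infty,F_2)$-action on $F_2^\infty\setminus\{0^\infty\}$ (compare the proof of Lemma~\ref{cor: E(v)=0 or v}); meanwhile simplicity of $GL(\infty,F_2)$ combined with $s\in\mathcal M$ gives $L(GL(\infty,F_2))\subseteq\mathcal M$, and since $G=GL(\infty,F_2)\ltimes F_2^\infty$ these two subalgebras generate $L(G)$, so again $\mathcal M=L(G)$.
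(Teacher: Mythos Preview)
Your proof is correct and follows essentially the same approach as the paper: both deduce $E(s)=s$ from $sE(s)=1$ and then invoke Lemma~\ref{lemma: normal subgroups of H} to conclude that the relevant normal subgroup must be all of $G$. The paper phrases this via the normal subgroup $N=\{g\in G:E(g)=g\}$ rather than the normal closure of $s$, but the arguments are equivalent.
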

\begin{proof}
	Recall that $N=\{g\in GL(\infty,F_2)\ltimes F_2^\infty:E(g)=g\}$ is a normal subgroup in $GL(\infty,F_2)\ltimes F_2^\infty$. Since $E(s)=s$, by the description of normal subgroups of $GL(\infty,F_2)\ltimes F_2^\infty$ (see Lemma \ref{lemma: normal subgroups of H}) we obtain that $N=GL(\infty,F_2)\ltimes F_2^\infty$, which finishes the proof.
\end{proof}

\begin{lemma}\label{prop: (01 10)E((01 10))=0}
 If $sE(s)=0$ then either $\mathcal M=\mathbb C 1$ or $\mathcal M=L(F_2^\infty)$.
\end{lemma}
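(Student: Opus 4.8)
The statement to prove is: if $sE(s)=0$ (equivalently $E(s)=0$), then $\mathcal{M}=\mathbb{C}1$ or $\mathcal{M}=L(F_2^\infty)$. The strategy parallels case $\mathbf{2)}$ of the proof of Theorem~\ref{th: S(2^infty) and ISR}: combine the character description with the $\fpc$-estimates to pin down $E$ on all of $G$, then invoke Proposition~\ref{prop: M and L(Z2infty)} (together with Lemma~\ref{cor: E(v)=0 or v}) to conclude.

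\medskip

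First I would consider the character $\chi(g)=\tau(g^{-1}E(g))$ on $G=GL(\infty,F_2)\ltimes F_2^\infty$, which by Proposition~\ref{prop: E properties}(4) is a $G$-invariant positive-definite normalized function, hence decomposes as an integral/sum of the indecomposable characters listed in Theorem~\ref{theorem: characters Gl times F2infty}, namely the $\chi_{k,d}$ for $(k,d)\in\Z_+^\infty\times\{0,1\}$. The key input is that $E(s)=0$ forces $\chi(s)=0$. Now for $k\in\Z_+$ (finite) and $d\in\{0,1\}$ we have $\chi_{k,d}(s)=2^{-k\,\mathrm{rank}(s-I)}>0$ (note $s\in GL(2,F_2)$ lies in the "$v=0\in R(g-I)$" branch, so even the $d=0$ character is nonzero at $s$); only the regular character $\chi_{\infty,\cdot}$ vanishes at $s$. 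Since $\chi$ is a convex combination of indecomposables and $\chi(s)=0$ while each non-regular $\chi_{k,d}(s)>0$, all non-regular components have weight zero, so $\chi$ is the regular character: $\chi(g)=\delta_{g,e}$ for all $g\in G$. By Proposition~\ref{prop: E properties}(5), this gives $\chi(g)=0\Rightarrow E(g)=0$ immediately for every $g\neq e$ at which $\chi$ vanishes — which is all $g\neq e$.

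\medskip

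Wait — one must be careful: the indecomposable characters of $G$ need not all be of the form $\chi_{k,d}$ unless $G$ is of "finite type" in the appropriate sense; strictly speaking Theorem~\ref{theorem: characters Gl times F2infty} lists them, so the decomposition theory (as in Takesaki, Ch.~IV, Thm.~8.21, cited earlier) applies and $\chi$ is indeed a convex combination of the $\chi_{k,d}$ plus possibly the regular character, which is itself $\chi_{\infty,d}$. Hence the argument above is valid, and we conclude $\chi\equiv\delta_{g,e}$, i.e.\ $E(g)=0$ for all $g\in G\setminus\{e\}$ and in particular $E(u_v)=0$ for all $v\in F_2^\infty\setminus\{0^\infty\}$ — consistent with Lemma~\ref{cor: E(v)=0 or v}. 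In fact $E$ then kills every group element, but $\mathcal{M}$ need not be trivial, since $\mathcal{M}$ could still be $L(F_2^\infty)$ — the point is that $E$ being the trace on all $u_g$ means $\mathcal{M}\subseteq$ the fixed-point-type structure; more precisely, what we have shown is that $\tau$ restricted to $\mathcal{M}$ behaves so that the only elements of $G$ landing back in $\mathcal{M}$ are... Actually the clean way: since $E(u_g)=0$ for every $g\neq e$ means $u_g\notin\mathcal{M}$ for any $g\neq e$; but that alone does not force $\mathcal{M}=\mathbb{C}1$.

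\medskip

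So the honest final step: having shown $\chi\equiv\delta_{g,e}$, I would argue exactly as in Theorem~\ref{th: S(2^infty) and ISR}, case $\mathbf{2)}$. We already know $E(u_v)=0$ for $v\neq 0^\infty$. Now I claim $E$ vanishes on all of $G\setminus F_2^\infty$ as well: indeed that is what $\chi\equiv\delta_{e}$ plus Proposition~\ref{prop: E properties}(5) gives directly for elements $g\cdot v$ with $g\neq e$, since $\chi(g\cdot v)=0$ forces $E(g\cdot v)=0$. (No separate $\fpc$-spreading argument is even needed here because the character already vanishes everywhere off $e$.) Consequently $E(x)\in L(F_2^\infty)$ for all $x$ — more precisely $E$ annihilates every $u_{g\cdot v}$ with $g\neq e$, hence $E(L(G))\subseteq L(F_2^\infty)$, so $\mathcal{M}=E(L(G))\subseteq L(F_2^\infty)$, i.e.\ $\mathcal{M}\subseteq L(F_2^\infty)$. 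But then $\mathcal{M}=\mathcal{M}\cap L(F_2^\infty)$, and by Proposition~\ref{prop: M and L(Z2infty)} this is either $\mathbb{C}1$ or all of $L(F_2^\infty)$, completing the proof.

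\medskip

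\textbf{Main obstacle.} The delicate point is justifying that $E(s)=0$ forces the \emph{entire} character $\chi$ to be regular — this rests on: (a) every non-regular indecomposable character $\chi_{k,d}$ takes a strictly positive value at $s$ (which needs the observation that $s\in GL(2,F_2)\subseteq GL(\infty,F_2)$ has $0^\infty\in R(s-I)$, so the $d=0$ characters don't vanish there either), and (b) the validity of the integral decomposition of $\chi$ into the $\chi_{k,d}$, which is legitimate by the cited decomposition theory once Theorem~\ref{theorem: characters Gl times F2infty} identifies all indecomposables. Everything after that is bookkeeping with conditional expectations and an appeal to Proposition~\ref{prop: M and L(Z2infty)}.
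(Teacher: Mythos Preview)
Your overall strategy matches the paper's proof, but there is a genuine slip in the character analysis that creates an internal contradiction you yourself notice without resolving.

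You claim that $\chi(s)=0$ forces $\chi$ to be \emph{the} regular character $\delta_e$ on $G$. This is false: Theorem~\ref{theorem: characters Gl times F2infty} gives \emph{two} characters with parameter $k=\infty$, namely $\chi_{\infty,0}$ (the regular character of $G$) and $\chi_{\infty,1}$, which equals $1$ on all of $F_2^\infty$ and $0$ on $G\setminus F_2^\infty$. Both vanish at $s$, so $\chi(s)=0$ only tells you that $\chi$ is a convex combination of these two. Your conclusion ``$\chi\equiv\delta_e$, hence $E(u_v)=0$ for all $v\neq 0^\infty$'' would force $\mathcal M=\mathbb C 1$ outright, which is exactly the inconsistency you flag (``$E$ then kills every group element, but $\mathcal M$ could still be $L(F_2^\infty)$'') without fixing.

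The correct argument, and what the paper does, is: since both $k=\infty$ characters vanish on every $g\cdot v$ with $g\neq e$, one gets $\chi(g\cdot v)=0$ and hence $E(g\cdot v)=0$ for all such elements. This gives $\mathcal M=E(L(G))=E(L(F_2^\infty))$, and then Corollary~\ref{cor: E(v)=0 or v} (or equivalently Proposition~\ref{prop: M and L(Z2infty)}) yields the dichotomy. Your final paragraph lands on exactly this conclusion, but the route through ``$\chi=\delta_e$'' is wrong; drop that claim and instead observe directly that both surviving characters vanish off $F_2^\infty$.
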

\begin{proof}
	Consider the character $\chi(g)=\tau(gE(g^{-1})), g \in G$ (see Proposition \ref{prop: E properties} (4)). Since there are only countably many indecomposable characters on $G$ (by Theorem \ref{theorem: characters Gl times F2infty}),  from the decomposition theorem we obtain that $\chi$ has the form $\sum\limits_{i\in I}\alpha_i\chi_i$, where $I$ is some at most countable index set, $\chi_i,i\in I$, are pairwise distinct indecomposable characters, $\alpha_i>0,i\in I$, and $\sum\limits_{i\in I}\alpha_i=1$.

As $\chi(s)=0$, it follows that $\chi_i(s)=0$ for all $i\in I$. By Theorem \ref{theorem: characters Gl times F2infty}, this implies that for each $\chi_i, i\in I$, the corresponding parameter $k$ is equal to infinity. 
Examining the two characters with $k=\infty$ we conclude that $\chi(gv)=0$, and so $E(gv)=0$, for all $g\in GL(\infty, F_2)\setminus\{e\},v\in F_2^\infty$. Using Corollary  \ref{cor: E(v)=0 or v}, we obtain:
$$\mathcal M=E(L(G))=E(L(F_2^\infty))\in\{\mathbb C1,L(F_2^\infty)\}\},$$ which finishes the proof.	
\end{proof}

Before treating the last case, we will introduce the relevant exotic invariant subalgebra.

Note that in the rest of the section, we will use the notation $u_g$, $u_v$ (and not simply $g$, $v$) for elements of $L(G)$.
For $g\in GL(\infty, F_2)$ set 
\begin{equation}\label{eq: f_g}
	f_g=\tfrac{1}{\# R(g-I)}\mathbf{1}_{R(g-I)}=\tfrac{1}{\# R(g-I)}\sum\limits_{v\in R(g-I)}u_v\in L(F_2^\infty).
\end{equation} 
Clearly, $f_g$ is an orthogonal projection. Observe that $R(g-I)$ above is finite, thus, $f_g$ belongs in fact to $\mathbb C[F_2^\infty]$; note also that for any $g,h \in GL(\infty,F_2)$ we have 
\begin{equation}\label{eq: f_g commutation}
	f_gf_h=f_hf_g\leqslant f_{gh},\; f_g u_h = u_h f_{h^{-1}gh},
\end{equation} 
as for any $v \in F_2^\infty$ we have $v \in R(g-I)$ if and only if $h^{-1}v \in R(h^{-1}gh - I)$, and $R(gh-I)=R(gh-h+h-I)\subseteq R(g-I)+R(h-I)$. 

\begin{lemma}\label{prop: extra subalgebra of L(H)}
Let $\mathcal M_{exo}$ be the von Neumann subalgebra of $L(G)=L(GL(\infty, F_2)\ltimes F_2^\infty)$ generated by $L( F_2^\infty)$ and elements of the form $u_g\cdot f_g,g\in GL(\infty,F_2)$. Then $\mathcal M_{exo}$ is an exotic invariant subalgebra.	
\end{lemma}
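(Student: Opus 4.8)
The plan is to establish the two defining properties of an exotic invariant subalgebra separately: that $\mathcal M_{exo}$ is $G$-invariant, and that it is not of the form $L(N)$ for any normal subgroup $N\trianglelefteq G$.

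\emph{Invariance.} Since $L(F_2^\infty)$ is already $G$-invariant ($F_2^\infty$ being normal) and $G$ is generated by $GL(\infty,F_2)$ and $F_2^\infty$, it suffices to conjugate the generators $u_gf_g$ by these two types of elements. For $h\in GL(\infty,F_2)$, the relation \eqref{eq: f_g commutation} gives $u_hf_gu_h^*=f_{hgh^{-1}}$, so $u_h(u_gf_g)u_h^*=u_{hgh^{-1}}f_{hgh^{-1}}$ is again a generator. For $v\in F_2^\infty$, using $u_vu_g=u_gu_{g^{-1}(v)}$ (a consequence of \eqref{eq: product in H}) together with the commutativity inside $L(F_2^\infty)$ one gets $u_v(u_gf_g)u_v^*=u_g\,u_{g^{-1}(v)+v}\,f_g$; since $g^{-1}(v)+v\in R(g^{-1}-I)=R(g-I)$ and $u_wf_g=f_g$ for every $w\in R(g-I)$, this is again $u_gf_g$. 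Hence $\mathcal M_{exo}$ is $G$-invariant (in fact the generators $u_gf_g$ are fixed by conjugation by $F_2^\infty$).

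\emph{Exoticness.} By Lemma \ref{lemma: normal subgroups of H} the only normal subgroups of $G$ are $\{e\}$, $F_2^\infty$ and $G$, so it is enough to prove the strict inclusions $L(F_2^\infty)\subsetneq\mathcal M_{exo}\subsetneq L(G)$. The first is immediate: with $s=(12)$ the element $u_sf_s\in\mathcal M_{exo}$ has Fourier support disjoint from $F_2^\infty$ and is nonzero, so $u_sf_s\notin L(F_2^\infty)$. For the second I would construct a proper $G$-invariant over-algebra of $\mathcal M_{exo}$. Let $E_0:L(G)\to L(F_2^\infty)$ be the canonical conditional expectation and, for $g\in GL(\infty,F_2)$, let $\Phi_g(x)=E_0(u_g^*x)$ be the normal map extracting the $L(F_2^\infty)$-valued Fourier coefficient of $x$ over the coset $gF_2^\infty$. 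Put
\[\mathcal D=\{x\in L(G):\Phi_g(x)=f_g\Phi_g(x)\ \text{for all }g\in GL(\infty,F_2)\}.\]
Normality of the $\Phi_g$ makes $\mathcal D$ a weakly closed, $*$-closed subspace (for closure under $*$ one uses $u_hf_hu_h^*=f_h$). A direct computation gives $\Phi_g(a)=\delta_{g,e}\,a$ for $a\in L(F_2^\infty)$ and $\Phi_g(u_hf_h)=\delta_{g,h}\,f_h$, so every generator of $\mathcal M_{exo}$ satisfies the defining condition of $\mathcal D$; granting that $\mathcal D$ is an algebra, this yields $\mathcal M_{exo}\subseteq\mathcal D$. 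On the other hand $\Phi_s(u_s)=1\neq f_s$ (since $\#R(s-I)=2$, so $f_s\neq 1$), hence $u_s\notin\mathcal D$ and a fortiori $u_s\notin\mathcal M_{exo}$. Combining, $L(F_2^\infty)\subsetneq\mathcal M_{exo}\subseteq\mathcal D\subsetneq L(G)$, so $\mathcal M_{exo}$ equals none of $\mathbb C 1$, $L(F_2^\infty)$, $L(G)$, and is therefore an exotic invariant subalgebra.

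\emph{The main obstacle} is verifying that $\mathcal D$ is closed under multiplication. Expanding $x=\sum_hu_h\Phi_h(x)$ and $y=\sum_ku_k\Phi_k(y)$ (convergent in $\|\cdot\|_2$) one finds $\Phi_g(xy)=\sum_h\big(u_{h^{-1}g}^*\,\Phi_h(x)\,u_{h^{-1}g}\big)\,\Phi_{h^{-1}g}(y)$, where for $x,y\in\mathcal D$ the $h$-th summand lies in $f_{g^{-1}hg}\,f_{h^{-1}g}\,L(F_2^\infty)$. The crucial algebraic input is the range inclusion $R(ab-I)\subseteq R(b^{-1}ab-I)+R(b-I)$ for $a,b\in GL(\infty,F_2)$ — which follows from $R(ab-I)\subseteq R(a-I)+R(b-I)$, from $R(b^{-1}-I)=R(b-I)$, and from $R(a-I)\subseteq b^{-1}R(a-I)+R(b-I)$ — applied with $a=h$, $b=h^{-1}g$, giving $R(g-I)\subseteq R(g^{-1}hg-I)+R(h^{-1}g-I)$. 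Since $f_{g_1}f_{g_2}$ is the uniform averaging projection onto the subgroup $R(g_1-I)+R(g_2-I)$, this inclusion forces $f_{g^{-1}hg}f_{h^{-1}g}\leqslant f_g$, so each summand lies in $f_gL(F_2^\infty)$ and, after passing to the $\|\cdot\|_2$-limit, $\Phi_g(xy)=f_g\Phi_g(xy)$ follows. The remaining points — normality of the $\Phi_g$, the two coefficient computations, and the convergence bookkeeping for the infinite sums — are routine, relying only on the relations collected in \eqref{eq: f_g commutation}.
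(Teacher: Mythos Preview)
Your proof is correct. The invariance argument matches the paper's (and is, if anything, cleaner: you work directly with the generators and observe that each $u_gf_g$ is actually fixed under $F_2^\infty$-conjugation). The exoticness argument, however, is organised differently. The paper first shows that linear combinations of the elements $u_gf_gu_v$ are $\|\cdot\|_2$-dense in $\mathcal M_{exo}$ --- this requires the multiplicative closure computation $f_{h^{-1}gh}f_h=f_{gh}\cdot f_{h^{-1}gh}f_h$, based on the same range arithmetic you use --- and then checks directly that the single element $u_t(u_{v_0}-u_{v_1})=2u_t f_t^\perp$ (with $t=I+E_{12}$) is $\tau$-orthogonal to every such $u_gf_gu_v$, whence $\mathcal M_{exo}\neq L(G)$. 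You instead encapsulate the target in the auxiliary algebra $\mathcal D=\{x:\Phi_g(x)=f_g\Phi_g(x)\ \forall g\}$, push the range computation into the verification that $\mathcal D$ is multiplicatively closed, and then exclude $u_s$ by a one-line coefficient check. The two routes are equivalent in content: the paper's orthogonality computation is precisely the statement that $\Phi_t(\mathcal M_{exo})\subseteq f_tL(F_2^\infty)$, i.e.\ one instance of your defining condition for $\mathcal D$. What your approach buys is a more conceptual description --- in fact, writing any $x\in\mathcal D$ as the $\|\cdot\|_2$-convergent sum $\sum_g u_gf_g\Phi_g(x)$ with each term in $\mathcal M_{exo}$ shows $\mathcal D=\mathcal M_{exo}$, which dovetails with the formula $E(g)=u_gf_g$ established later in the paper. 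What the paper's approach buys is that, once the spanning set is in hand, the final orthogonality check is a two-line trace calculation with no convergence bookkeeping.
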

\begin{proof}
 First, let us show that linear combinations of elements of the form $u_g f_g u_v$, $g\in GL(\infty, F_2), v\in  F_2^\infty$, are dense in $\mathcal M$. We have for any $g,h\in GL(\infty,F_2),v,w\in F_2^\infty$:
$$u_gf_gu_v\cdot u_hf_hu_w=u_{gh}\cdot f_{h^{-1}gh}f_h\cdot u_{h^{-1}(v)+w}.$$ Next, let us show that \begin{equation}\label{equation: hfh multiplication}f_{h^{-1}gh}f_h=f_{gh}\cdot A\end{equation} for some $A\in L(F_2^\infty)$. Observe that $$R(gh-I)=R((g-I)h+(h-I))\subset R(g-I)+R(h-I).$$ On the other hand, \begin{align*}
	R(h^{-1}gh-I)+R(h-I)=h^{-1}R(g-I)+R(h-I)=\\ ((h^{-1}-I)+I)R(g-I)+R(h-I)=R(g-I)+R(h-I),\end{align*} since $(h^{-1}-I)R(g-I)=-(h-I)h^{-1}R(g-I)\subset R(h-I)$. From the latter it follows that $$f_{h^{-1}gh}f_h=
    f_{gh}\cdot f_{h^{-1}gh}f_h.$$ 
    We obtain that the space $\mathcal S$ of linear combinations of the elements of the form $u_gf_gu_v$ is closed under multiplication. Since it generates $\mathcal M$ as a von Neumann algebra, $\mathcal S$ is dense in $\mathcal M$.

Further, observe that for any  $g,h\in GL(\infty, F_2)$, $v,w\in F_2^\infty$ one has:
$$u_h\cdot u_gf_gu_v\cdot u_{h^{-1}}=u_{hgh^{-1}}f_{hgh^{-1}} u_{h(v)},\;\;u_w\cdot u_gf_gu_v\cdot u_{-w} =u_gf_gu_{v+g^{-1}(w)-w}.$$ It follows that $\mathcal S$ is closed under conjugation by elements of $H$, and so is $\mathcal M$. Thus, $\mathcal M$ is an invariant subalgebra in $L(G)$.

Suppose now that $\mathcal M=L(N)$ for some normal subgroup $N$ of $G$. Since for any $g\in GL(\infty, F_2)$ the subalgebra $\mathcal M$ contains nonzero elements of the form $u_gA$, where $A\in L( F_2^\infty)$, and $\mathcal{M}$ does contain $L( F_2^\infty)$, we deduce that $N$ contains both $GL(\infty,F_2)$ and $F_2^\infty$. Thus, we would necessarily have $N=G$. 

Consider the matrix $t=I+E_{12}$, where $E_{12}$ is the matrix having a nonzero element of $ F_2$ only at $(1,2)$ coordinate (first row, second column). Let $v_0\in F_2^\infty$ be the zero vector column and $v_1\in F_2^\infty$ be the vector column having $1$ at the first place and $0$ everywhere else. Notice that $t = t^{-1}$ and $f_t=u_{v_0}+u_{v_1}$. Consider the expression $\tau(u_t(u_{v_0}-u_{v_1})u_gf_gu_v),$ $g\in GL(\infty, F_2),v\in F_2^\infty$. Clearly, it is equal to zero if $g\neq t^{-1}$. If $g=t^{-1}$ then  $$\tau(u_t(u_{v_0}-u_{v_1})u_gf_gu_v)=\tau(u_t(u_{v_0}-u_{v_1})(u_{v_0}+u_{v_1})u_{t^{-1}}u_v)=0,$$
as $v_1+v_1 = v_0$. Therefore, $\mathcal M$ cannot coincide with $L(G)$. This finishes the proof.
\end{proof}

We are ready to discuss the last outstanding case. To simplify the notations, we will identify every $g\in G$ with the operator $u_g\in L(G)$.
	\begin{lemma}\label{prop: (01 10)E((01 10)) other}
If $sE(s)=[0,0]+[1,1]$ 	then  $\mathcal {M}= \mathcal{M}_{exo}$. 
\end{lemma}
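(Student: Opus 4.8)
The goal is to show that under the assumption $sE(s) = [0,0]+[1,1]$, the invariant subalgebra $\mathcal M$ equals $\mathcal M_{exo}$. I would proceed by establishing the two inclusions separately, with the "easy" inclusion $\mathcal M_{exo} \subseteq \mathcal M$ coming first. For this, note that by Proposition \ref{lemma: E(01 10) for cross product} (case $b$) in its proof) we actually already know that $A^2 = (sE(s))^2 \in \mathcal M$ is a non-scalar element of $L(F_2^\infty)$, so Proposition \ref{prop: M and L(Z2infty)} forces $L(F_2^\infty) \subseteq \mathcal M$. Hence $\mathcal M$ contains $[0,0]+[1,1]$ and all of $L(F_2^\infty)$, and therefore $u_s \cdot ([0,0]+[1,1]) = E(s) \in \mathcal M$. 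Now $[0,0]+[1,1] = f_s$ in the notation of \eqref{eq: f_g} (since $R(s-I)$ is the span of $e_1+e_2$, a $2$-element subgroup, and $\tfrac12 \mathbf 1_{R(s-I)} = [0,0]+[1,1]$ is exactly the spectral projection one computes), so $u_s f_s \in \mathcal M$. Conjugating $u_s f_s$ by elements of $GL(\infty,F_2)$ and using \eqref{eq: f_g commutation} to see that conjugation sends $u_s f_s$ to $u_{hsh^{-1}} f_{hsh^{-1}}$, and using that every transvection $I + E_{ij}$ is $GL(\infty,F_2)$-conjugate to $s$ together with Lemma \ref{lemma: factorization in GL}, I would deduce $u_g f_g \in \mathcal M$ for every $g \in GL(\infty,F_2)$: writing $g = s_1 \cdots s_k$ with each $s_i$ conjugate to $s$ and $\sum_i R(s_i - I) = R(g-I)$, the product $(u_{s_1} f_{s_1}) \cdots (u_{s_k} f_{s_k})$ equals $u_g \cdot (\text{product of the } f_{s_i}\text{'s, conjugated})$, which by the computation \eqref{equation: hfh multiplication} in the proof of Lemma \ref{prop: extra subalgebra of L(H)} equals $u_g f_g \cdot B$ for some $B \in L(F_2^\infty) \subseteq \mathcal M$; multiplying back by the projection $f_g$ (and arguing that the range projection works out, exactly because the ranges add up to $R(g-I)$) recovers $u_g f_g \in \mathcal M$. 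Since $\mathcal M_{exo}$ is generated by $L(F_2^\infty)$ and the $u_g f_g$, this gives $\mathcal M_{exo} \subseteq \mathcal M$.

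**The reverse inclusion.** For $\mathcal M \subseteq \mathcal M_{exo}$ I would use the conditional expectation $E$ and compute $E(u_h)$ for an arbitrary $h = u_g u_v \in G$, showing that $E(u_h) \in \mathcal M_{exo}$; since $\mathcal M = E(L(G))$ and $E$ is normal, this suffices. Concretely I would argue: (i) $E(u_v) = u_v$ for all $v \in F_2^\infty$ by Lemma \ref{cor: E(v)=0 or v} (since we already know $L(F_2^\infty) \subseteq \mathcal M$), so it remains to understand $E(u_g u_v)$ for $g \neq e$; (ii) using Lemma \ref{lemma: fpc application} and Lemma \ref{lem:fpc(v)}-type finiteness arguments, $E(u_g u_v) = u_g u_v \cdot C_{g,v}$ for some $C_{g,v} \in L(F_2^\infty)$ supported on a finite-dimensional subalgebra; (iii) the key point is to pin down $C_{g,v}$ using the character $\chi(x) = \tau(x^{-1}E(x))$. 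By the decomposition theory for characters (Theorem \ref{theorem: characters Gl times F2infty}) and the hypothesis $\chi(s) = \tau(s^{-1}([0,0]+[1,1])) = \tau([0,0]+[1,1]) = \tfrac12$, one reads off which convex combinations of the $\chi_{k,d}$ are possible: $\chi(s) = \tfrac12$ forces the combination to be supported on $\chi_{1,0}$ and $\chi_{1,1}$ (the characters with $k=1$), since $\chi_{k,d}(s) = 2^{-k\,\mathrm{rank}(s-I)} = 2^{-k}$ (and the $\chi_{k,0}$ may vanish at $s v$ depending on whether $v \in R(s-I)$). From the explicit formula $\chi_{1,1}(g v) = 2^{-\mathrm{rank}(g-I)}$ and $\chi_{1,0}(gv) = 2^{-\mathrm{rank}(g-I)} \mathbf 1_{R(g-I)}(v)$, one computes $\chi(gv) = 2^{-\mathrm{rank}(g-I)}\bigl(\alpha + \beta\,\mathbf 1_{R(g-I)}(v)\bigr)$ for the mixing weights $\alpha,\beta$, and then — together with the trace pairing $\tau(u_{gv}^{-1}E(u_{gv})) = \tau(C_{g,v})$ and the analogous pairings against $u_{gv}^{-1} u_w$ for $w \in F_2^\infty$ — this forces $C_{g,v}$ to be a scalar multiple of $f_g = \tfrac{1}{\#R(g-I)}\mathbf 1_{R(g-I)}$, and in fact $E(u_g u_v) = c\, u_g u_v f_g$ or $= c\, u_g f_g u_v$ for an appropriate constant. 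The cleanest way to finish is to show the weights are $\alpha = 0$, $\beta = 1$ — or more precisely that $\chi(gv) = 2^{-\mathrm{rank}(g-I)} \mathbf 1_{R(g-I)}(v)$, which is exactly the character associated to $\mathcal M_{exo}$; one pins this down by revisiting case $b$) in the proof of Proposition \ref{lemma: E(01 10) for cross product}, where $c_{00}=c_{11}=1$ and $c_{01}=c_{10}=0$ was already established, giving $\chi(s v) = \tau\bigl((su_v)^{-1} s([0,0]+[1,1])\bigr)$, which equals $1$ when $v \in R(s-I) = \{0^\infty, e_1+e_2\}$ and $0$ otherwise, matching $\chi_{1,0}$.

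**Putting it together.** Once $E(u_g u_v) = u_g f_g u_v$ (up to reindexing the $F_2^\infty$-part; note $u_g f_g u_v = u_{gv} f_g$ and $f_g u_v = u_v f_{v^{-1}gv} = u_v f_g$ since $v \in F_2^\infty$ centralizes nothing but $R(g-I)$ is $F_2^\infty$-invariant under this conjugation — conjugation by $v$ fixes $R(g-I)$ setwise as a coset, so actually one should be slightly careful and use $f_g u_v = u_v f_g$ which holds because $R(g-I)$ is a subgroup and conjugation by $v \in F_2^\infty$, being abelian, acts trivially), it follows that $E(u_g u_v) \in \mathcal M_{exo}$ for all generators, hence $\mathcal M = \overline{E(L(G))}^{\,w} \subseteq \mathcal M_{exo}$. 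Combined with the first inclusion this gives $\mathcal M = \mathcal M_{exo}$.

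**Main obstacle.** The delicate part is step (iii) of the reverse inclusion: translating the constraint $\chi(s) = \tfrac12$ into the precise statement that $\chi = \chi_{1,0}$ (equivalently, determining $C_{g,v}$ exactly, not just up to the ambiguity of a convex combination of $\chi_{1,0}$ and $\chi_{1,1}$), and then lifting the scalar-valued character identity back to the operator-valued identity $E(u_g u_v) = u_g f_g u_v$. The character only sees $\tau(u_{gv}^{-1} E(u_{gv}))$, so to recover the full Fourier expansion of $E(u_{gv})$ inside $u_{gv} \cdot L(F_2^\infty)$ I will need to pair against all $u_{gv}^{-1} u_w$, $w \in F_2^\infty$, and use the $GL(\infty,F_2)$-invariance of $E$ (Proposition \ref{prop: E properties} (2)) to transport the single computation at $s$ — where Proposition \ref{lemma: E(01 10) for cross product} already did the work, giving $c_{00}=c_{11}=1$, $c_{01}=c_{10}=0$ — to all conjugates, and then the factorization Lemma \ref{lemma: factorization in GL} plus the multiplicativity of $E$ on the relevant generators to all of $GL(\infty,F_2)$. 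Disentangling which of $\chi_{1,0}$, $\chi_{1,1}$ actually occurs (it must be $\chi_{1,0}$, because $\chi_{1,1}$ would correspond to an invariant subalgebra strictly larger than $\mathcal M_{exo}$, contradicting the already-computed Fourier coefficients of $E(s)$) is where the argument has to be made carefully.
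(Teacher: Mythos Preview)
Your easy inclusion $\mathcal M_{exo}\subseteq\mathcal M$ is fine and matches what the paper does inside case~$b)$. The reverse inclusion, however, has two genuine gaps.

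\textbf{Step (ii) is not justified.} For $g\in GL(n,F_2)$ one only gets $\fpc(g)\subseteq GL(n,F_2)\ltimes F_2^n$, so Lemma~\ref{lemma: fpc application} yields $g^{-1}E(g)=\sum_{h\in GL(n,F_2)} hB_h$ with $B_h\in L(F_2^n)$; it does \emph{not} give $E(u_g)\in u_g\cdot L(F_2^\infty)$. Commutation with $L(F_2^\infty)\subseteq\mathcal M$ alone only forces $B_h(u_{h^{-1}(v)-v}-1)=0$, which is far from $B_h=0$ for $h\neq e$. The paper eliminates these off-diagonal terms by a separate argument: it shows $g^{-1}E(g)$ must commute with $E((n,n+1))=([\star^{n-1},0,0]+[\star^{n-1},1,1])(n,n+1)$, and uses the fact that $h\cdot(n,n+1)$ with $h_{1n}=1$ cannot be written as $(n,n+1)\cdot h'$ for $h'\in GL(n,F_2)$ to force $B_h([\star^{n-1},0,0]+[\star^{n-1},1,1])=0$, hence $B_h=0$. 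Your ``Lemma~\ref{lem:fpc(v)}-type finiteness arguments'' do not supply this.

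\textbf{Step (iii) is incorrect in two places.} First, $\chi(s)=\tfrac12$ does \emph{not} force the decomposition to be supported on $k=1$: e.g.\ $\tfrac12\chi_{0,1}+\tfrac12\chi_{\infty,1}$ also gives $\chi(s)=\tfrac12$. Second, your computation of $\chi(sv)$ drops a factor: since $u_v\in L(F_2^\infty)\subseteq\mathcal M$ one has $E(su_v)=E(s)u_v=s([0,0]+[1,1])u_v$, so $\chi(sv)=\tau(u_v^{-1}([0,0]+[1,1])u_v)=\tau([0,0]+[1,1])=\tfrac12$ for \emph{every} $v$, and the character associated to $\mathcal M_{exo}$ is $\chi_{1,1}$, not $\chi_{1,0}$. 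The paper pins down the character by a completely different mechanism: once step~(ii) is done, $A_g=g^{-1}E(g)$ is a projection in $L(F_2^n)$, whence $\chi(g)=\tau(A_g)=\#\Omega_g/2^n\in\mathbb Z/2^n$. This dyadic integrality, applied to $g_n\in GL(n,F_2)$ with $\mathrm{rank}(g_n-I)=n$ and a limiting argument on $2^{n+1}(\chi(g_n)-\chi(g_{n+1}))$, forces $\alpha_1\in\{0,1\}$; the case $\alpha_1=0$ is then excluded by observing that the order-$3$ element $t=\begin{bmatrix}0&1\\1&1\end{bmatrix}$ would require a $t$-invariant $2$-element subset of $\{0,1\}^2$, which does not exist. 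Only after $\chi(g)=2^{-\mathrm{rank}(g-I)}$ is established does the paper conclude $E(g)=gf_g$, via the $\|\cdot\|_2$-distance identity $\|g-gf_g\|_2^2=1-\tau(f_g)=1-\chi(g)=\|g-E(g)\|_2^2$ combined with $gf_g\in\mathcal M$.
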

\begin{proof}
	Since $E(s)E^*(s) \in L(F_2^\infty)\setminus \mathbb C\id$, from Corollary \ref{cor: E(v)=0 or v} we derive that $\mathcal M\supseteq L(F_2^\infty)$.

Suppose that there exists an element $g\in GL(\infty,F_2)$ such that $E(u_g)\neq u_g f_g$. 
Let $n\in\mathbb N$ be such that $g\in GL(n,F_2)$. Let $(n,n+1)\in S_\infty\subseteq GL(\infty, F_2)$ be the transposition of the $n$-th and $(n+1)$-st basis vectors of $F_2^\infty$. We will use that $g^{-1}E(g)\in \mathcal M'$ and so $g^{-1}E(g)$ commutes with $$E((n,n+1))=(1,n)(2,n+1)E((1,2))(2,n+1)(1,n)=([\star^{n-1},0,0]+[\star^{n-1},1,1])(n,n+1).$$

It is not hard to see that $\fpc(g)\subseteq GL(n,F_2)\ltimes F_2^n$. Therefore, we can write 
$$g^{-1}E(g)=\sum\limits_{h\in GL(n,F_2)}h B_h,\;\;B_h\in L(F_2^n).$$ Assume that $B_h\neq 0$ for some $h\neq e$. Then $h_{ij}=1$ for some $1\leqslant i,j\leqslant n$, $i\neq j$. For $t=(1,i)(j,n)\in S(n)\subseteq GL(n,F_2)$ we have that $tht^{-1}$ has $1$ at $1$st row, $n$th column. Thus, replacing $g$ by $tgt^{-1}$, we may assume that $h_{1n}=1$. Then $C=h(n,n+1)$ has $C_{1(n+1)}=1$. Such matrix $C$ can not be written as $(n,n+1)h'$ for some $h'\in GL(n,F_2)$. Taking into account that $g^{-1}E(g)$ and $E((n,n+1))$ commute we obtain that $h B_h([\star^{n-1},0,0]+[\star^{n-1},1,1])(n,n+1)=0$. Therefore, $B_h([\star^{n-1},0,0]+[\star^{n-1},1,1])=0$.

Further, write $B_h=\sum\limits_{a\in \widehat F_2^n}c_a[a]$, $c_a\in\mathbb C$. Then $$0=B_h([\star^{n-1},0,0]+[\star^{n-1},1,1])=\sum\limits_{a\in \widehat F_2^n}c_a[a_1,a_2,\ldots,a_{n-1},a_n,a_n]$$ implies that $c_a=0$ for every $a$, and so $B_h=0$. This contradicts our assumption. Thus, $B_h=0$ for all $h\neq e$.

We thus have $E(g)=gA_g$, where $A_g\in L(F_2^n)$. Since $\mathcal M\supseteq L(F_2^n)$, we obtain: $gA_g=E(g)=E(E(g))=gA_g^2$. It follows that 
\begin{equation}\label{eq: Ag expansion}
	A_g=\sum\limits_{a\in\Omega_g}[a],\text{ where }\Omega_g\subseteq\{0,1\}^n.
\end{equation}
Consider the character $\chi(g)=\tau(g^{-1}E(g))$, $g \in GL(\infty,F_2)$. For any $n\in\mathbb N$ and any $g\in GL(n,F_2)$ one has:
\begin{equation}\label{eq: tau(Ag)}
	\chi(g)=\tau(A_g)=\frac{\#\Omega_g}{2^n}\in \mathbb Z_+/2^n.
\end{equation}
By decomposition theorem, taking into account the Skudlarek's description of indecomposable characters on $GL(\infty,F_2)$ in \cite{Skudlarek}, we obtain that there exist non-negative numbers $\alpha_i,i\in\mathbb Z_+\cup\{\infty\}$ with $\sum\alpha_{i\in\mathbb Z_+\cup\{\infty\}}=1$ such that 
$$\chi(g)=\alpha_0+\sum\limits_{k\in\mathbb N}\alpha_k\,2^{-k\,\mathrm{rank}(g-I)}+\alpha_\infty\delta_{g,I},\;\;\;\;g\in GL(\infty,F_2).$$ 
For each $n\in\mathbb N, n\geqslant 2$, fix any $g_n\in GL(n,F_2)$ such that $\mathrm{rank}(g_n-I)=n$. We obtain:
\begin{equation}\label{eq: chi(gn)-chi(gn+1)}
	\mathbb Z\supseteq 2^{n+1}(\chi(g_n)-\chi(g_{n+1}))=\alpha_1+
	\sum\limits_{k\geqslant 2}\alpha_k(2\cdot2^{(1-k)n}-2^{(1-k)(n+1)}).
\end{equation}
Taking the limit with $n\to\infty$ we get $\alpha_1\in\mathbb Z$. We will now consider two possible cases.

\smallskip\noindent
$a)$ $\alpha_1=0$. In this case, it is not hard to derive from \eqref{eq: chi(gn)-chi(gn+1)} that $\alpha_k=0$ for all $k\in\mathbb N$. Then $\chi(g)=\alpha_0$ for all $g\in GL(\infty,F_2)\setminus \{I\}$. Since $\chi(s)=1/2$ we get $\alpha_0=1/2$. Consider the element $t=\begin{bmatrix}
	0&1\\1&1
\end{bmatrix}$. Then the set $\Omega_t\subseteq\{0,1\}^2$ is $t$-invariant and consists of $4/2=2$ elements. However, $t$ has one fixed point and a period 3 orbit in $\{0,1\}^2$. This contradiction shows that the case $\alpha_1=0$ is impossible. 

\smallskip\noindent
$b)$ $\alpha_1=1$. In this case, $\chi(g)=2^{-\mathrm{rank}(g-I)}$ for all $g\in GL(\infty, F_2)$. Fix $g\in GL(\infty,F_2)$. Let $g=s_1s_2\cdots s_k$ be the factorization of $g$ according to Lemma \ref{lemma: factorization in GL}. Since $E(s)=sf_s$, we have $E(s_i)=s_if_{s_i}$ for each $1\leqslant i\leqslant k$. Moreover, $$\prod\limits_{i=1}^k E(s_i)=\prod\limits_{i=1}^k s_if_{s_i}.$$ Let us show that the latter expression coincides with $gf_g$. We have by \eqref{eq: f_g commutation}:
$$\prod\limits_{i=1}^k s_if_{s_i}=\prod\limits_{i=1}^k s_i\prod\limits_{i=1}^k f_{t_i^{-1}s_it_i},$$ where $t_i=s_{i+1}s_{i+2}\cdots s_k$, $i=1,\ldots k$. As $f_t$, $t\in GL(\infty,F_2)$, are pairwise commuting orthogonal projections, the element $P=\prod\limits_{i=1}^k f_{t_i^{-1}s_it_i}$ is an orthogonal projection. 

Notice that by \eqref{eq: f_g commutation} for each $i$, $f_{t_i^{-1}s_it_i}\geqslant \prod\limits_{j=i}^{n} f_{s_i}\geqslant f_g$, since $R(s_j-I)\subseteq R(g-I)$ for all $j=i, \ldots,n$. We now claim that $P=f_g$. First, $P\leq f_g$ since $f_sf_t\leq f_{st}$. Then $P\leq f_g$ is equivalent to saying that $t_1^{-1}R(s_1-1)+t_2^{-1}R(s_2-1)+\cdots+t_{k-1}^{-1}R(s_{k-1}-1)+R(s_k-1)\subseteq R(g-1)$. Our goal is to show that equality holds. To show this, we may treat $F_2^{\infty}$ as an inner product space. So we just take any vector $\xi$ which is perpendicular to the sum of subspaces in the LHS, we aim to show $\xi$ is also orthogonal to $R(g-1)$ on the RHS. Translate this condition using inner product, we have $$(s_1-1)t_1\xi=0, (s_2-1)t_2\xi=0,\cdots,(s_k-1)\xi=0.$$ It is not hard to check that these identities actually imply that $\xi$ is invariant under all $s_1,\cdots, s_k$. So $g^{-1}\xi=\xi$, equivalently, $\xi$ is perpendicular to $R(g-I)$. Thus, $P\geqslant f_g$. 

Since $f_g\in L(F_2^\infty)\subseteq\mathcal M$ (see the beginning of the proof), we have that $gf_g=gP\cdot f_g\in \mathcal M$. Observe that 
$$\tau((g-gf_g)(g-gf_g)^\ast)=1-\tau(f_g)=1-2^{-\mathrm{rank}(g-I)}=1-\tau(g^{-1}E(g))=\tau((g-E(g))(g-E(g))^*).$$ Since $E$ is a projection onto $\mathcal M$ with respect to $\|\cdot\|_2$, we obtain that $E(g)=gf_g$. This implies that $\mathcal M$ coincides with the algebra $\mathcal{M}_{exo}$.
\end{proof}

\begin{remark}
Note that the proof of the above lemma shows in particular that for $\mathcal{M} = \mathcal{M}_{exo}$ we have $E(g) = gf_g$ for every $g  \in GL(\infty,F_2)$. 
\end{remark}

The next result summarizes the contents of this section and, in particular, establishes Theorem \ref{thm:B}.

\begin{corollary}
	The von Neumann algebra $L(\Aut_\fin (\Z_2^\infty)\ltimes \Z_2^\infty )$ contains four distinct invariant von Neumann subalgebras: $L(\Aut_\fin (\Z_2^\infty)\ltimes \Z_2^\infty )$, $\C 1$, $L(F_2^\infty)$ and $\mathcal{M}_{exo}$, the first three of which correspond to normal subgroups of $\Aut_\fin (\Z_2^\infty)\ltimes \Z_2^\infty $.
\end{corollary}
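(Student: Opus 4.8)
The plan is to combine the results established throughout this section into one short case analysis. First I would record that, by Lemma~\ref{lemma: normal subgroups of H}, the only non-trivial normal subgroup of $G=\Aut_\fin(\Z_2^\infty)\ltimes\Z_2^\infty\cong GL(\infty,F_2)\ltimes F_2^\infty$ is $F_2^\infty$, so the invariant subalgebras coming from normal subgroups are precisely $\C 1=L(\{e\})$, $L(F_2^\infty)$ and $L(G)$; these are pairwise distinct because $F_2^\infty$ is a non-trivial proper subgroup of $G$. By Lemma~\ref{prop: extra subalgebra of L(H)}, $\mathcal{M}_{exo}$ is an invariant subalgebra which is not of the form $L(N)$, hence differs from all three of them. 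This already exhibits four distinct invariant subalgebras, and it remains to see that every invariant subalgebra is one of these four.

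For this, let $\mathcal M\subseteq L(G)$ be an arbitrary invariant von Neumann subalgebra, with $\tau$-preserving conditional expectation $E\colon L(G)\to\mathcal M$, and take $s=(12)\in GL(2,F_2)\subseteq G$ as in~\eqref{eq: s = 01 10}. Proposition~\ref{lemma: E(01 10) for cross product} shows that $E(s)=s\cdot A$ with $A=sE(s)\in\{0,1,[0,0]+[1,1]\}$ (using $s=s^{-1}$), and these three mutually exclusive possibilities are settled by the three lemmas following that proposition: (a) if $sE(s)=1$ then $\mathcal M=L(G)$ by Lemma~\ref{prop: (01 10)E((01 10))=1}; (b) if $sE(s)=0$ then $\mathcal M\in\{\C 1,L(F_2^\infty)\}$ by Lemma~\ref{prop: (01 10)E((01 10))=0}; (c) if $sE(s)=[0,0]+[1,1]$ then $\mathcal M=\mathcal{M}_{exo}$ by Lemma~\ref{prop: (01 10)E((01 10)) other}. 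In every case $\mathcal M$ lies among the four algebras listed, which completes the proof.

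Since all the substantive work --- the $\fpc$ computations, the evaluation of $E(s)$, the identification of $\mathcal M$ in each of the three cases, and the verification that $\mathcal{M}_{exo}$ is genuinely exotic --- has already been carried out, there is no real obstacle left; the argument is purely bookkeeping. The one point worth spelling out is the internal consistency of case (c): applying Proposition~\ref{lemma: E(01 10) for cross product} to $\mathcal{M}_{exo}$ itself, its value of $sE(s)$ must be one of the three, and it cannot be $0$ or $1$ (else cases (a) or (b) would force $\mathcal{M}_{exo}$ to come from a normal subgroup, contradicting Lemma~\ref{prop: extra subalgebra of L(H)}); alternatively, one checks directly from~\eqref{eq: delta_i^j} and~\eqref{eq: f_g} that $f_s=\tfrac12(1+u_{e_1+e_2})=[0,0]+[1,1]$, which together with the Remark after Lemma~\ref{prop: (01 10)E((01 10)) other} gives $sE(s)=[0,0]+[1,1]$ for $\mathcal M=\mathcal{M}_{exo}$.
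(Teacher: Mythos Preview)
Your proof is correct and follows essentially the same approach as the paper's own proof, which simply states that the result follows immediately from Proposition~\ref{lemma: E(01 10) for cross product} and Lemmas~\ref{prop: (01 10)E((01 10))=1}--\ref{prop: (01 10)E((01 10)) other}. You have spelled out the case analysis in more detail and added the explicit verification of distinctness (via Lemma~\ref{lemma: normal subgroups of H} and Lemma~\ref{prop: extra subalgebra of L(H)}) as well as the consistency check for $\mathcal{M}_{exo}$, but the underlying argument is identical.
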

\begin{proof}
Follows immediately from Proposition \ref{lemma: E(01 10) for cross product} and  Lemmas \ref{prop: (01 10)E((01 10))=1}-\ref{prop: (01 10)E((01 10)) other}.
\end{proof}
\section{Invariant Subalgebras of the classical lamplighter group}
\label{Section: Lamplighter}
In this section, we focus our attention on the classical lamplighter group $G=\mathbb{Z}_2\wr\mathbb{Z}=(\oplus_{\mathbb{Z}}\Z_2)\rtimes \mathbb{Z}$. We aim to study all $G$-invariant von Neumann subalgebras in $L(G)$. First, note that as $G$ is a metabelian group with $\mathbb{Z}$ as the quotient, we have a short exact sequence of groups
$$0\rightarrow \oplus_{\mathbb{Z}}\Z_2\rightarrow G\rightarrow \mathbb{Z}\rightarrow 0$$
Thus every subgroup of $G$ is isomorphic (abstractly as a group) to  $H\rtimes k\mathbb{Z}$ for some subgroup $H\subset \oplus_{\mathbb{Z}}\Z_2$ and some $k\in\mathbb{Z}$.  This does not mean that the abstract isomorphism is compatible with the natural inclusions $H\subset \oplus_{\mathbb{Z}}\Z_2\subset G$ and  $k\mathbb{Z}\subset \Z \subset G$, even if the subgroup in question is normal (see an example below in Remark \ref{remark: strange normal subgroups}).
For more information on the subgroup structure of $G$, see \cite{GriKra}. 

Given a p.m.p.\ action $(X,\mu)$ and a $\mathbb{Z}$-factor map $\pi: (X,\mu)\rightarrow (Y,\nu)$, 
let $\mu=\int_Y\mu_yd\nu(y)$ denotes the measure disintegration w.r.t.\ $\pi$. Set $X\times_YX=\{(x_1,x_2)\in X\times X: \pi(x_1)=\pi(x_2)\}$ and $\theta=\int_{X}\mu_y\times\mu_yd\nu(y)$. 
\begin{definition}\label{def: relatively weakly mixint}
    We say that $\pi$ is relatively weakly mixing if the action $\mathbb{Z}\curvearrowright(X\times_YX,\theta)$ is ergodic. 
\end{definition}\noindent 
For more details on relative weak mixing, see \cite[\S~2.5]{Furman_IMRN} and the references therein.

The following lemma is essentially a simple version of \cite[Lemma 3.2]{Furman_IMRN}, which should be compared to Popa’s original \cite[Lemma 2.11]{Popa_cocycle}. We decided to include a proof for completeness. 

\begin{lemma}\label{lem: locate elements using relative weak mixing}
Let $\langle s\rangle=\mathbb{Z}\curvearrowright (X,\mu)$ and $\mathbb{Z}\curvearrowright (Y,\nu)$ be two p.m.p.\ actions and $\pi: (X,\mu)\rightarrow (Y,\nu)$ be a factor map which is relatively weakly mixing. Denote the action of $\langle s\rangle$ by $\sigma : \Z \to \textup{Aut}(X, \mu)$.  If $v\in L^{\infty}(X,\mu)$ is a unitary element such that $\sigma_{s^n}(v)v^*\in L^{\infty}(Y,\nu)$ for all $n\in\mathbb{Z}$, then $v\in L^{\infty}(Y,\nu)$. 
\end{lemma}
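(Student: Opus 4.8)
The plan is to reformulate the hypothesis as a $\mathbb{Z}$-invariance property of a single natural function on the fibered product $X\times_Y X$ and then to extract the conclusion from relative weak mixing, in the spirit of \cite[Lemma~3.2]{Furman_IMRN} and \cite[Lemma~2.11]{Popa_cocycle}. Since $v$ is a unitary of the abelian algebra $L^\infty(X,\mu)$, we have $|v|=1$ $\mu$-a.e.; I would introduce $V\in L^\infty(X\times_Y X,\theta)$ defined by $V(x_1,x_2)=v(x_1)\overline{v(x_2)}$, which is of modulus $1$ $\theta$-a.e.

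The first step is to show that $V$ is invariant under the diagonal action $\mathbb{Z}\curvearrowright(X\times_Y X,\theta)$. The hypothesis says that for every $n$ we may write $\sigma_{s^n}(v)\,v^{*}=a_n\circ\pi$ for some $a_n\in L^\infty(Y,\nu)$. Evaluating at two points $x_1,x_2$ with $\pi(x_1)=\pi(x_2)$ gives $\sigma_{s^n}(v)(x_1)\overline{v(x_1)}=a_n(\pi(x_1))=\sigma_{s^n}(v)(x_2)\overline{v(x_2)}$, and after rearranging (using $|v|=1$) this reads $V(x_1,x_2)=\sigma_{s^n}(v)(x_1)\overline{\sigma_{s^n}(v)(x_2)}$, i.e.\ $V$ evaluated at the $s^{-n}$-translate of $(x_1,x_2)$. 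Here I would use that $\pi$ intertwines the two $\mathbb{Z}$-actions, so that the diagonal action indeed preserves $X\times_Y X$ and the measure $\theta=\int_Y\mu_y\times\mu_y\,d\nu(y)$ (via equivariance of the disintegration, $(\sigma_s)_*\mu_y=\mu_{\sigma_s y}$). Thus $V$ is a $\mathbb{Z}$-invariant element of $L^\infty(X\times_Y X,\theta)$.

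The second step is immediate: by Definition \ref{def: relatively weakly mixint}, relative weak mixing of $\pi$ means that $\mathbb{Z}\curvearrowright(X\times_Y X,\theta)$ is ergodic, so $V$ is $\theta$-a.e.\ equal to a constant $c\in\mathbb{T}$. Finally, from $v(x_1)\overline{v(x_2)}=c$ for $\theta$-a.e.\ $(x_1,x_2)$ together with $|v|=1$, I would integrate the identity $v(x_1)=c\,v(x_2)$ against $d\mu_{\pi(x_1)}(x_2)$, using the form of $\theta$, to obtain $v=c\,E_Y(v)$ $\mu$-a.e., where $E_Y\colon L^\infty(X,\mu)\to L^\infty(Y,\nu)$ denotes the conditional expectation along $\pi$. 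Since $c\,E_Y(v)\in L^\infty(Y,\nu)$, this yields $v\in L^\infty(Y,\nu)$, as desired.

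I expect the only genuinely delicate point to be the bookkeeping in the first step, namely verifying that ``$\sigma_{s^n}(v)v^*$ is $\pi$-measurable for every $n$'' translates exactly into diagonal $\mathbb{Z}$-invariance of $V$ on $(X\times_Y X,\theta)$; this requires some care with the direction of the action and with the equivariance of $\pi$ and of the disintegration. Everything after that is a routine application of ergodicity and of the defining properties of $E_Y$.
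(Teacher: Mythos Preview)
Your proof is correct and follows essentially the same approach as the paper: both define the function $V(x_1,x_2)=v(x_1)\overline{v(x_2)}$ on $(X\times_Y X,\theta)$, show it is $\mathbb{Z}$-invariant using the hypothesis, and invoke ergodicity to conclude it is a constant $c\in\mathbb{T}$. The only difference is in the endgame: the paper argues that $c=\pm 1$ via the symmetry $(x_1,x_2)\mapsto(x_2,x_1)$ and rules out $c=-1$ by a three-point argument, whereas you integrate $v(x_1)=c\,v(x_2)$ against $d\mu_{\pi(x_1)}(x_2)$ to obtain $v=c\,E_Y(v)\in L^\infty(Y,\nu)$ directly. Your finish is slightly cleaner, as it bypasses the case analysis on $c$ entirely.
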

\begin{proof}
Since $\sigma_{s^n}(v)v^*\in L^{\infty}(Y,\nu)$, we deduce  that
$[\sigma_{s^n}(v)v^*](x_1)=[\sigma_{s^n}(v)v^*](x_2)$
 for $\theta$-a.e. $(x_1, x_2)\in X\times_YX$, i.e. $v(s^{-n}x_1)v^*(x_1)=v(s^{-n}x_2)v^*(x_2)$, or equivalently
 \begin{align*}
 v(x_1)v^*(x_2)=v(s^{-n}x_1)v^*(s^{-n}x_2)~\text{for  
 $\theta$-a.e. $(x_1, x_2)\in X\times_YX$}.
 \end{align*}
 Set $\eta(x_1,x_2)=v(x_1)v^*(x_2)$ for  $\theta$-a.e. $(x_1,x_2)\in X\times_YX$. Then the above implies $\eta$ is $\mathbb{Z}$-invariant and hence must be a constant function by the relatively weakly mixing assumption. Since $v$ is a unitary, we deduce that $\eta(x_1,x_2)=v(x_1)v^*(x_2)=c\in\mathbb{T}$ for  $\theta$-a.e. $(x_1,x_2)\in X\times_YX$. By swapping the two coordinates, we also have $c=\eta(x_2,x_1)=v(x_2)v^*(x_1)=\overline{\eta(x_1,x_2)}=\overline{c}$. Hence $c=\pm 1$. 

If $c=-1$, then  for  $\theta$-a.e. $(x_1,x_2),(x_2,x_3),(x_3,x_1)\in X\times_YX$ we have:
 $$v(x_1)=-v(x_2)=-(-v(x_3))=v(x_3)=-v(x_1).$$ Thus $v=0$, a contradiction to the unitary assumption. Thus $c=1$ and hence $v(x_1)=v(x_2)$ for  $\theta$-a.e. $(x_1,x_2)\in X\times_YX$; equivalently, $v\in L^{\infty}(Y,\nu)$.   
\end{proof}
Let $A=\oplus_{\mathbb{Z}}\Z_2$. Note that $L(A)\cong L^{\infty}((\{0,1\}^{\mathbb{Z}}, \{\frac{1}{2},\frac{1}{2}\}^{\mathbb{Z}}))$. Therefore, one might suspect  that every $G$-invariant von Neumann subalgebra of $L(G)$ is of the form $L^{\infty}(Y,\nu)\rtimes k\mathbb{Z}$ for some $\mathbb{Z}$-factor $(Y,\nu)$ of the Bernoulli shift. It turns out that this is true in the case when the corresponding factor map is relatively weakly mixing, but not in general. 

Before we formulate the corresponding result, we need one more lemma. It can be deduced directly from the classical theorem of Ornstein that every non-trivial factor of a Bernoulli shift is still measurably conjugate to a  Bernoulli shift (\cite{orn}). We provide below a direct proof.

\begin{lemma} Every factor of the Bernoulli shift $\mathbb{Z}\curvearrowright (\{0,1\}^{\mathbb{Z}}, \{\frac{1}{2},\frac{1}{2}\}^{\mathbb{Z}})$ is either trivial or essentially free. 
\end{lemma}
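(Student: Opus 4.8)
The plan is to argue by contraposition: assuming the action induced on the factor is not essentially free, I will show the factor must be trivial. Concretely, represent the factor by a factor map $\pi\colon(X,\mu)\to(Y,\nu)$, where $(X,\mu)=(\{0,1\}^{\mathbb Z},\{\tfrac12,\tfrac12\}^{\mathbb Z})$ and $\pi$ intertwines the shift $s$ on $X$ with the induced transformation on $Y$, which I also denote $s$; equivalently, let $\mathcal B=\pi^{-1}(\mathcal B_Y)$ be the corresponding $s$-invariant sub-$\sigma$-algebra of $\mathcal B_X$. The one structural input I will use is that the Bernoulli shift is \emph{totally ergodic}: for every $n\in\mathbb Z\setminus\{0\}$ the transformation $s^n$ of $(X,\mu)$ is itself a Bernoulli shift (over the base $\{0,1\}^n$ with uniform measure) and in particular ergodic. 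Since ergodicity passes to factors, $s^n$ is ergodic on $(Y,\nu)$ for every $n\neq 0$ as well.

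Now suppose the action $\mathbb Z\curvearrowright(Y,\nu)$ is not essentially free. Then there is some $n\neq 0$ such that $F:=\{y\in Y: s^ny=y\}$ has $\nu(F)>0$. Because $s$ commutes with $s^n$, the set $F$ is $s$-invariant, so ergodicity of $s$ on $(Y,\nu)$ forces $\nu(F)=1$; that is, $s^n$ acts as the identity on $(Y,\nu)$. Pulling this back through $\pi$, for every $A=\pi^{-1}(B)\in\mathcal B$ we get $s^nA=\pi^{-1}(s^nB)=\pi^{-1}(B)=A$ modulo $\mu$-null sets, so every set in $\mathcal B$ is $s^n$-invariant. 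But $s^n$ is ergodic on $(X,\mu)$, so its $\sigma$-algebra of invariant sets is trivial modulo null sets; hence $\mathcal B$ is trivial modulo null sets, which is precisely the statement that $(Y,\nu)$ is the trivial system. This closes the contrapositive and proves the lemma.

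There is no serious obstacle here: the argument is elementary once the reformulation in terms of the invariant $\sigma$-algebra $\mathcal B$ is in place, and --- in contrast to the remark preceding the lemma --- it uses neither Ornstein's theorem nor any structure theory of Bernoulli factors. The only point deserving a line of justification is total ergodicity of the Bernoulli shift, which is why I would record explicitly that $s^n$ is again a Bernoulli shift. One could alternatively deduce what is needed from weak mixing of the Bernoulli shift, noting that if $s^n=\mathrm{id}$ on $Y$ then $\mathbb Z/n\mathbb Z$ acts ergodically on $(Y,\nu)$, forcing $(Y,\nu)$ to be finite, and a non-trivial finite factor would produce a non-constant eigenfunction of $s$, contradicting weak mixing; but the $\sigma$-algebra pullback route above seems the cleanest.
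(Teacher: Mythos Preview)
Your proof is correct. Both your argument and the paper's begin identically: if the factor is not essentially free, then some $s^n$ has a fixed-point set of positive measure, and ergodicity of $s$ on the factor forces $s^n=\mathrm{id}$ on $(Y,\nu)$. The divergence is in the endgame. You pull back to $X$ and use \emph{total ergodicity} of the Bernoulli shift: since $s^n$ is itself a Bernoulli shift and hence ergodic on $(X,\mu)$, the $s^n$-invariant sub-$\sigma$-algebra $\mathcal B=\pi^{-1}(\mathcal B_Y)$ must be trivial. The paper instead stays on $(Y,\nu)$: it shows directly that $Y$ can have no diffuse part (otherwise one could build a nontrivial $s$-invariant set of measure $<1$), so $Y$ is a finite cyclic orbit $\mathbb Z_n$, and then invokes \emph{mixing} of the Bernoulli shift (which passes to factors) to force $n=1$. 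Your route is a bit more streamlined --- one invocation of ergodicity of $s^n$ on $X$ replaces the atom analysis plus the mixing step --- while the paper's argument has the minor advantage of identifying the intermediate structure (a transitive $\mathbb Z_n$-action) before ruling it out. Your closing remark about the weak-mixing/eigenfunction alternative is essentially the paper's mixing step phrased differently.
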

\begin{proof}
First, we prove that any factor $\mathbb{Z}\curvearrowright (Y,\nu)$ is either essentially free or a transitive map on a finite space, i.e.\ $[\mathbb{Z}\curvearrowright (Y,\nu)]\cong [\mathbb{Z}\curvearrowright \Z_n]$ for some $n\geq 1$. Indeed, assume that $\mathbb{Z}\curvearrowright(Y,\nu)$ is not essentially free. Write $\mathbb{Z}=\langle s\rangle$. 
Then for some $n\geq 1$ we have $\nu(\text{Fix}(s^n))>0$, where $\text{Fix}(s^n)=\{y\in Y: s^ny=y\}$. Note that since $\mathbb{Z}$ is abelian, $\text{Fix}(s^n)$ is $\mathbb{Z}$-invariant. As $\mathbb{Z}\curvearrowright (\{0,1\}^{\mathbb{Z}}, \{\frac{1}{2},\frac{1}{2}\}^{\mathbb{Z}})$ is the Bernoulli shift, it is ergodic. Since ergodicity passes to factors, we have $\nu(\text{Fix}(s^n))=1$, i.e. $s^ny=y$ for $\nu$-a.e.\ $y\in Y$.

Note that $(Y,\nu)$ has no diffuse part. Indeed, otherwise, we could pick any $Z\subset Y$ with $0<\nu(Z)<\frac{1}{n}$. Then $W:=\cup_{i=0}^{n-1}s^iZ$ would be $\mathbb{Z}$-invariant and such that $0<\nu(W)<1$, which would contradict the ergodicity of $\mathbb{Z}\curvearrowright (Y,\nu)$. Pick any atom $\{y_0\}$ in $Y$. Then $Y=\cup_{i=0}^{n-1}\{s^iy_0\}$ and clearly this shows that $\mathbb{Z}\curvearrowright (Y,\nu)$ is measurably conjugate to the transitive action $\mathbb{Z}\curvearrowright\mathbb{Z}_n$.

Second, Bernoulli shifts are mixing, and this property passes to non-trivial factor maps. Thus if $[\mathbb{Z}\curvearrowright (Y,\nu)]\cong [\mathbb{Z}\curvearrowright \mathbb{Z}_n]$, then $n=1$ and the action is trivial.
\end{proof}

The following is the main result of this section.
\begin{theorem}
\label{thm:lamplighter}
Let $G=\mathbb{Z}_2\wr\mathbb{Z}=(\oplus_{\mathbb{Z}}\Z_2)\rtimes \mathbb{Z}$ be the classical lamplighter group. 
Let $\mathcal{P}\subseteq L(G)$ be a $G$-invariant von Neumann subalgebra. Then the following hold true:
\begin{itemize}
\item[(i)] there exists a normal subgroup $N\lhd G$ such that $\mathcal{P}=(\mathcal{P}\cap L(A))\vee L(N)$, i.e.\ $\mathcal{P}$ is generated as a von Neumann algebra by $\mathcal{P}\cap L(A)$ and $L(N)$; 
\item[(ii)] $\mathcal{P}\cap L(\mathbb{Z})=L(k\mathbb{Z})$ for some $k\in\mathbb{Z}$;
\item[(iii)] if the factor map $\pi: (\widehat{A},\text{Haar})\rightarrow (Y,\nu)$ is relatively weakly mixing, where $L^{\infty}(Y,\nu)\cong \mathcal{P}\cap L(A)$, then $\mathcal{P}=L^{\infty}(Y,\nu)\rtimes k\mathbb{Z}$.
\end{itemize}
\end{theorem}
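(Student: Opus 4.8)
The strategy is to analyze a $G$-invariant von Neumann subalgebra $\mathcal{P}\subseteq L(G)$ by looking at its interaction with the two distinguished subalgebras $L(A)$ and $L(\mathbb{Z})$, using the associated $\tau$-preserving conditional expectation $E:L(G)\to\mathcal{P}$ and the $\fpc$-machinery from Section~\ref{sec:prelim}. The first step is to compute $\fpc(g)$ for the various types of elements $g\in G$; writing a general element as $a s^n$ with $a\in A$ and $s$ the generator of $\mathbb{Z}$, one checks that for $n\neq 0$ the centralizer $C_G(as^n)$ is too large (it contains arbitrarily ``spread out'' lamp configurations) to finitely permute most elements, which will force $E(as^n)\in L(A)\cdot s^n$ for the relevant $n$, and for $a\in A\setminus\{e\}$ one gets that $\fpc(a)$ is contained in $A$ since translates of lamps by the $\mathbb{Z}$-action are pairwise distinct. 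From Lemma~\ref{lemma: fpc application} this gives $E(L(A))\subseteq L(A)$, so $\mathcal{Q}:=\mathcal{P}\cap L(A)$ is the image of $E|_{L(A)}$ and is a $\mathbb{Z}$-invariant (hence of the form $L^\infty(Y,\nu)$ for a $\mathbb{Z}$-factor of the Bernoulli shift) von Neumann subalgebra of $L(A)$.

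For part (ii), I would argue that $\mathcal{P}\cap L(\mathbb{Z})$ is a $\mathbb{Z}$-invariant (under the trivial conjugation action, which is automatic) von Neumann subalgebra of $L(\mathbb{Z})\cong L^\infty(\mathbb{T})$; but the adjoint action of the lamp group $A$ on $L(\mathbb{Z})$ is highly nontrivial, and invariance of $\mathcal{P}$ under conjugation by a single lamp $a$ sends $s^n$ to $a\,s^n\,a^{-1} = (a+s^n\cdot a)\,s^n$, which has a nonzero lamp part for $n\neq 0$ unless $a$ is fixed by $s^n$. Pushing this through $E$ and using that $E$ lands in $L(A)\cdot s^n$ on the relevant component, one deduces that the set $\{n\in\mathbb{Z}: E(s^n)=s^n\}$ is a subgroup $k\mathbb{Z}$ and that $E(s^n)=0$ for $n\notin k\mathbb{Z}$ (the intermediate scalar cases being excluded because the character $g\mapsto\tau(g^{-1}E(g))$ restricted to $\mathbb{Z}$ is a positive-definite function whose value is forced to be $0$ or $1$ by a separation argument with lamp conjugations). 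This gives $\mathcal{P}\cap L(\mathbb{Z})=L(k\mathbb{Z})$, and combining with the previous paragraph, Proposition~\ref{prop: E(S) subset S} applied with $\mathcal{A}=L(A)$ and $S = \bigoplus_{n\notin k\mathbb{Z}} L(A)s^n$ lets me conclude $E(h)=0$ for $h\in A s^n$, $n\notin k\mathbb{Z}$, and $E(as^n)\in \mathcal{Q}\,s^n$ for $n\in k\mathbb{Z}$; from this the decomposition $\mathcal{P}=\mathcal{Q}\vee L(N)$ with $N = H\rtimes k\mathbb{Z}$ (where $L(H)=\mathcal{Q}\cap$(a suitable subgroup algebra), or rather where $N$ is the normal subgroup generated appropriately) follows by a Fourier-support bookkeeping argument — this is part (i).

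For part (iii), assume the factor map $\pi:(\widehat{A},\mathrm{Haar})\to(Y,\nu)$ with $L^\infty(Y,\nu)\cong\mathcal{Q}$ is relatively weakly mixing. By part (i) we already know $\mathcal{P}\supseteq\mathcal{Q}$ and $\mathcal{P}\cap L(\mathbb{Z})=L(k\mathbb{Z})$, so $\mathcal{P}\supseteq \mathcal{Q}\rtimes k\mathbb{Z} = L^\infty(Y,\nu)\rtimes k\mathbb{Z}$, and only the reverse inclusion is at issue. Take $x\in\mathcal{P}$; expanding along the copy of $\mathbb{Z}$ inside $G$ we may write $x = \sum_{n} x_n s^n$ with $x_n\in L(A)$, and by part (i)/(ii) the only $n$ that contribute lie in $k\mathbb{Z}$. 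For such $n$, using that $\mathcal{P}$ is invariant, the ``$s^n$-component'' $x_n$ of $x$ transforms under conjugation by the generator $s$ of $\mathbb{Z}$ (which normalizes both $A$ and $\mathcal{P}$) in a controlled way, and the point is to show each spectral piece $x_n$ already lies in $\mathcal{Q}=L^\infty(Y,\nu)$. This is exactly where Lemma~\ref{lem: locate elements using relative weak mixing} enters: after normalizing, the relevant unitary $v\in L^\infty(\widehat{A})$ built from $x_n$ satisfies $\sigma_{s^m}(v)v^*\in L^\infty(Y,\nu)$ for all $m$ — this is a consequence of $\mathcal{P}\cap L(A)=\mathcal{Q}$ together with the commutation relations $s^m x_n s^{-m}\in\mathcal{P}\cap L(A)=\mathcal{Q}$ — so the lemma forces $v\in L^\infty(Y,\nu)$, hence $x_n\in\mathcal{Q}$, and therefore $x\in\mathcal{Q}\rtimes k\mathbb{Z}$.

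\textbf{Main obstacle.} The genuinely delicate point is the $\fpc$ computation and the passage from ``$E(as^n)\in L(A)s^n$ on each component'' to the clean statements (i) and (ii); in particular, ruling out the intermediate possibilities for the character $\chi|_{\mathbb{Z}}$ and correctly identifying the normal subgroup $N$ (which, as the authors warn in Remark~\ref{remark: strange normal subgroups}, need not respect the obvious $A\rtimes\mathbb{Z}$ splitting) requires care. Once (i) and (ii) are in place, part (iii) is a fairly direct application of the relative weak mixing lemma, so I expect the bulk of the work to be in the structural analysis preceding it, with (iii) itself being the short final step.
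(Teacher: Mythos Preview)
Your $\fpc$ computation for $a\in A\setminus\{e\}$ is correct and does give $E(L(A))\subseteq L(A)$. However, the claim that for $n\neq 0$ the centralizer $C_G(as^n)$ ``contains arbitrarily spread out lamp configurations'' is false: one has $C_G(as^n)\cap A=\{0\}$, since any $b\in A$ commuting with $as^n$ must satisfy $\sigma_{s^n}(b)=b$, which for finitely supported $b$ forces $b=0$. Thus $C_G(as^n)$ is a thin subgroup (at most one element over each integer $m$), and a \emph{small} centralizer produces a \emph{large} $\fpc$ set, so Lemma~\ref{lemma: fpc application} gives essentially no control over $E(as^n)$. Your conclusion $E(as^n)\in L(A)\cdot s^n$ therefore does not follow from the $\fpc$ machinery, and this gap propagates: without $E(g)\in gL(A)$ for $g\notin A$, neither the separation/character argument for (ii) nor the Fourier bookkeeping for (i) gets off the ground. (Even for $a=0$, where $\fpc(s^n)=\mathbb{Z}$ does hold, this only gives $E(s^n)\in L(\mathbb{Z})$, not $E(s^n)\in\mathbb{C}s^n$.)

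The paper's route is structurally different and does not use $\fpc$ for this theorem. It first invokes a dichotomy: any $\mathbb{Z}$-factor of the two-sided Bernoulli shift is either trivial or essentially free. The trivial case ($\mathcal{P}\cap L(A)=\mathbb{C}$) is dispatched by a direct character computation showing $\phi\equiv\delta_e$ and hence $\mathcal{P}=\mathbb{C}$. In the essentially free case the key input is the commutant identity $L^\infty(Y,\nu)'\cap L(G)=L(A)$, so that Proposition~\ref{prop: E properties}(3) yields $g^{-1}E(g)\in\mathcal{P}'\cap L(G)\subseteq L(A)$ and hence $E(g)=gp_g$ with $p_g\in L(A)$ for every $g\in G$. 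For $g\notin A$ one then uses that the algebraic action $\langle g\rangle\curvearrowright(\widehat{A},\text{Haar})$ is ergodic (a result on algebraic $\mathbb{Z}$-actions), together with $\sigma_g(p_g)=p_g$, to force $p_g\in\mathbb{C}$ and thus $E(g)\in\{0,g\}$. Parts (i) and (ii) then fall out immediately, and (iii) is deduced from Lemma~\ref{lem: locate elements using relative weak mixing} applied to the unitary $v\in A$ appearing in an element $vs^n\in N$, essentially as you outline. The ergodic-theoretic inputs---the Bernoulli-factor dichotomy and the ergodicity of the cyclic algebraic action---are doing the work that the $\fpc$ calculation cannot do here.
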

\begin{proof}
First, note that $\mathcal{P}\cap L(A)$ is a $G$-invariant von Neumann subalgebra of $$L(A)\cong L^{\infty}\left(\left(\{0,1\}^{\mathbb{Z}}, \{\frac{1}{2},\frac{1}{2}\}^{\mathbb{Z}}\right)\right),$$ and as such it corresponds to a factor of the Bernoulli shift $\mathbb{Z}\curvearrowright (\widehat{A},\text{Haar})\cong (\{0,1\}^{\mathbb{Z}}, \{\frac{1}{2},\frac{1}{2}\}^{\mathbb{Z}})$. Write $L^{\infty}(Y,\nu)=\mathcal{P}\cap L(A)$ and 
consider the corresponding factor map.

As usual set $E: L(G)\twoheadrightarrow \mathcal{P}$ be the trace-preserving conditional expectation onto $\mathcal{P}$ and set $\phi(g):=\tau(g^{-1}E(g))$ for all $g\in G$. 
Set $N=\{g\in G: \phi(g)=1\}$, recalling Proposition \ref{prop: E properties}. 
In particular, $g\in N$ implies $g\in \mathcal{P}$, i.e.\ $L(N)\subseteq \mathcal{P}$. It is also clear that $N$ is a normal subgroup of $G$.

We split the proof into two cases.

\smallskip \textbf{Case 1}. $\mathbb{Z}\curvearrowright (Y, \nu)$ is the trivial action.
\smallskip

In this case, we shall prove that $\mathcal{P}=\mathbb{C}$, $N=\{e\}$ and $k=0$. In fact, we will show that $\phi(g)=0$ for all $e\neq g\in G$. 
First, note that for each $a, b\in A$, we have $bE(a)b^{-1}=E(bab^{-1})=E(b)$ and thus $E(a)\in L(A)'\cap L(G)=L(A)$ (where we use the essential freeness of the action). Hence, actually $E(a)\in L(A)\cap \mathcal{P}=\mathbb{C}$. Thus $E(a)=0$ for all $e\neq a\in A$ and so $\phi(a)=0$. 

Fix now $n\in\mathbb{Z}$ and $e\neq a\in A$. Take any $k\in\mathbb{Z}$ such that $k>\max\{|j-i|: i,j\in\text{supp}(a)\}$. Then note that
$\sigma_{s^{km}}(a)\neq a$ for all $0\neq m\in\mathbb{Z}$. Let $t_m:=s^m(as^n)s^{-m}=\sigma_{s^m}(a)s^n$. Then $t_m^{-1}t_{m'}=s^{-n}\sigma_{s^m}(a)^{-1}\sigma_{s^{m'}}(a)s^n$. Thus $\phi(t_m^{-1}t_{m'})=\phi(\sigma_{s^m}(a)^{-1}\sigma_{s^{m'}}(a))=\phi(a^{-1}\sigma_{s^{m'-m}}(a))=0$ for all $m'\neq m\in \mathbb{Z}$ since $e\neq a^{-1}\sigma_{s^{m'-m}}(a)\in A$. Thus by the well-known lemma on characters, see e.g.\ \cite[Lemma 2.7]{dudko2024character}, we deduce that $\phi(as^n)=\phi(t_m)=0$ for all $m$. 

Finally for any $0\neq n\in\mathbb{Z}$ note that for any mutually distinct elements $\{a_i: i\in \N\}\subset A$, we have $\sigma_{s^n}(a_i^{-1}a_j)\neq a_i^{-1}a_j$ for all $i, j \in \N, i\neq j$. Thus $\phi(s^n)=0$ by a similar argument as above by noticing that $a_is^na_i^{-1}=a_i\sigma_{s^n}(a_i^{-1})s^n$ and $[a_j\sigma_{s^n}(a_j^{-1})s^n][a_i\sigma_{s^n}(a_i^{-1})s^n]^{-1}=(a_i^{-1}a_j)\sigma_{s^n}(a_i^{-1}a_j)^{-1}$. Therefore, $\phi\equiv \delta_e$ and hence $\mathcal{P}=\mathbb{C}$.

\smallskip \textbf{Case 2}. $\mathbb{Z}\curvearrowright (Y,\nu)$ is essentially free.

\smallskip
In this case, we have $L^{\infty}(Y,\nu)'\cap L(G)=L(A)\cong L^{\infty}(\{0,1\}^{\mathbb{Z}}, \{\frac{1}{2},\frac{1}{2}\}^{\mathbb{Z}})$ (see e.g. \cite[Lemma 1.6]{packer}). We briefly review the proof. Pick any $f \in L^{\infty}(Y,\nu)'\cap L(G)$, with the Fourier type decomposition $f=\sum_{n\in\mathbb{Z}}f_ns^n$, where $f_n\in L(A)$, $n \in \Z$. Then $\xi f=f\xi$ for any $\xi\in L^{\infty}(Y,\nu)$ shows that $f_n(\xi-\sigma_{s^n}(\xi))=0$, hence also $P(f_n^*f_n)(\xi-\sigma_{s^n}(\xi))=0$ for all $n\in\mathbb{Z}$, where $P: L(A)\rightarrow L^{\infty}(Y)$ is the (faithful) conditional expectation. Since $\mathbb{Z}\curvearrowright (Y,\nu)$ is essentially free, this implies that $f_n=0$ for all $n\neq 0$. Indeed, assume that for some $n\neq 0$, we have $f_n\neq 0$; equivalently, $P(f_n^*f_n)\neq 0$, i.e.\ there exists some $A\subseteq Y$ such that $\nu(A)>0$ and $P(f_n^*f_n)(x)\neq 0$ for all $x\in A$. Then by \cite[Proposition 2.4]{kerrli_book}, we may find some $B\subseteq A$ with $\nu(B)>0$ and $s^nB\cap B=\emptyset$. Set $\xi=\chi_B$, the characteristic function on $B$, then for a.e.\ $x\in B$, we get $0=P(f_n^*f_n)(x)(\xi-\sigma_{s^n}(\xi))(x)=P(f_n^*f_n)(x)\neq 0$, a contradiction.

Note that for each $g \in G$ we have $g^{-1}E(g)\in L^{\infty}(Y,\nu)'\cap L(G)=L(A)$, so that we may write $E(g)=gp_g$ for some $p_g\in L(A)$  for all $g\in G$. Hence using the bimodule property, we have that $g^nE(g)g^{-n}=E(g)$ and hence $p_g=\sigma_{g^n}(p_g)$ for all $g\in G$. In other words, $p_g$ is invariant under the action of the cyclic group $\langle g\rangle$. Note that for all $g\in G\setminus A$, as an algebraic action $\langle g \rangle\curvearrowright (\widehat{A},\text{Haar})$ is ergodic by \cite[Lemma 1.2 and Theorem 1.6]{schmidt_book} (see also \cite[Theorem 3.1]{lps}) since $\sharp\{g^n\cdot v=\sigma_{g^n}(v): n\in\mathbb{Z}\}=\infty$ for all $0\neq v\in A$. 
Therefore $p_g\in\mathbb{C}$ for all $g\in G\setminus A$.  It now follows that for any $g\in G\setminus A$, $$gp_g=E(g)=E(E(g))=E(gp_g)=E(g)p_g=gp_g^2,$$ so that $p_g=0$ or $1$; in other words $E(g) = g$ or $E(g) =0$. 
This implies that $\mathcal{P}\cap L(\mathbb{Z})=L(N\cap \mathbb{Z})=L(k\mathbb{Z})$ for a certain $k \in \Z$. 

Next, we show that $\mathcal{P}=L^{\infty}(Y, \nu)\vee L(N)$. The inclusion \say{$\supseteq$} clearly holds true since $\mathcal{P}$ contains both $L^{\infty}(Y, \nu)$ and $L(N)$. For the \say{$\subseteq$} direction, note that for any $g\in G$, either $g\in A$ or $g\in G\setminus A$. If $g\in A$, then $E(g)= g p_g\in L(A)$ and thus $E(g)\in \mathcal{P}\cap L(A)=L^{\infty}(Y, \nu)$. If $g\in G\setminus A$, then $E(g)=0$ unless $E(g)=g$ and thus $g\in N$. This shows that $E(g)\in L^{\infty}(Y)\vee L(N)$ for all $g\in G$. Since $\mathcal{P}$ is generated as a von Neumann algebra by $\{E(g): g\in G\}$, we deduce that $\mathcal{P}\subseteq L^{\infty}(Y, \nu)\vee L(N)$. Hence, we have shown that $\mathcal{P}=L^{\infty}(Y, \nu)\vee L(N)$.
\smallskip

We are left to show that (iii) holds true. Notice that $L^{\infty}(Y, \nu)\rtimes k\mathbb{Z}\subseteq \mathcal{P}$. By (i), it suffices to show that for any $v\in A$ and $n\in\mathbb{Z}$, if $vs^n\in N$, then $vs^n\in L^{\infty}(Y, \nu)\rtimes k\mathbb{Z}$.

Subcase 1. $n=0$. Then $v\in N$ implies that $v\in 
\mathcal{P}\cap L(A)=L^{\infty}(Y, \nu)$.

Subcase 2. $v=e$. Then $s^n\in \mathcal{P}\cap L(\mathbb{Z})=L(k\mathbb{Z})$ by (ii).

Subcase 3. $v\neq e$ and $n\neq 0$. 

For any $s^{\ell}\in\mathbb{Z}$, we have $\sigma_{s^{\ell}}(v)s^n=s^{\ell}(vs^n)s^{-\ell}\in \mathcal{P}$. Hence $\sigma_{s^{\ell}}(v)v^{-1}=(\sigma_{s^{\ell}}(v)s^n)(vs^n)^{-1}\in \mathcal{P}\cap L(A)=L^{\infty}(Y, \nu)$. Notice that $v\in A\subset L(A)\cong L^{\infty}(\widehat{A})$ is a unitary, we may apply Lemma \ref{lem: locate elements using relative weak mixing} to conclude that $v\in L^{\infty}(Y, \nu)\subset \mathcal{P}$. Thus $E(v)=v$, i.e.\ $v\in N$. Therefore, $s^n=v^{-1}(vs^n)\in N$ and thus $s^n\in L(N)\cap L(\mathbb{Z})\subset \mathcal{P}\cap L(\mathbb{Z})=L(k\mathbb{Z})$ by (ii). This implies that $k\mid n$, i.e.\ $s^n\in k\mathbb{Z}$, so that finally $vs^n\in L^{\infty}(Y, \nu)\rtimes k\mathbb{Z}$.
\end{proof}

\begin{remark} \label{remark: strange normal subgroups}
We remark that the relatively weakly mixing assumption could not be dropped. Indeed, take any non-trivial normal subgroup $N\lhd G$ such that $N$ is not a semidirect product inside $G$. 
For example, we may take $N$ to be the normal subgroup generated by $\delta_0s$, where $\delta_0\in A= \oplus_{\mathbb{Z}}\frac{\mathbb{Z}}{2\mathbb{Z}}$ takes value $1\in \frac{\mathbb{Z}}{2\mathbb{Z}}$ at coordinate 0 and $0\in \frac{\mathbb{Z}}{2\mathbb{Z}}$ at other coordinates. 
Then $\mathcal{P}:=L(N)\neq L^{\infty}(Y. \nu)\rtimes k\mathbb{Z}$ for any factor $(Y, \nu)$ of $\hat{A}$ and any $k \in \Z$. To see this, first note that $N\neq (N\cap A)\rtimes k\mathbb{Z}$. Indeed, suppose that we do have equality above. Then $k=1$ since $\pi(N)=\mathbb{Z}$, where $\pi: G\rightarrow G/A=\mathbb{Z}$ is the natural surjective homomorphism. Next, note that $N\cap A=\{a\in A: ~\sharp\text{supp}(a)~\text{is even}\}$, and on the other hand $\delta_0=(\delta_0s)s^{-1}\in N\cap A$, which gives us a contradiction. Finally, suppose that $L(N)=L^{\infty}(Y,\nu)\rtimes k\mathbb{Z}$. Then $L(N\cap A)=\mathcal{P}\cap L(A)=L^{\infty}(Y,\nu)$. So $L(N)=L(N\cap A)\rtimes \mathbb{Z}$, equivalently, $N=(N\cap A)\rtimes \mathbb{Z}$, which again is a contradiction. Therefore, according to part (iii), we know that $\mathbb{Z}\curvearrowright \widehat{A}\rightarrow Y$ is not relatively weakly mixing for this example. 
\end{remark}

The theorem above does not make it immediately clear whether the lamplighter group does have the ISR property. Below we show this is not the case.

\begin{proposition}\label{proposition: lamplighter groups do not have ISR}
The group $G=(\oplus_{\mathbb{Z}}\Z_2)\rtimes \mathbb{Z}$ does not have the ISR property. 
\end{proposition}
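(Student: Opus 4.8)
The plan is to produce a single exotic $G$-invariant subalgebra of $L(G)$, coming from a measure-theoretic (i.e.\ non-algebraic) factor of the Bernoulli shift. Write $A=\oplus_{\mathbb{Z}}\mathbb{Z}_2$ and recall $L(A)\cong L^\infty(\widehat A,\mathrm{Haar})$ with $\mathbb{Z}$ acting by the Bernoulli shift $\sigma$. First I would fix a \emph{nontrivial} factor map $\pi\colon(\widehat A,\mathrm{Haar})\to(Y,\nu)$ that is relatively weakly mixing. Such a factor exists by classical ergodic theory: choosing $p$ with $0<H(p)<\log 2$ and then $q$ with $H(q)=\log 2-H(p)$, the system $B(p,1-p)\times B(q,1-q)$ is Bernoulli of entropy $\log 2$, hence conjugate to $\mathbb{Z}\curvearrowright\widehat A$ by Ornstein's isomorphism theorem; its coordinate projection onto $Y:=B(p,1-p)$ is relatively weakly mixing because $B(q,1-q)$ is weakly mixing, and $Y$ is a proper nontrivial $\mathbb{Z}$-factor.

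Next I would set $\mathcal P:=L^\infty(Y,\nu)\subseteq L(A)\subseteq L(G)$ and verify it is a $G$-invariant von Neumann subalgebra. This is immediate: $L(A)$ is abelian, so conjugation by any $u_a$, $a\in A$, fixes $\mathcal P$ pointwise, while conjugation by $u_{s^n}$ acts on $L(A)=L^\infty(\widehat A)$ as the shift automorphism $\sigma_{s^n}$, which preserves $L^\infty(Y,\nu)$ since $Y$ is a $\mathbb{Z}$-factor. This is consistent with Theorem \ref{thm:lamplighter}: here $\mathcal P\cap L(A)=\mathcal P$ and $\mathcal P\cap L(\mathbb{Z})=\mathbb{C}1$, i.e.\ $k=0$, and part (iii) correctly predicts $\mathcal P=L^\infty(Y,\nu)\rtimes 0\mathbb{Z}$.

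Finally I would show $\mathcal P$ is exotic; this is the step that genuinely uses relative weak mixing. Suppose $\mathcal P=L(N)$ for some normal subgroup $N\trianglelefteq G$. Since the only elements of $G$ sitting inside $L(A)$ are those of $A$, from $L(N)=\mathcal P\subseteq L(A)$ we get $N\le A$, and normality forces $N$ to be shift-invariant, i.e.\ an ideal of the group ring $\mathbb{Z}_2[t,t^{-1}]\cong A$. Then $L(N)\cong L^\infty(\widehat A/N^\perp)$ and $\pi$ is conjugate to the quotient map $\widehat A\to\widehat A/N^\perp$, a compact group extension. If $N=\{0\}$ then $\mathcal P=\mathbb{C}1$, excluded since $Y$ is nontrivial; if $N=A$ then $\mathcal P=L(A)$, excluded since $Y$ is a proper factor; and any nonzero proper ideal of $\mathbb{Z}_2[t,t^{-1}]$ has finite index, so $N^\perp$ is a nontrivial \emph{finite} group on which $\mathbb{Z}$ acts by permutations. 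Hence the relative product $\widehat A\times_{\widehat A/N^\perp}\widehat A$ carries a nonconstant $\mathbb{Z}$-invariant function (the $\sigma$-orbit of $\omega_1-\omega_2$ inside $N^\perp$) and is not ergodic, contradicting relative weak mixing of $\pi$. Therefore $\mathcal P$ is not of the form $L(N)$, and $G$ fails the ISR property. The only non-formal ingredient is the existence of a nontrivial relatively weakly mixing Bernoulli factor; everything else is bookkeeping with Theorem \ref{thm:lamplighter} and the Fourier dictionary $L(A)\cong L^\infty(\widehat A)$.
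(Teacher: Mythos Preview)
Your argument is correct and takes a genuinely different route from the paper's own proof.

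The paper argues by counting: Sinai's factor theorem provides, for every $r\in(0,\log 2)$, a $\mathbb{Z}$-factor $(Y,\nu)$ of the Bernoulli shift with entropy $r$, while $G$, being finitely generated metabelian, has only countably many normal subgroups contained in $A$ (each is normally finitely generated). Hence for uncountably many values of $r$ the subalgebra $L^\infty(Y,\nu)$ cannot equal any $L(N)$. No explicit factor is produced, and relative weak mixing plays no role.

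Your approach is instead constructive and structural. You use Ornstein's isomorphism theorem to realise the $(1/2,1/2)$ Bernoulli shift as a direct product of two smaller Bernoulli shifts and take the coordinate projection; the fiber being a nontrivial Bernoulli shift makes the factor relatively weakly mixing. You then observe that any normal subgroup $N\le A$ corresponds to an ideal of the PID $\mathbb{Z}_2[t,t^{-1}]$, so the associated factor $\widehat A\to\widehat A/N^\perp$ is a finite compact group extension, and the equivariant map $(\omega_1,\omega_2)\mapsto \omega_2-\omega_1$ into $N^\perp$ (which fixes $0$) yields a nonconstant invariant function on the relative product, contradicting relative weak mixing. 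This dovetails with part (iii) of Theorem~\ref{thm:lamplighter}, giving a concrete witness to which that clause applies, whereas the paper's counting argument is independent of the theorem. The trade-off is that you invoke Ornstein's isomorphism theorem rather than Sinai's factor theorem, and you do a little more hands-on work with the algebraic structure of $A$; in return you obtain an explicit exotic subalgebra rather than a mere existence statement.
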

\begin{proof}
On the one hand, recall that by Sinai's celebrated theorem in \cite{Sinai}, for any number $r\in (0,log2)$, there exists a $\mathbb{Z}$-factor map $(\widehat{A},\text{Haar})\twoheadrightarrow (Y,\nu)$ such that the entropy $h(\mathbb{Z}\curvearrowright (Y,\nu))=r$. On the other hand, $A=\oplus_{\mathbb{Z}}\Z_2$ contains only countably many $\mathbb{Z}$-invariant subgroups; equivalently, there are only countably many normal subgroups of  $G$ contained in $A$. Indeed, this follows since as $G$ is a finitely generated metabelian group, every  subgroup as above is normally finitely generated  by \cite[Lemma 6.4]{BBDZ}. Therefore, there exists some $r\in (0,\log 2)$ such that $\mathcal{P}:=L^{\infty}(Y,\nu)$ is not of the form $L(B)$ for any subgroup $B\subseteq G$, where $\mathbb{Z}\curvearrowright (Y,\nu)$ is a factor of $\mathbb{Z}\curvearrowright (\widehat{A},\text{Haar})$ with entropy $r$. This shows that $G$ does not have the ISR property. 
\end{proof}

\section{Remarks on invariant subalgebras for the generalised wreath product $S_\infty \ltimes \Z_2^\infty$}
\label{sec:wreath}

In the final section, we consider the apparently simplest semidirect product of the type possible (among the ones considered in this paper). To be precise, we examine $S_\infty \ltimes \Z_2^\infty$, which can also be viewed as the (generalised) wreath product $\mathbb{Z}_2 \wr S_\infty$. Here, of course, $S_\infty$ denotes the group of all finite permutations acting on $\N$, viewed as the index set for the direct product $\Z_2^\infty = \bigoplus_{\N} \Z_2$. Let us then set
\[ G = S_\infty \ltimes \Z_2^\infty. \]

For $n\in\mathbb N$ let $\mathbb Z_2^{(n)}$ be the copy of $\mathbb Z_2$ inside $\mathbb Z_2^\infty\subseteq G$ sitting at the $n$-th coordinate. Further set $$S_n=\{s\in S_\infty: s(i)=i\text{ for all }i>n\},\;\;S_{n\infty}=\{s\in S_\infty:s(i)=i\text{ for all }i\leqslant n\}.$$ It is not hard to see that for $n\neq 2$ the subgroup $S_{n\infty}$ is the centralizer of $S_n$ in $S_\infty$.

Again, let us fix an invariant von Neumann subalgebra $\mathcal{M}\subseteq L(G)$, and denote by $E:L(G) \to \mathcal{M}$ the unique $\tau$-preserving conditional expectation. We will first establish some general properties of the pair $(\mathcal M, E)$.
\begin{proposition}
Let $l \in \N$. Then $E(L(\mathbb Z_2^{(l)}))\subseteq L(\mathbb Z_2^{(l)})$.
\end{proposition}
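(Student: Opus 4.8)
The plan is to reduce the statement to a single computation of $\fpc(v)$, where $v\in\Z_2^\infty$ denotes the generator of $\mathbb Z_2^{(l)}$ (the vector with a $1$ in coordinate $l$ and $0$ elsewhere), and then to invoke Lemma~\ref{lemma: fpc application}. Since $\mathbb Z_2^{(l)}=\{e,v\}$ is a two-element group, $L(\mathbb Z_2^{(l)})$ is the two-dimensional space $\mathbb C 1+\mathbb C u_v$; as $E$ is linear and unital ($E(1)=1$), the inclusion $E(L(\mathbb Z_2^{(l)}))\subseteq L(\mathbb Z_2^{(l)})$ is equivalent to $E(u_v)\in L(\mathbb Z_2^{(l)})$. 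By Lemma~\ref{lemma: fpc application} we have $E(u_v)\in L(\fpc(v))$, so it suffices to prove that $\fpc(v)=\{e,v\}$, whence $L(\fpc(v))=L(\mathbb Z_2^{(l)})$.

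First I would identify the centralizer $C_G(\{v\})$. Writing a general element of $G$ as $\sigma w$ with $\sigma\in S_\infty$, $w\in\Z_2^\infty$, and using that $\Z_2^\infty$ is abelian and normal, one computes $\sigma w\cdot v\cdot(\sigma w)^{-1}=\sigma(v)$, the generator of $\mathbb Z_2^{(\sigma(l))}$. Hence $\sigma w$ centralizes $v$ if and only if $\sigma(l)=l$, so that $C_G(\{v\})=\st_{S_\infty}(l)\ltimes\Z_2^\infty$, where $\st_{S_\infty}(l)=\{\sigma\in S_\infty:\sigma(l)=l\}$. The inclusion $\{e,v\}\subseteq\fpc(v)$ is then immediate, since $tet^{-1}=e$ always, and $tvt^{-1}=v$ for every $t\in C_G(\{v\})$ by definition of the centralizer.

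For the reverse inclusion I would take $h=\sigma w\in G$ with $h\notin\{e,v\}$ and exhibit infinitely many distinct conjugates of $h$ by transpositions lying in $\st_{S_\infty}(l)$. If $\sigma\neq e$, choose $m\in\suppe(\sigma)\setminus\{l\}$ (nonempty, since a nontrivial permutation moves at least two points, at most one of which can be $l$) and set $\tau_k=(m,m_k)$ for pairwise distinct $m_k\in\mathbb N\setminus(\suppe(\sigma)\cup\{l\})$; each $\tau_k$ fixes $l$, hence lies in $C_G(\{v\})$, and $\tau_k\sigma\tau_k^{-1}$ has support $(\suppe(\sigma)\setminus\{m\})\cup\{m_k\}$, so these permutations are pairwise distinct, whence so are the elements $\tau_k h\tau_k^{-1}$, as they have pairwise distinct images under the quotient map $G\to G/\Z_2^\infty\cong S_\infty$. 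If $\sigma=e$, then $h=w$ with $w\notin\{0,v\}$, so $\suppe(w)$ contains some $m\neq l$; with $\tau_k=(m,m_k)\in\st_{S_\infty}(l)\subseteq C_G(\{v\})$ as above, one has $\tau_k w\tau_k^{-1}=\tau_k(w)$ with support $(\suppe(w)\setminus\{m\})\cup\{m_k\}$, again pairwise distinct. In either case $h\notin\fpc(v)$, so $\fpc(v)=\{e,v\}$ and the proposition follows. The only point requiring any care is ensuring that the coordinate $m$ to be moved can always be chosen different from $l$ in the case analysis, but this presents no genuine obstacle.
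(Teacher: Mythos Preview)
Your proof is correct and follows essentially the same approach as the paper: compute the centralizer of the nontrivial element of $\mathbb Z_2^{(l)}$, deduce that its $\fpc$ is exactly $\{e,v\}=\mathbb Z_2^{(l)}$, and invoke Lemma~\ref{lemma: fpc application}. The only differences are cosmetic: the paper first reduces to $l=1$ via the equivariance in Proposition~\ref{prop: E properties}(2) and simply asserts $\fpc(z)=\mathbb Z_2^{(1)}$ without the explicit case analysis you provide.
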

\begin{proof}
	Proposition \ref{prop: E properties} (2) implies that it is sufficient to consider the case $l=1$. For any $z\in \mathbb Z_2^{(1)}\setminus \{e\}$ one has $C_G(z)=\langle S_1(\infty),Z_2^\infty\rangle$. It follows that $\fpc(z)=\mathbb Z_2^{(1)}$. Applying Lemma \ref{lemma: fpc application} finishes the proof.
\end{proof}
For $k \in \N$ and $z_1,\ldots, z_k\in \mathbb Z_2$ we will denote by $(z_1,\ldots,z_k)$ the element $z_1^{(1)}z_2^{(2)}\cdots z_k^{(k)}\in\mathbb Z_2^\infty$. These should not be confused with permutations (which are written without commas).
\begin{proposition}\label{prop:E(L(Z_m^2))}
	One has $E(L(\mathbb Z_2^2))\subseteq L(\mathbb Z_2^2)$.
\end{proposition}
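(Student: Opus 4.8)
The goal is to show $E(L(\mathbb{Z}_2^2)) \subseteq L(\mathbb{Z}_2^2)$, where $\mathbb{Z}_2^2 = \mathbb{Z}_2^{(1)}\mathbb{Z}_2^{(2)} \subseteq \mathbb{Z}_2^\infty \subseteq G$. The natural framework is Proposition \ref{prop: E(S) subset S}: I would like to take $\mathcal{A} = L(\mathbb{Z}_2^{(1)})$ (or perhaps $L(\mathbb{Z}_2^{(1)}\mathbb{Z}_2^{(2)})$ itself built up in stages) and realize $L(\mathbb{Z}_2^2)$ as a sum $S + \mathcal{A}$ for a suitable trace-zero subspace $S$; but the cleanest route is probably to handle the four group elements $e$, $(1,0)$, $(0,1)$, $(1,1)$ directly via the $\fpc$ computation together with an argument excluding "leakage" into elements outside $\mathbb{Z}_2^2$.

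First I would compute the relevant centralizers. For $z = (1,1) = z_1^{(1)}z_2^{(2)}$, the centralizer $C_G(z)$ consists of all $sv$ with $v \in \mathbb{Z}_2^\infty$ and $s \in S_\infty$ fixing the set $\{1,2\}$ (i.e.\ either fixing both $1$ and $2$ or swapping them); in particular it contains the transposition $(12)$ and all of $\mathbb{Z}_2^\infty$, together with $S_{2\infty}$. The key point is that $\fpc((1,1))$ should then be contained in $\mathbb{Z}_2^{(1)}\mathbb{Z}_2^{(2)} = \mathbb{Z}_2^2$: given any $sv \notin \mathbb{Z}_2^2$, I would produce a sequence $h_n \in C_G(z)$ — built from transpositions $(k\, k')$ with $k,k' > 2$, or from generators $e_k^{(\cdot)}$ of $\mathbb{Z}_2^\infty$ supported away from $\{1,2\}$ — conjugating $sv$ to infinitely many distinct elements. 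Concretely, if $v$ has a nonzero coordinate beyond position $2$, conjugating by $\mathbb{Z}_2^{(k)}$'s and shifting via $S_{2\infty}$ spreads it; if $s$ moves some index $> 2$, conjugating by appropriate transpositions in $S_{2\infty}$ does the same, exactly in the style of the earlier $\fpc$ proofs (Lemma \ref{lemma: fpc widetilde f_A}, Proposition \ref{prop: fpc of involution in S(2^infty)}). This gives $\fpc((1,1)) \subseteq \mathbb{Z}_2^2$, and similarly — using the already established $E(L(\mathbb{Z}_2^{(l)})) \subseteq L(\mathbb{Z}_2^{(l)})$ for the pure elements $(1,0)$ and $(0,1)$ — I get $E(z) \in L(\mathbb{Z}_2^2)$ for each of the four group elements spanning $L(\mathbb{Z}_2^2)$.

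Since $L(\mathbb{Z}_2^2)$ is finite-dimensional (spanned by $u_e, u_{(1,0)}, u_{(0,1)}, u_{(1,1)}$), establishing $E(u_z) \in L(\mathbb{Z}_2^2)$ for these four elements suffices by linearity and normality of $E$. Thus the proof reduces entirely to the $\fpc$ computations, and the main obstacle is the careful bookkeeping in showing $\fpc((1,1)) \subseteq \mathbb{Z}_2^2$: one must check that for \emph{every} $g = sv \in G \setminus \mathbb{Z}_2^2$ there is a sequence in $C_G(z)$ spreading it, splitting into the cases (a) $s$ nontrivial on $\{3,4,\dots\}$, (b) $s$ supported in $\{1,2\}$ but $v$ has support meeting $\{3,4,\dots\}$, and (c) $s$ supported in $\{1,2\}$, $v$ supported in $\{1,2\}$, but $sv \notin \mathbb{Z}_2^2$, i.e.\ $s = (12)$ is nontrivial — here one uses that $(12)\notin \fpc((1,1))$ since conjugating $(12)v$ by $\mathbb{Z}_2^{(k)}$ for $k>2$... actually $(12)$ commutes with those; instead one conjugates by transpositions $(1\,k)(2\,k')$ with $k,k'>2$ which lie in $C_G((1,1))$ and move $(12)$ to $(k\,k')$, producing infinitely many distinct images. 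Once these cases are dispatched, Lemma \ref{lemma: fpc application} closes the argument. I expect case (c) to be the one requiring the most care, since the obvious spreading elements ($\mathbb{Z}_2^{(k)}$, $k>2$) commute with $(12)$ and hence do nothing.
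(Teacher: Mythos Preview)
Your approach has a genuine gap in case (c). The element $(1\,k)(2\,k')$ with $k,k'>2$ does \emph{not} lie in $C_G((1,1))$: it sends $1\mapsto k$ and $2\mapsto k'$, so it moves $(1,1)=1^{(1)}1^{(2)}$ to $1^{(k)}1^{(k')}$. In fact the centralizer $C_G((1,1))$ is exactly $\langle (12),\,S_{2\infty},\,\mathbb{Z}_2^\infty\rangle$, and conjugating $(12)$ by any of these pieces gives back either $(12)$ or $(12)\cdot(1,1)$: elements of $S_{2\infty}$ and $(12)$ itself fix $(12)$, while conjugation by $v\in\mathbb{Z}_2^\infty$ yields $(12)\cdot\bigl((12)(v)-v\bigr)$, and $(12)(v)-v$ is always either $0$ or $(1,1)$. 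Hence $(12)\in\fpc((1,1))$, and your claimed inclusion $\fpc((1,1))\subseteq\mathbb{Z}_2^2$ is false.

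The paper's proof confronts this directly: it only asserts $\fpc(z)\subseteq\langle\mathbb{Z}_2^2,(12)\rangle$, so Lemma~\ref{lemma: fpc application} gives $E(z)=(12)A+B$ with $A,B\in L(\mathbb{Z}_2^2)$, and the real work is killing the $(12)A$ term. This is done by first exploiting that $E(z)$ commutes with $\mathbb{Z}_2^2$ to constrain the Fourier coefficients of $A$, and then comparing $E(z)$ with its conjugate $(123)E(z^{-1})(123)^{-1}=E((0,-z_1,-z_2))$; the two must commute (both lie in $\mathcal{M}'$ after translation), and extracting the coefficient at $(123)$ in the product forces $A\widetilde A=0$, which unwinds to $A=0$. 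Your $\fpc$ calculation handles cases (a) and (b) correctly, but case (c) needs this additional commutation argument rather than a spreading argument.
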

\begin{proof}
	Let $z_1,z_2\in\mathbb Z_2$ and $z=(z_1,z_2)\in\mathbb Z_2^2<\mathbb Z_m^\infty$. Clearly, $\fpc(z)\subseteq\langle \mathbb Z_m^2,(12)\rangle$. Using Lemma \ref{lemma: fpc application} we deduce that $E(z)=(12)A+B$ for some $A,B\in L(\mathbb Z_2^2)$.  Observe that $E(z)$ commutes with $\mathbb Z_2^2$. In particular, $1^{(1)}(12)A=(12)A1^{(1)}$, and so $A1^{(1)}=A1^{(2)}$. Writing $A=\sum\limits_{i,j\in\mathbb Z_2}a_{ij}i^{(1)}j^{(2)}$ we obtain that $a_{(i-1)j}=a_{i(j-1)}$ for all $i,j=0,1$. Therefore, there exist numbers $c_0, c_1\in\mathbb C$ such that $a_{ij}=c_{i+j}$ for all $i,j\in\mathbb Z_2$.
	
	Further, $$E((0,-z_1,-z_2))=(123) E((-z_1,-z_2))(123)^{-1}=
	(123) (A^* (12) + B^*) (123)^{-1} =
	\widetilde A   (23)+\widetilde B,$$
	where $\widetilde A=(123)A^*(123)^{-1},\;\widetilde B=(123)B^*(123)^{-1}$. 
	
	Since $E((0,-z_1,-z_2))$ commutes with $E((z_1,z_2)) \in L(\mathbb{Z}_2^2)$, we have:
	$$((12)A+B)(\widetilde A (23)+\widetilde B)=((23)\widetilde A+\widetilde B)((12)A+B).$$
	Considering the coefficient in the above equation at $(123)=(12)(23)$ 
	(which appears only on the left) we obtain that $A \widetilde A =0$. This can be written as:
	$$\sum\limits_{i\in\mathbb Z_2}c_i\sum\limits_{k\in\mathbb Z_2}(k,i-k,0)\cdot \sum\limits_{j\in\mathbb Z_2}\overline{c_{-j}}\sum\limits_{l\in\mathbb Z_2}(0,j-l,l)=0.$$
	In the above formula the coefficient at $(0,0,0)$ corresponds to $k=l=0,i+j=0$ and gives: $\sum_{i \in \Z_2} |c_i|^2=0$. This implies that $c_0=c_1=0$ and thus $E(z)=B\in L(\mathbb Z_2^2)$.
\end{proof}

We will now consider the action of $E$ on permutations, beginning with the basic transposition.
\begin{proposition}\label{prop: E(12)}
	One has $E((12))=(12)P$ for some self-adjoint element $P \in  L(\mathbb Z_2^{(1)}\times \mathbb Z_2^{(2)})$ such that $E(P)=P^2.$ 
\end{proposition}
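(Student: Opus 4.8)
The plan is to locate $E((12))$ through the set $\fpc((12))$, show it must lie in the small finite--dimensional algebra $L(\Gamma)$ where $\Gamma:=\langle(12),\mathbb Z_2^{(1)},\mathbb Z_2^{(2)}\rangle$, then peel off the permutation $(12)$ using Proposition \ref{prop: E(S) subset S}; the self-adjointness of $P$ and the identity $E(P)=P^2$ will then drop out of the elementary properties of the conditional expectation recorded in Proposition \ref{prop: E properties}.

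The first and main step is a combinatorial computation showing $\fpc((12)v)\subseteq\Gamma$ for \emph{every} $v\in\mathbb Z_2^{(1)}\times\mathbb Z_2^{(2)}$. For this, note that $C_G((12)v)$ contains all transpositions $(j\,k)$ with $j,k\geqslant 3$, since these commute with $(12)$ and fix $v$. Given an arbitrary element $h'=s'w'\in G$ with $s'\in S_\infty$, $w'\in\mathbb Z_2^\infty$: if $s'$ moves some index $j_0\geqslant 3$, then conjugating $h'$ by $(j_0\,k)$ for the infinitely many $k\geqslant 3$ lying outside a fixed finite set produces permutations $(j_0\,k)s'(j_0\,k)$ with pairwise distinct supports, hence pairwise distinct conjugates of $h'$, so $h'\notin\fpc((12)v)$; thus for $h'\in\fpc((12)v)$ we must have $s'\in\{e,(12)\}$. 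Similarly, if $w'$ has a nonzero coordinate at some $j_0\geqslant 3$, then conjugating by $(j_0\,k)$ for the infinitely many $k\geqslant 3$ with $w'_k=0$ fixes $s'$ and moves that coordinate, again producing infinitely many distinct conjugates. Hence $w'\in\mathbb Z_2^{(1)}\times\mathbb Z_2^{(2)}$, and therefore $\fpc((12)v)\subseteq\Gamma$.

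By Lemma \ref{lemma: fpc application} this gives $E((12)v)\in L(\Gamma)=\mathcal A+(12)\mathcal A$ for every such $v$, where $\mathcal A:=L(\mathbb Z_2^{(1)}\times\mathbb Z_2^{(2)})$ is four-dimensional. Setting $\mathcal S:=(12)\mathcal A$ (also four-dimensional, hence closed) and using linearity of $E$, we get $E(\mathcal S)\subseteq\mathcal S+\mathcal A$. Moreover $E(\mathcal A)\subseteq\mathcal A$ by Proposition \ref{prop:E(L(Z_m^2))}, and $\tau$ vanishes on $\mathcal S\mathcal A=\mathcal S$ since every spanning element $(12)v$ of $\mathcal S$ satisfies $(12)v\neq e$. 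Proposition \ref{prop: E(S) subset S} then yields $E(\mathcal S)\subseteq\mathcal S$; in particular $E((12))=(12)P$ for some $P\in\mathcal A=L(\mathbb Z_2^{(1)}\times\mathbb Z_2^{(2)})$.

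It remains to verify the two properties of $P$. Applying Proposition \ref{prop: E properties} (2) with $s=g=(12)$ gives $(12)E((12))(12)^{-1}=E((12))$, i.e.\ $(12)P=P(12)$; and since $(12)$ is a self-adjoint unitary and $E$ is $*$-preserving, $P^*(12)=((12)P)^*=E((12))^*=E((12))=(12)P=P(12)$, whence $P^*=P$. For the last claim, observe that inside $L(G)$ one has $(12)\,E((12))=(12)(12)P=P$; since $E((12))\in\mathcal M$, the bimodule property of $E$ gives
\[ E(P)=E\big((12)\,E((12))\big)=E((12))\,E((12))=(12)P\,(12)P=P(12)(12)P=P^2, \]
using $(12)P=P(12)$ and $(12)^2=e$. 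The only genuinely non-routine part of the argument is the computation of $\fpc((12)v)$; everything else is formal manipulation with $E$ and $\tau$.
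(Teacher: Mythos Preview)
Your proof is correct and follows essentially the same route as the paper: compute $\fpc((12)v)\subseteq\langle(12),\mathbb Z_2^{(1)},\mathbb Z_2^{(2)}\rangle$, apply Proposition \ref{prop: E(S) subset S} with $\mathcal A=L(\mathbb Z_2^2)$ and $\mathcal S=(12)\mathcal A$ to get $E((12))=(12)P$, then derive $P=P^*$ and $E(P)=P^2$ from the basic properties of $E$. The only difference is that you spell out the $\fpc$ computation explicitly via conjugation by transpositions $(j_0\,k)$, whereas the paper simply invokes $C_G((12)z)\supseteq S_{2\infty}$ and states the conclusion.
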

\begin{proof} Let $\mathcal A=L(\mathbb Z_2^2)$ and $\mathcal S=(12)\mathcal A$. By Proposition \ref{prop:E(L(Z_m^2))} one has $E(\mathcal A)\subseteq\mathcal A$. Clearly, $\tau$ is zero on $\mathcal S$. For any $z\in \mathbb Z_m^2$
	one has $C_G((12)z)\supseteq S_{2\;\infty}.$ Therefore, $\fpc((12)z)\subseteq \langle \mathbb Z_m^{(1)}\times \mathbb Z_m^{(2)},(12)\rangle$. By Lemma \ref{lemma: fpc application}, one has $E((12)z)\in\mathcal S+\mathcal A$. Thus, the conditions of Proposition \ref{prop: E(S) subset S} are satisfied. Therefore, $E((12)L(\mathbb Z_2^2))\subseteq (12)L(\mathbb Z_2^2)$. In particular, $E((12))=(12)P$ for some $P\in L(\mathbb Z_2^2)$.
	
	Further, since $E((12))$ is self-adjoint, using Proposition \ref{prop: E properties}, we obtain:
	$$P(12)=(12)E((12))(12)=E((12))=E((12))^*=P^*(12).$$ Therefore, $P=P^*$. Now, $(12)E((12))=P$. Applying $E$ on both sides, and taking into account that $P=P^*$ commutes with $(12)$, we get:
	\[P^2=(12)P(12)P=E((12))^2=E((12)E((12)))=E(P).\] 
	\end{proof}

\begin{remark} In fact, by induction  we obtain:
	\[E(P^{2k-1})=E(P^{2k})=P^{2k}, \;\;\; k \in \N.\] Observe also that $P$ can be viewed as an operator of multiplication by a function from $L^\infty(\widehat \Z_2^2)$.\end{remark}

Note that if $\mathcal{M} = L(N)$ for a normal subgroup $N \trianglelefteq G$, then  $P$ must be either $0$ or $1$. We will however now present two examples of invariant subalgebras in which first $P$ is a non-trivial projection, and then $P=0$, but the algebra $\mathcal {M}$ does not arise from a normal subgroup.

\medskip

Fix a non-trivial orthogonal projection $Q\in L(\mathbb Z_2)$. For $n\in\mathbb N$ let $Q^{(n)}$ be the isomorphic copy of $Q$ inside $L(\mathbb Z_2^{(n)})$. For a subset $A\subset\mathbb N$ set $Q^A=\prod\limits_{n\in A}Q^{(n)}$. Let $\mathcal M_Q$ be the von Neumann subalgebra of $L(G)$ generated by the operators from the set \begin{equation}\label{equation: definition of M_P}
	\{s\cdot P^{\suppg(s)}:s\in S_\infty\}\cup L(\mathbb Z_2^\infty).\end{equation}
\begin{proposition}
	The algebra $\mathcal M_Q$ is an exotic invariant subalgebra of $L(G)$.   
\end{proposition}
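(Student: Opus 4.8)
The plan is to mimic closely the structure of the proof of Lemma~\ref{prop: extra subalgebra of L(H)} (the analogous statement for $GL(\infty,F_2)\ltimes F_2^\infty$), since the present situation is a simpler special case: here the relevant projections $P^{\suppg(s)}$ are honest tensor products of a fixed projection $Q\in L(\Z_2)$ over the (finite) support of a permutation, and $S_\infty$ plays the role of $GL(\infty,F_2)$. First I would record the elementary commutation relations: for $s,t\in S_\infty$ one has $Q^A Q^B = Q^B Q^A$ and $s\cdot Q^A \cdot s^{-1} = Q^{s(A)}$, and in particular $Q^{\suppg(s)} u_t = u_t Q^{\suppg(t^{-1}st)}$; moreover $\suppg(st)\subseteq \suppg(s)\cup\suppg(t)$, so $Q^{\suppg(s)}Q^{\suppg(t)}\geqslant Q^{\suppg(st)}$, with equality of the supporting sets failing only through cancellation of coordinates that do not matter after multiplying by $Q^{\suppg(st)}$. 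These are the exact analogues of \eqref{eq: f_g commutation}.

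Next I would show that the linear span $\mathcal S$ of elements of the form $u_s\, Q^{\suppg(s)} u_v$, $s\in S_\infty$, $v\in\Z_2^\infty$, is a $*$-closed subalgebra, so that it is weak-$*$ dense in $\mathcal M_Q$. The product computation is
\[
u_s Q^{\suppg(s)} u_v \cdot u_t Q^{\suppg(t)} u_w = u_{st}\, Q^{\suppg(t^{-1}st)}Q^{\suppg(t)}\, u_{t^{-1}(v)+w},
\]
and one checks $Q^{\suppg(t^{-1}st)}Q^{\suppg(t)} = Q^{\suppg(st)}\cdot Q^{\suppg(t^{-1}st)}Q^{\suppg(t)}$ by the support containments above, so the product again lies in $\mathcal S$; the $*$-closure is immediate from $(u_sQ^{\suppg(s)}u_v)^* = u_{-v}\,u_{s^{-1}}Q^{\suppg(s^{-1})}$ together with $Q=Q^*$. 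Conjugation by $u_h$ for $h\in S_\infty$ sends $u_sQ^{\suppg(s)}u_v$ to $u_{hsh^{-1}}Q^{\suppg(hsh^{-1})}u_{h(v)}$, and conjugation by $u_w$, $w\in\Z_2^\infty$, fixes $s$ and shifts $v$; hence $\mathcal S$, and therefore $\mathcal M_Q$, is $G$-invariant.

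Finally I would argue $\mathcal M_Q$ is not of the form $L(N)$. Since for every transposition $s=(12)$ the algebra $\mathcal M_Q$ contains a nonzero element $u_sQ^{\{1,2\}}$ of the form $u_sA$ with $A\in L(\Z_2^\infty)$, and it contains all of $L(\Z_2^\infty)$, if $\mathcal M_Q=L(N)$ then $N$ must contain $S_\infty$ (using simplicity, one transposition suffices) and all of $\Z_2^\infty$, forcing $N=G$ and $\mathcal M_Q=L(G)$. To rule this out, fix $Q$ a nontrivial projection in $L(\Z_2)$, so $Q=\tfrac12(1+\lambda u_{z})$ with $z$ the nontrivial element of $\Z_2$ and $\lambda\in\{1,-1\}$; take $s=(12)$, write $v_0=0$, $v_1=z^{(1)}$, so $Q^{\{1,2\}} = \tfrac14(1+\lambda u_{z^{(1)}})(1+\lambda u_{z^{(2)}})$, and test the linear functional $x\mapsto \tau\big(u_s(1-\lambda u_{z^{(1)}})\, x\big)$ against $\mathcal S$: it vanishes on every $u_tQ^{\suppg(t)}u_w$ with $t\neq s$ for trivial support/word-length reasons, while for $t=s$ the factor $(1-\lambda u_{z^{(1)}})$ annihilates $(1+\lambda u_{z^{(1)}})$, giving $0$ as well; hence the functional is zero on $\mathcal M_Q$ but visibly nonzero on $L(G)$ (it is nonzero on $u_s$). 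Therefore $\mathcal M_Q\neq L(G)$, so $\mathcal M_Q$ is an exotic invariant subalgebra. The main obstacle is purely bookkeeping: keeping the support-set containments and the cancellation rules for the coordinates straight throughout the product computation, exactly as in Lemma~\ref{prop: extra subalgebra of L(H)}; there is no conceptual difficulty beyond that.
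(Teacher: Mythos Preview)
Your proposal is correct and follows essentially the same route as the paper. Both proofs verify that the linear span of $u_sQ^{\suppg(s)}u_v$ is a $*$-algebra using the support containment $\suppg(st)\subseteq t^{-1}\suppg(s)\cup\suppg(t)$, check $G$-invariance directly, and then exhibit a nonzero element of $(12)L(\Z_2^2)$ orthogonal to $\mathcal M_Q$ to conclude it is exotic; the paper uses $(12)(Q^\perp)^{\{1,2\}}$ where you use $(12)(Q^\perp)^{(1)}$, but the computation is the same (note that after commuting past $u_s$ your factor becomes $(1-\lambda u_{z^{(2)}})$, which kills the $(1+\lambda u_{z^{(2)}})$ part of $Q^{\{1,2\}}$---your phrasing ``annihilates $(1+\lambda u_{z^{(1)}})$'' is slightly imprecise but the conclusion is right).
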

\begin{proof}
Since $\mathcal M_Q \supseteq L(\mathbb Z_2^\infty)$, it is invariant under conjugation by elements of $\mathbb{Z}_2^\infty$. Observe that $L(\mathbb Z_2^\infty)$ is clearly invariant under conjugation by elements of $S_\infty$. Moreover, for any $g,s\in S_\infty$ one has 
$$g(s\cdot Q^{\suppg(s)})g^{-1}=gsg^{-1}\cdot Q^{g\cdot\suppg(s)}=gsg^{-1}\cdot Q^{\suppg(gsg^{-1})}.$$ 
It follows that $\mathcal M_Q$ is $G$-invariant.

Observe that for any $s,g\in S_\infty$ one has: 
$$sQ^{\suppg(s)}gQ^{\suppg(g)}= sgQ^{g^{-1}(\suppg(s))\cup \suppg(g)}=sgQ^{\suppg(sg)}C$$ 
for some $C\in L(\mathbb{Z}_2^\infty)$, since $g^{-1}(\suppg(s))\cup \suppg(g)\supseteq \suppg(sg)$. As in addition we have $(sQ^{\suppg(s)})^* = Q^{\suppg(s)} s^{-1} = s^{-1} Q^{s^{-1} \suppg(s)} = s^{-1} Q^{\suppg(s)} = s^{-1} Q^{ \suppg(s^{-1})}$, 
it follows that linear combinations of elements of the form $sQ^{\suppg(s)}a,s\in S_\infty,a\in \mathbb Z_2^\infty$, are dense in $\mathcal M_Q$.
	
Set $\tilde{Q} = Q^{\{1,2\}}$. Consider the element $A=(12)\cdot \tilde{Q}^\perp\in L(G)$. We shall prove that $\tau(A^*B)=0$ for any $B\in \mathcal M_Q$.  For any $s\in S_\infty,a\in \mathbb Z_2^\infty$ one has:
$$\tau(A^*sQ^{\suppg(s)}a)=\tau(aQ^{\suppg(s)}\tilde{Q}^{\perp})(12)s)=0.$$ 
Indeed, if $s=(12)$ then $Q^{\suppg(s)}\tilde{Q}^{\perp}=0$. Otherwise, $aQ^{\suppg(s)}\tilde{Q}^{\perp}(12)s$ is a linear combination of the elements of the form $b(12)s,$ $b\in \mathbb Z_2^\infty$, which are not equal to the group unit and thus have zero trace.

This implies that in the decomposition 
\[ (12) = (12) \tilde{Q} + (12) \tilde{Q}^\perp\]
the first element belongs to $\mathcal{M}_Q$, and the second is orthogonal to $\mathcal{M}_Q$. Thus
$E((12)) = (12) \tilde{Q}$ and $\mathcal{M}_Q$ cannot come from a normal subgroup of $G$.
\end{proof}	

Let  now $\mathrm{Part}$ denote the collection of all partitions of $\mathbb N$ into finite sets having only finitely many parts with more than one element. 

\begin{proposition}\label{prop: example P not projection} Let $P_1\in L(\mathbb Z_2)$ be a non-zero  orthogonal projection in $L(\mathbb Z_2)$, $P_2:=P_1^\perp$. Let $\mathcal C\subseteq L(G)$ be the collection of operators of the form $$s\cdot\prod\limits_{K\in\mathcal K}(P_1^K+\sign(s|_K)\cdot P_2^K),\;\;\text{where}\;\;s\in S_\infty,\mathcal K\in\mathrm{Part}, s(K)=K\;\;\text{for all}\;\;K\in\mathcal K.$$ 
Then $\mathcal{M}_{Part}:=\overline{\mathrm{Lin}(\mathcal C)}^{w^*}$ is an exotic invariant von Neumann subalgebra of $L(G)$. 
\end{proposition}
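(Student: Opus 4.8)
The plan is to show, in order, that $\mathcal M_{\mathrm{Part}}$ is (a) a von Neumann algebra, (b) $G$-invariant, and (c) not equal to $L(N')$ for any normal $N'\trianglelefteq G$. Throughout write $\sigma_s$ for the coordinate action of $s\in S_\infty$ on $\mathbb Z_2^\infty$ (so $u_su_vu_s^{-1}=u_{\sigma_s(v)}$), and for $s\in S_\infty$ and an $s$-invariant $\mathcal K\in\mathrm{Part}$ put $E_s^{\mathcal K}:=\prod_{K\in\mathcal K}\bigl(P_1^K+\sign(s|_K)P_2^K\bigr)\in L(\mathbb Z_2^\infty)$, with $P_i^K=\prod_{k\in K}P_i^{(k)}$ and $P_2:=P_1^\perp$; this is a finite product since singleton blocks contribute $\id$, and $\mathcal C=\{u_sE_s^{\mathcal K}\}$. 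We may assume $P_1\notin\{0,\id\}$, for otherwise $E_s^{\mathcal K}$ is a unimodular scalar and $\mathcal M_{\mathrm{Part}}=L(S_\infty)$, which is already exotic since $S_\infty$ is not normal in $G$ (conjugating $u_s$, $s\neq e$, by a suitable $u_v$ gives $u_{v+\sigma_s(v)}u_s\notin L(S_\infty)$). So $P_1$ is a minimal projection of $L(\mathbb Z_2)$; identifying $L(\mathbb Z_2^\infty)$ with $L^\infty$ of the configuration space $\{0,1\}^{\mathbb N}$, the operator $P_1^K+P_2^K$ is the indicator of ``constant on $K$'' while $P_1^K-P_2^K$ is that indicator times the $(\pm1)$-valued common value on $K$.

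For (a) it suffices to check that $\mathcal C$ is a unital, self-adjoint, multiplicatively closed subset of $L(G)$: then $\mathrm{Lin}(\mathcal C)$ is a unital $*$-algebra and $\mathcal M_{\mathrm{Part}}$ its weak-$*$ closure. The unit is $E_e^{\mathcal K}$ for $\mathcal K$ the singleton partition. Since $\sigma_s$ fixes $E_s^{\mathcal K}$ (each block is $s$-invariant) and $\sign(s^{-1}|_K)=\sign(s|_K)$, conjugation by $u_s$ fixes $E_s^{\mathcal K}$, so $(u_sE_s^{\mathcal K})^*=u_s^{-1}E_s^{\mathcal K}=u_{s^{-1}}E_{s^{-1}}^{\mathcal K}\in\mathcal C$. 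For the product, $u_t^{-1}E_s^{\mathcal K}u_t=E_{t^{-1}st}^{\,t^{-1}\mathcal K}$ gives $(u_sE_s^{\mathcal K})(u_tE_t^{\mathcal L})=u_{st}\cdot E_{t^{-1}st}^{\,t^{-1}\mathcal K}E_t^{\mathcal L}$, and the crux is the identity
\[
E_{t^{-1}st}^{\,t^{-1}\mathcal K}\,E_t^{\mathcal L}=E_{st}^{\,(t^{-1}\mathcal K)\vee\mathcal L},
\]
where $\vee$ denotes the join of partitions. To see this note that the product is supported on the configurations that are constant on each block $M$ of $(t^{-1}\mathcal K)\vee\mathcal L$, and on that set the sign accumulated at $M$ equals $\sign\bigl((t^{-1}st)|_M\bigr)\sign\bigl(t|_M\bigr)=\sign\bigl((st)|_M\bigr)$, using that $\sign$ is a homomorphism on $\mathrm{Sym}(M)$ and that $st=t\cdot(t^{-1}st)$ restricts to $M$ (every block $M$ of the join, being a union of $t^{-1}\mathcal K$-blocks and a union of $\mathcal L$-blocks, is invariant under $t^{-1}st$, under $t$, hence under $st$). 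As $(t^{-1}\mathcal K)\vee\mathcal L\in\mathrm{Part}$ has all blocks $st$-invariant, the product lies in $\mathcal C$.

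Step (b) is quick: conjugation by $u_g$, $g\in S_\infty$, sends $u_sE_s^{\mathcal K}$ to $u_{gsg^{-1}}E_{gsg^{-1}}^{\,g\mathcal K}\in\mathcal C$; conjugation by $u_v$, $v\in\mathbb Z_2^\infty$, fixes $u_sE_s^{\mathcal K}$, because $u_vu_sE_s^{\mathcal K}u_v^{-1}=u_su_wE_s^{\mathcal K}$ with $w=\sigma_{s^{-1}}(v)+v$, which has even weight on each $\langle s\rangle$-orbit (telescoping) hence on each $K\in\mathcal K$, and multiplication by an even-weight vector supported in $K$ fixes both $P_1^K\pm P_2^K$. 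So $\mathcal C$ is stable under conjugation by $G$ and $\mathcal M_{\mathrm{Part}}$ is a $G$-invariant von Neumann subalgebra. For (c), introduce the weight-parity homomorphism $\varepsilon:\mathbb Z_2^\infty\to\mathbb Z_2$ and $\psi:G\to\mathbb Z_2$, $\psi(u_su_v)=\sign(s)+\varepsilon(v)$ (viewing $\sign$ as $\mathbb Z_2$-valued; $\psi$ is a homomorphism since $\varepsilon\circ\sigma_s=\varepsilon$), and set $N=\ker\psi\trianglelefteq G$, of index $2$. If $u_v$ appears in $E_s^{\mathcal K}$ then in every factor its block-restriction has the parity dictated by $\sign(s|_K)$, whence $\varepsilon(v)=\sign(s)$ in $\mathbb Z_2$; so $\suppe(u_sE_s^{\mathcal K})\subseteq N$ and $\mathcal M_{\mathrm{Part}}\subseteq L(N)$.

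Now suppose $\mathcal M_{\mathrm{Part}}=L(N')$ for some $N'\trianglelefteq G$; then every Fourier-support element of every $y\in\mathcal C$ lies in $N'$. From $E_e^{\{i,j\}}=\tfrac12(\id+u_{e_i+e_j})$ we get $e_i+e_j\in N'$ for all $i,j$, so $N'$ contains the even-weight subgroup of $\mathbb Z_2^\infty$; for an even permutation $s$, choosing $\mathcal K$ so that $s|_K$ is even on every block (pair up the even-length cycles) gives $u_0\in\suppe(E_s^{\mathcal K})$, hence $s\in N'$ and $\mathrm{Alt}_\infty\subseteq N'$; and $u_{(12)}E_{(12)}^{\{1,2\}}=\tfrac12(u_{(12)}u_{e_1}+u_{(12)}u_{e_2})$ forces the sign-odd element $u_{(12)}u_{e_1}$ into $N'$. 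A direct check shows that these generate $N$ (the even-weight vectors and $\mathrm{Alt}_\infty$ generate the sign-even part of $N$, and left/right multiplication by these carries $u_{(12)}u_{e_1}$ onto the whole sign-odd part), so $N'=N$. Finally we contradict $\mathcal M_{\mathrm{Part}}=L(N)$ by producing $0\neq x:=u_{(12)}(u_{e_1}-u_{e_2})\in L(N)$ orthogonal to $\mathcal M_{\mathrm{Part}}$: for $y=u_sE_s^{\mathcal L}\in\mathcal C$ with $s\neq(12)$, $x^*y$ is supported on $u_{(12)s}\mathbb Z_2^\infty$ with $(12)s\neq e$, so $\tau(x^*y)=0$; and for $s=(12)$, $E_{(12)}^{\mathcal L}$ is fixed by the coordinate transposition $1\leftrightarrow 2$ (which preserves $\mathcal L$), so its Fourier coefficients at $e_1$ and $e_2$ agree and the coefficient of $u_e$ in $x^*y=(u_{e_1}-u_{e_2})E_{(12)}^{\mathcal L}$ vanishes. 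Hence $\tau(x^*\,\cdot\,)$ kills $\mathrm{Lin}(\mathcal C)$ and, by weak-$*$ continuity, all of $\mathcal M_{\mathrm{Part}}$, so $x\notin\mathcal M_{\mathrm{Part}}=L(N)$ — a contradiction. The delicate part is expected to be precisely this last block (c): the generation argument identifying $N$ as the only candidate, together with the orthogonality check for the explicit witness $x$; Steps (a) and (b) reduce to bookkeeping with the sign cocycle once the identities above are in hand.
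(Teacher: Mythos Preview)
Your proof is correct for the main case $P_1\notin\{0,\id\}$, and steps (a) and (b) parallel the paper closely: you verify that $\mathcal C$ is unital, self-adjoint and multiplicatively closed via the sign identity on the join partition, and that it is stable under conjugation by $S_\infty$ and by $\mathbb Z_2^\infty$.

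Step (c), however, takes a genuinely different route. The paper first appeals to its earlier structural result $E((12))=(12)P$ (proved via the $\fpc$ machinery), computes $P=0$ by an orthogonality argument, and then claims that the Fourier support of $(12)\bigl(P_1^{\{1,2\}}-P_2^{\{1,2\}}\bigr)$ forces $(12)\in N'$, whence $N'=G$, contradicting $P=0$. You instead introduce the index-two parity kernel $N=\ker\psi$, prove $\mathcal M_{\mathrm{Part}}\subseteq L(N)$, show that any candidate $N'$ must coincide with $N$ by reading off generators of $N$ from Fourier supports of specific elements of $\mathcal C$, and then exhibit the explicit witness $x=u_{(12)}(u_{e_1}-u_{e_2})\in L(N)$ orthogonal to all of $\mathcal C$. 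Your approach is more self-contained, not relying on the earlier proposition about $E((12))$, and it is also more robust: for a minimal projection $P_1$ one has $P_1^{\{1,2\}}-P_2^{\{1,2\}}=\tfrac12(u_{e_1}+u_{e_2})$, whose Fourier support is $\{e_1,e_2\}$ and does \emph{not} contain the identity, so the paper's assertion that $(12)$ itself lies in $N'$ is not justified as written. Your argument sidesteps this by identifying the correct obstruction $N=\ker\psi$ rather than~$G$.

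One small slip: the edge case $P_1=\id$ is mishandled. You correctly obtain $\mathcal M_{\mathrm{Part}}=L(S_\infty)$, but your parenthetical computation (conjugating $u_s$ by $u_v$ lands outside $L(S_\infty)$) shows precisely that $L(S_\infty)$ is \emph{not} $G$-invariant, so it cannot be an ``exotic invariant'' subalgebra at all. In fact the proposition as literally stated fails for $P_1=\id$; you should simply note this and restrict to the non-trivial projections, which is clearly the intended case.
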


 \begin{proof} 
 	Fix $s\in S_\infty,\mathcal K\in\mathrm{Part}, s(\mathcal K)=\mathcal K$. First, notice that for any $g\in S_\infty$ one has $$g\cdot s\cdot\prod\limits_{K\in\mathcal K}(P_1^K+\sign(s|_K)\cdot P_2^K)\cdot g^{-1}=gsg^{-1}\cdot \prod\limits_{K\in\mathcal K}(P_1^{g(K)}+\sign(s|_K)\cdot P_2^{g(K)})\in\mathcal C,$$ since $gsg^{-1}(g(K))=gs(K)=g(K)$ and $\sign(s|_{K})=\sign(gsg^{-1}|_{g(K)}$ for any $K\in\mathcal K$. It follows that $\mathcal C$ is $S_\infty$-invariant.

Now, take any $z\in\mathbb Z_2$ and $i\in\mathbb N$. Observe that 
 \begin{equation}\label{eq: P_lz}
 	P_lz=zP_l=\omega(z)P_l,\;\;l=1,2,
 \end{equation}
 where $\omega(z)=1$ if $z=e$ and $\omega(z)=-1$ if $z\neq e$. Let $K_0\in\mathcal K$ be the element of the partition $\mathcal K$ containing $i$. Then one has:
 \begin{align*}
 	z^{(i)}\cdot s\cdot\prod\limits_{K\in\mathcal K}(P_1^K+\sign(s|_K)\cdot P_2^K)\cdot (z^{-1})^{(i)}&=
 	\\  s \cdot z^{(s^{-1}(i))}\cdot (\omega_1(z^{-1})P_1^{K_0}+\sign(s|_{K_0})&\omega_2(z^{-1})P_2^{K_0})\cdot \prod\limits_{K\in\mathcal K\setminus\{K_0\}}(P_1^K+\sign(s|_K)\cdot P_2^K) =\\&s\cdot\prod\limits_{K\in\mathcal K}(P_1^K+\sign(s|_K)\cdot P_2^K),
 \end{align*} since $s^{-1}(i)\in K_0$ and $\omega(z^{-1})=\omega(z)$. Therefore, $\mathcal C$ is $Z_2^\infty$-invariant. 
 
 Let us show that $\overline{\Lin(\mathcal C)}^{w^*}$ is an algebra. Consider two elements of $\mathcal C$ defined by $s\in S_\infty$, $\mathcal K\in\Part$ and $g\in S_\infty$, $\mathcal J\in\Part$, correspondingly. It is sufficient to show that their product belongs to $\mathcal C$. One has:
 \begin{align}\label{eq: product in mathcal C}\begin{split}s\cdot\prod\limits_{K\in\mathcal K}(P_1^K+\sign(s|_K)\cdot P_2^K)\cdot g\cdot\prod\limits_{J\in\mathcal J}(P_1^J+\sign(g|_J)\cdot P_2^J)=\\ sg\cdot\prod\limits_{K\in\mathcal K}(P_1^{g^{-1}(K)}+\sign(s|_K)\cdot P_2^{g^{-1}(K)})\cdot \prod\limits_{J\in\mathcal J}(P_1^J+\sign(g|_J)\cdot P_2^J).\end{split}
 \end{align} Consider the order $\prec$ on $\Part$, where $\mathcal K_1\prec\mathcal K_2$ iff $\mathcal K_1$ refines $\mathcal K_2$, \ie for any $K\in\mathcal K_1$ there exists $J\in\mathcal K_2$ such that $K\subset J$. Let $\mathcal I$ be the minimal partition with respect to $\prec$ such that $\mathcal K\prec \mathcal I$ and $\mathcal J\prec\mathcal I$. Then $s(I)=I$ and $g(I)=I$, and therefore $sg(I)=I$  for every $I\in\mathcal I$.
 
 Further, since $g(J)=J$ for every $J\in\mathcal J$ we have:
 \begin{align*}\prod\limits_{K\in\mathcal K}(P_1^{g^{-1}(K)}+\sign(s|_K)\cdot P_2^{g^{-1}(K)})\cdot \prod\limits_{J\in\mathcal J}(P_1^J+\sign(g|_J)\cdot P_2^J)=\\g^{-1}\cdot\prod\limits_{K\in\mathcal K}(P_1^{K}+\sign(s|_K)\cdot P_2^{K})\cdot \prod\limits_{J\in\mathcal J}(P_1^J+\sign(g|_J)\cdot P_2^J)\cdot g=\\
 	g^{-1}\cdot\prod\limits_{I\in\mathcal I}(P_1^I+\sign(sg|_I)\cdot P_2^I)\cdot g=\prod\limits_{I\in\mathcal I}(P_1^I+\sign(sg|_I)\cdot P_2^I).\end{align*} Combining this with \eqref{eq: product in mathcal C} we obtain that $\mathcal C\cdot\mathcal C=\mathcal C$, which implies that $\overline{\Lin(\mathcal C)}^{w^*}$ is a von Neumann subalgebra of $L(G)$.

 Now, let $E:L(G)\to \overline{\Lin(\mathcal C)}^{w^*}$ be the $\tau$-preserving conditional expectation. By Proposition \ref{prop: E(12)}, one has $E((12))=(12)P$ for some self-adjoint element $P\in L(\mathbb Z_2^2)$. 
From the definition of $\mathcal C$ it follows that for any $A\in \Lin\{P_i^{(1)}P_j^{(2)}:(i,j)\notin\{(1,1),(2,2)\}\}$ one has $\tau((12)A\cdot\mathcal C)=\{0\}$. Moreover, $\tau((12)(P_1^{\{1,2\}}+ P_2^{\{1,2\}})\mathcal C)=\{0\}$ and
therefore, arguing as in the last proposition,  we get $P=0$.

Suppose now that $\overline{\Lin(\mathcal C)}^{w^*}$ is of the form $L(N)$ for some normal subgroup $N$ of $G$. Since $\mathcal C$ contains $(12)\cdot (P_1^{\{1,2\}}-P_2^{\{1,2\}})$, the subgroup $N$ would contain all elements from the Fourier expansion of $(12)\cdot (P_1^{\{1,2\}}-P_2^{\{1,2\}})$, \ie $(12)\cdot z$ for each $z\in \Z_2^2$. In particular, we would have $(12)\in N$. By normality of $N$, this would imply that $N=G$, which cannot hold by the last paragraph.
 \end{proof}
 
 Observe that the algebra $\mathcal{M}_{Part}$ from Proposition \ref{prop: example P not projection} is not a factor, since for any finite subset $K\subseteq\mathbb N$ the operator $P_1^K+P_2^K$ belongs to its center. 
 
 \begin{question}
 	Does $L(G)$ admit an invariant subalgebra for which $P=(12)E((12))$ is not a projection?
 \end{question}

\begin{remark}
Most of the results of this section remain true almost verbatim for the group $S_\infty \ltimes \Z_m^\infty$ with $m \in \N$, $m >2$. In particular, a construction of $\mathcal M_{Part}$ yields in that case an example of an invariant subalgebra with the corresponding operator $P$ not being a projection.
\end{remark}

\subsection*{Acknowledgements} T.A.\ would like to thank Simeng Wang for his invitation to visit the Institute of Advanced Studies in Mathematics, Harbin Institute of Technology, in May, during which time a part of this work was done. 
A.D.\ acknowledges the funding by the Long-term program of support of
the Ukrainian research teams at the Polish Academy of Sciences carried out in collaboration
with the U.S.\ National Academy of Sciences with the financial support of external partners.
Y.J.\ is partially supported by National Natural Science Foundation of China (Grant
No. 12471118). 

\bibliography{name}
\bibliographystyle{amsalpha}

\end{document}